\DeclareFontFamily{OT1}{rsfs}{}
\DeclareFontShape{OT1}{rsfs}{n}{it}{<-> rsfs10}{}
\DeclareMathAlphabet{\mathscr}{OT1}{rsfs}{n}{it}
\newtheorem{theorem}{Theorem}
\newtheorem{corollary}[theorem]{Corollary}
\newtheorem{lemma}[theorem]{Lemma}
\newtheorem{proposition}[theorem]{Proposition}
\newtheorem{remark}[theorem]{Remark}
\newenvironment{proof}{\noindent {\bf Proof:}}{$\Box$ \vspace{2 ex}}
\DeclareMathOperator{\Aut}{Aut}
\DeclareMathOperator{\SL}{SL}
\DeclareMathOperator{\GL}{GL}
\DeclareMathOperator{\Sel}{Sel}
\def\Z{{\mathbb Z}}
\def\sf{{\rm Sel}^{(2)}_{\rm fake}(C/\Q)}
\def\SL{{\rm SL}}
\def\GL{{\rm GL}}
\def\Stab{{\rm Stab}}
\def\Sym{{\rm Sym}}
\def\Jac{{\rm Jac}}
\def\P{{\mathbb P}}
\def\Disc{{\rm Disc}}
\def\ss{{\rm ss}}
\def\Aut{{\rm Aut}}
\def\red{{\rm red}}
\def\Det{{\rm Det}}
\def\Vol{{\rm Vol}}
\def\R{{\mathbb R}}
\def\F{{\mathbb F}}
\def\FF{{\mathcal F}}
\def\RR{{\mathcal R}}
\def\Q{{\mathbb Q}}
\def\H{{\mathcal H}}
\def\J{{\mathcal J}}
\def\C{{\mathcal C}}
\def\Z{{\mathbb Z}}
\def\P{{\mathbb P}}
\def\F{{\mathbb F}}
\def\Q{{\mathbb Q}}
\def\C{{\mathbb C}}
\def\H{{\mathcal H}}
\def\BB{{\mathcal B}}
\title{Most hyperelliptic curves over $\Q$ have no rational points\footnote{Or, as suggested by Don Zagier: ``Most hyperelliptic curves are pointless".}}
\author{Manjul Bhargava}
\begin{document}

\maketitle


\section{Introduction}

The purpose of this article is to show that most hyperelliptic curves
over $\Q$ have no rational points.  By a hyperelliptic curve over
$\Q$, we mean here a smooth, geometrically irreducible, complete curve $C$ over $\Q$ equipped with a fixed
map of degree 2 to $\P^1$ defined over $\Q$.
Thus any hyperelliptic curve $C$ over $\Q$ of genus~$g$ can be 
embedded in weighted projective space $\P(1,1,g+1)$ and 
expressed by an equation of the form
\begin{equation}\label{hypereq}
C: z^2 = f(x,y) = f_0x^n+f_1x^{n-1}y+\cdots+f_ny^n
\end{equation}
where $n=2g+2$, the coefficients $f_i$ lie in $\Z$, and $f$ factors into distinct linear factors over $\bar\Q$.  
Define the height $H(C)$ of~$C$~by
$$ H(C):= H(f):=\max\{|f_i|\}.$$
Then we prove:

\begin{theorem}\label{main}
  As $g\to\infty$, a density approaching $100\%$ of hyperelliptic
  curves over $\Q$ of genus $g$ possess no rational points.
\end{theorem}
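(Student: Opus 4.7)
The natural approach is $2$-descent combined with geometry of numbers, in the style of Bhargava's theorems on Selmer averages. To each $C$ one attaches its (fake) $2$-Selmer set $\sf \subseteq H^1(\Q,J[2])$, where $J=\Jac(C)$, cut out by local conditions arising from descent on $C$ itself rather than on $J$. By construction, the descent map gives $C(\Q) \to \sf$, so $\sf = \emptyset$ implies $C(\Q) = \emptyset$. It therefore suffices, by Markov's inequality, to prove that the average $\mathrm{Avg}_{H(C)\leq X}|\sf|$ tends to $0$ as first $X \to \infty$ and then $g\to\infty$.

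To compute this average, the plan is to parametrize elements of $\sf$ by $G_n(\Z)$-equivalence classes of integral points in a representation $V_n$ whose polynomial invariants recover the coefficients of the binary form $f$ defining $C$, with generic stabilizer a quotient of $J[2]$. The natural candidate is pencils of quadrics in $n=2g+2$ variables (an action of $\SO_n$ or $\SL_n$ on pairs of symmetric bilinear forms), extending the odd-degree/Weierstrass-point framework of Bhargava--Gross to the even-degree setting at hand. One then applies the standard averaging/cutting-the-cusp technique to count such orbits of bounded invariants in terms of local volumes and the global volume of a fundamental domain for $G_n(\Z)\backslash V_n(\R)$, and divides by the total number of binary forms with $|f_i|\leq X$.

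The key qualitative feature required is that the resulting ratio decays with $g$. This should follow from a volume computation in $V_n$: as the dimension grows with $g$, the ``soluble'' portion of the fundamental domain occupies an ever smaller share of the integer points of bounded invariants, yielding an explicit decay. A Markov argument then converts this average bound into the desired density statement.

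The principal obstacle is the cuspidal analysis. Near the boundary of the fundamental domain for $G_n(\Z)\backslash V_n(\R)$ integer points accumulate, and one must separate reducible orbits (which correspond to the trivial class in $\sf$ and can be discarded) from genuinely irreducible ones, and bound the latter by a quantity that is $o$(the number of curves), uniformly in $g$. A secondary technicality is sieving by local conditions at all primes in a manner uniform in $g$, so that the estimate survives restriction to the locally soluble curves — precisely the ones whose Hasse-principle failures make the theorem nonvacuous.
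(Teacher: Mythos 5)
Your proposal matches the paper's approach in all essential respects: the paper uses the representation $\Z^2\otimes\Sym_2\Z^n$ of $\GL_n(\Z)=\SL_n^\pm(\Z)$ (pairs of symmetric $n\times n$ matrices, i.e.\ pencils of quadrics), attaches to each rational point---and more generally to each fake $2$-Selmer element---an integral $\GL_n(\Z)$-orbit whose invariant binary $n$-ic form is $f$, counts orbits of bounded height by averaging over fundamental domains and cutting the cusp in the style of the quintic and 2-Selmer papers, and then sieves by local conditions to bound the density of forms $f$ admitting such an orbit; Markov's inequality converts the resulting $o(2^{-g})$ bound on the average of $|\sf|$ into the density statement of the theorem.

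One small clarification of emphasis: the group is $\SL_n^\pm$ (not $\SO_n$, which would fix one of the two quadrics), and the exponential decay in $g$ does \emph{not} come from the fundamental domain or its ``soluble portion'' shrinking---the total number of integral orbits of height $<X$ remains $\asymp X^{n+1}$, comparable to the number of curves, for every $g$. Rather, the decay is produced by the sieve: at each place only a small fraction of the $G(\Q_\nu)$- (resp.\ $G(\R)$-) orbits with a given invariant form can arise from a $\Q_\nu$-point via Cassels' $x-T$ map, and the product of these local densities is $o(2^{-g})$. The dominant contribution is from $p=2$, where the density of ``soluble'' orbits is $O(2^{-n})$ because the $2$-adic stabilizer (of size $\geq 2^{n+m}$) far exceeds the number of square classes $|(\Z/8\Z)^\times/(\Z/8\Z)^{\times 2}|\cdot|\P^1(\Z/8\Z)|$ available from points on $C(\Q_2)$. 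The structural reason this works, which the paper highlights and your outline does not quite name, is that this coregular representation has no Kostant section over $\Z$ (or $\Q$): there is no canonical orbit attached to a given binary form, so a form need not have any orbit at all, and this is what opens the door to proving nonexistence of rational points rather than merely bounding a Selmer average.
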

More precisely, let $\rho_g$ denote the lower density of
hyperelliptic curves over $\Q$ of genus $g$, when ordered by height,
that have no rational points.  \,Then \,$\rho_g\to1$ as $g\to\infty$.
In fact, we prove that 
$\rho_g=1-o(2^{-g})$, 
so the convergence to 1 is quite rapid.
Theorem~1 may thus be viewed as a ``strong asymptotic form of Faltings' Theorem'' for hyperelliptic~curves over~$\Q$.

Since, by the work of Poonen and Stoll~\cite{PS}, for each $g\geq 1$ a density of at least 
$80\%$ of hyperelliptic curves of over $\Q$ of genus $g$ are {\it locally soluble}---i.e., have a point locally at every place of $\Q$---we obtain:

\begin{corollary}\label{hasse}
As $g\to\infty$, a density approaching $100\%$ of locally soluble hyperelliptic curves over $\Q$ of genus $g$ fail the 
Hasse principle.
\end{corollary}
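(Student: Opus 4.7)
The plan is to deduce Corollary~\ref{hasse} from Theorem~\ref{main} by a short ratio estimate that combines the quantitative form of the theorem with the Poonen--Stoll local solubility bound. For each genus $g$, let $N_g(H)$, $N_g^{\rm ls}(H)$, and $N_g^{\rm rat}(H)$ denote respectively the number of hyperelliptic curves over $\Q$ of genus $g$ and height at most $H$, the number of those that are locally soluble, and the number of those that have a rational point. Since any curve possessing a rational point is automatically locally soluble, $N_g^{\rm rat}(H) \leq N_g^{\rm ls}(H) \leq N_g(H)$, so the density of locally soluble curves of genus $g$ that fail the Hasse principle is
$$ 1 - \limsup_{H\to\infty} \frac{N_g^{\rm rat}(H)}{N_g^{\rm ls}(H)}. $$

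I would bound the numerator using the sharpened form of Theorem~\ref{main} stated in the paragraph after the theorem: since $\rho_g = 1 - o(2^{-g})$,
$$ \limsup_{H\to\infty} \frac{N_g^{\rm rat}(H)}{N_g(H)} \;\leq\; 1-\rho_g \;=\; o(2^{-g}). $$
For the denominator I would invoke the Poonen--Stoll result cited just before the corollary, which gives $\liminf_{H\to\infty} N_g^{\rm ls}(H)/N_g(H) \geq 0.8$ for every $g\geq 1$. Dividing the two estimates yields
$$ \limsup_{H\to\infty} \frac{N_g^{\rm rat}(H)}{N_g^{\rm ls}(H)} \;=\; \limsup_{H\to\infty} \frac{N_g^{\rm rat}(H)/N_g(H)}{N_g^{\rm ls}(H)/N_g(H)} \;\leq\; \frac{1-\rho_g}{0.8} \;=\; o(2^{-g}), $$
which tends to $0$ as $g\to\infty$. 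Hence the fraction of locally soluble hyperelliptic curves of genus $g$ that fail the Hasse principle tends to $1$, as required.

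There is no substantive obstacle here: the corollary is essentially a formal consequence of the quantitative form of Theorem~\ref{main} together with a uniform-in-$g$, positive lower bound on local solubility. The only point worth verifying is that the Poonen--Stoll bound is genuinely uniform in $g$ with a constant bounded away from zero (indeed they give the explicit value $0.8$ valid for every $g\geq 1$); this is precisely what prevents the denominator from collapsing faster than the numerator shrinks and allows the quotient to vanish in the limit. As a byproduct, the explicit decay rate $o(2^{-g})$ from Theorem~\ref{main} is inherited by Corollary~\ref{hasse}.
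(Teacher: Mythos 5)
Your argument is correct and is essentially the paper's own deduction: the paper likewise cites Poonen--Stoll for the uniform lower bound of $80\%$ on local solubility and combines it with the quantitative estimate $\rho_g = 1-o(2^{-g})$ from Theorem~\ref{main} to conclude that the lower density $\rho_g'$ of locally soluble genus-$g$ curves failing the Hasse principle is also $1-o(2^{-g})$. Your explicit bookkeeping with $N_g$, $N_g^{\rm ls}$, $N_g^{\rm rat}$ and the division of $\limsup$ by $\liminf$ just makes the paper's one-line inference precise.
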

Indeed, we obtain the same estimate of $1-o(2^{-g})$ on the lower density
$\rho_g'$ of locally soluble hyperelliptic curves over~$\Q$ of genus $g$ that fail the
Hasse principle. 

Our strategy to prove Theorem~1 
is to relate rational points on hyperelliptic curves to elements of the representation 
$\Z^2\otimes\Sym_2\Z^n$ of $\GL_n(\Z)$.
We view an element $v$ of $\Z^2\otimes\Sym_2\Z^n$ as a pair $(A,B)$ of
symmetric $n\times n$ matrices with entries in~$\Z$.  Then an element
$g\in \GL_n (\Z)$ acts on $(A,B)$ by the formula $g\cdot(A, B) =
(gAg^t , gBg^t)$.  To such a pair $v=(A,B)$, we may associate a binary
form $f_v$ of degree $n$, given by
\begin{equation}\label{invform} 
f_v(x,y) = (-1)^{n/2}\det(Ax-By).
\end{equation}
The coefficients of $f_v$ in fact generate the ring of polynomial
invariants for the action of $\GL_n(\Z)$ 
on $\Z^2\otimes\Sym_2\Z^n$
(see, e.g., the work of Schwarz~\cite{Schwarz}).  We thus call $f_v$
the {\it invariant binary $n$-ic form} associated to $v\in
\Z^2\otimes\Sym_2\Z^n$.

The orbits
of $\GL_n(\Z)$ on $\Z^2\otimes\Sym_2\Z^n$ were first considered in the
case $n=2$ by Hardy and Williams~\cite{HW} and more generally by
Morales~\cite{Morales1,Morales}.  A classification of the orbits in the cases
$n=2$ and $n=3$, in terms of ideal classes in quadratic and cubic
rings, was given in \cite{hclI} and \cite{hclII}, while a complete
classification for general $n$ in terms of module classes of
rings of rank $n$ was given by Wood~\cite{Wood1}.

The key algebraic construction that allows us to prove
Theorem~\ref{main} and Corollary~\ref{hasse}
is the utilization of a rational
point on a hyperelliptic curve (\ref{hypereq}) to produce an element
$v\in \Z^2\otimes\Sym_2\Z^n$ such that $f_v=f$. That is, a rational
point on the hyperelliptic curve $z^2=f(x,y)$ allows us to construct an
integral orbit for the action of $\GL_n(\Z)$ on
$\Z^2\otimes\Sym_2\Z^n$ whose invariant binary $n$-ic form is equal to~$f$!  On the other hand, we show by geometry-of-numbers arguments
that, for most integral binary $n$-ic forms $f$, there do not exist
any such integral orbits with invariant binary form equal to
$f$. These two facts together will then yield Theorem~\ref{main} (and
hence also Corollary~\ref{hasse}).

The construction in fact allows us to study not just rational points on the curves $C$, but also elements of what is called the ``fake 2-Selmer set'' of $C$, in the terminology of Bruin and Stoll~\cite{BruinStoll}; this set also has its origins in the works of Cassels~\cite{Cassels2} and Poonen and Schaefer~\cite{PSc}.   The {\it fake 2-Selmer set} $\sf$ of $C:z^2=f(x,y)$ is a certain finite subset of $K^\times/(K^{\times2}\Q^\times)$, where $K$ is the $\Q$-algebra defined by $\Q[x]/(f(x,1))$ if $f_0\neq 0$ (see~\cite[\S2]{BruinStoll} or \S\ref{mainproof} for more precise definitions).  Geometrically, the fake 2-Selmer set corresponds 
to a set of equivalence classes of two-covers of $C$,
which have points everywhere locally, such that each rational point of $C$ lifts to a rational point on at least one of these two-covers~\cite[\S3]{BruinStoll}.  In particular, the emptiness of $\sf$ guarantees the nonexistence of points on $C$. 

To get a handle also on the fake 2-Selmer set, we show that not just rational points on $C:z^2=f(x,y)$ but also general elements of the fake 2-Selmer set can be used to construct an orbit of $\GL_n(\Z)$ on $\Z^2\otimes\Sym_2\Z^n$ having invariant form $f$.  Applying the above-described counting methods, we are then able to determine an upper bound on the average size of the fake 2-Selmer set of hyperelliptic curves over $\Q$ of genus $g$; this upper bound decays exponentially in $g$.  Specifically, we prove:

\begin{theorem}\label{fake}
The $($limsup of the$)$ average size of the fake 2-Selmer set of hyperelliptic curves over $\Q$ of genus $g$ is $o(2^{-g})$.
\end{theorem}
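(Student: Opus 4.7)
The plan follows the strategy outlined in the introduction: attach to each fake 2-Selmer class an integral $\GL_n(\Z)$-orbit on $V=\Z^2\otimes\Sym_2\Z^n$ with prescribed invariant form, and then bound the number of such orbits on average via geometry of numbers. Fix $f$ of degree $n=2g+2$ with nonzero discriminant and $f_0\neq 0$, and let $K=\Q[x]/(f(x,1))$ with $\theta$ the image of $x$.

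First, I would associate to a class $\delta\in K^\times/(K^{\times 2}\Q^\times)$ a pair $(A_\delta,B_\delta)$ of symmetric $n\times n$ matrices via the $\delta$-twisted trace pairings $(a,b)\mapsto\Trace_{K/\Q}(\delta\,ab)$ and $(a,b)\mapsto\Trace_{K/\Q}(\delta\,\theta\,ab)$ on $K$, evaluated in a suitable basis and normalized so that $(-1)^{n/2}\det(A_\delta x-B_\delta y)=f(x,y)$, in the spirit of Wood's parametrization~\cite{Wood1}. When $\delta$ lies in $\sf$, the local solubility conditions at each place of $\Q$ furnish a $\Z$-lattice in $K$ on which these forms become integer-valued; this yields a genuine element $v_\delta\in V(\Z)$. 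The resulting map $\sf\to\GL_n(\Z)\backslash V(\Z)$ has fibers of uniformly bounded size, with the ambiguity essentially controlled by $\Jac(C)[2](\Q)$.

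Second, I would count $\GL_n(\Z)$-orbits on nondegenerate $v\in V(\Z)$ with $H(f_v)\leq X$ by the Bhargava averaging trick: decompose $V_\R$ according to the real $\GL_n(\R)$-orbits over a generic form, pick a Borel fundamental set $\FF_i\subset\GL_n(\R)$ for each real component, and write the orbit count as an integral of the counting function of $V(\Z)$ over a compact translate of $\bigsqcup_i\FF_i$. The total main-term volume is of order $X^{n+1}$, matching the order of magnitude of the count of binary $n$-ic forms of height at most $X$. The exponential savings in $g$ come from the archimedean picture: above a form $f$ with $2g+2$ real roots, the fiber $\{v\in V_\R:f_v=f\}$ splits into $\asymp 2^g$ distinct $\GL_n(\R)$-orbits, but only $O(1)$ of these are ``distinguished'' in the sense of being able to support an integral orbit arising from a fake 2-Selmer class; the remaining real components correspond to nontrivial local torsors with no Selmer origin. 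Restricting the count to the distinguished real components and applying local conditions at the finite places through an Ekedahl-style sieve then yields the average bound $o(2^{-g})$ on integral orbits per $f$.

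The main obstacle is controlling the cuspidal region of $\FF_i$, where reducible orbits accumulate and could a priori dominate the main term. I would stratify the cusp by parabolic subgroups of $\GL_n$ and show that each integer orbit in a cuspidal stratum either has $\disc(f_v)=0$ (a subvariety of strictly smaller dimension in the space of forms) or forces $f$ to factor nontrivially over $\Q$; in both cases the contribution is $o(X^{n+1})$ after averaging over $f$. Combining this cuspidal cutoff with the distinguished-component restriction, the finite-place sieve, and the bounded-fiber property of $\delta\mapsto v_\delta$ then gives the claimed $o(2^{-g})$ upper bound on the average size of $\sf$.
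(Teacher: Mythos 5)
Your overall plan matches the paper's: build integral orbits from 2-Selmer classes, count integral orbits of bounded height by averaging over Borel fundamental domains, and sieve. But two of your central steps are incorrect as stated, and one hides a genuine technical issue that the paper resolves by a quite specific argument.

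First, the passage from a fake 2-Selmer class to an \emph{integral} orbit is not just a matter of clearing denominators in a twisted trace form. Given $[\alpha]\in\sf$, one gets a whole $\Q^\times$-torsor of rational orbits (one for each representative $r\alpha$, $r\in\Q^\times$), and one has to produce at least one with a representative in $V(\Z)$. The paper does this in two moves: (i) at each prime $p$, local solubility guarantees that the $G(\Q_p)$-orbit attached to $\alpha$ or to $p\alpha$ (but perhaps not both) meets $V(\Z_p)$, because $\Q_p^\times/(\Q_p^{\times2}\Z_p^\times)$ is generated by $p$; assembling these adjustments gives a single $r\in\Q^\times$ so that the orbit of $r\alpha$ is $\Z_p$-soluble for every $p$; and then (ii) one invokes the fact that $\SL_n^\pm$ has class number one to upgrade everywhere-local integrality to a global integral representative. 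Your phrase ``the local solubility conditions furnish a $\Z$-lattice on which these forms become integer-valued'' skips both the local $p$-power renormalization and the class-number-one patching; without them the construction does not yield a point of $V(\Z)$.

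Second, the claimed source of the exponential decay is backwards. Over a form with $2m$ real roots the fiber in $V(\R)$ splits into $\lceil 2^{2m-1}\rceil$ orbits (for $m=n/2$, that is $2^{n-1}=2^{2g+1}$, not $\asymp 2^g$), of which exactly $\max\{1,2m\}$---not $O(1)$---lie in $\mathcal P_\infty$. More importantly, a random $f$ rarely has many real roots, so the archimedean density has to be weighted by $\mu(I(m))$, and after carrying out the volume computation the archimedean factor in the paper's density bound~(\ref{finaldensity}) is $O(2^{n/2})$: it \emph{grows} exponentially. The exponential decay $O(2^{-g})$ comes from the 2-adic factor, which is $O(2^{-n})$ (this reflects the Jacobian normalization $|\mathcal J|_2=2^{-n}$ and the comparison between the $2^{n+m}$-sized 2-torsion of $R_f^\times$ over $\Z_2$ and the $O(1)$ residue classes mod 8 taking square values); the product $2^{n/2}\cdot 2^{-n}$ gives $2^{-g-1}$. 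Finally, sharpening $O(2^{-g})$ to $o(2^{-g})$ is not automatic: it uses the theorem of Dembo--Poonen--Shao--Zeitouni on the scarcity of real zeros of random polynomials and Car's results on the number of irreducible factors of polynomials over $\F_p$. None of this appears in your sketch, and the account you give of where the $2^{-g}$ comes from would not produce it.

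The cuspidal discussion is in the right spirit (and the paper does prove exactly the sort of reducibility statement you describe, via Lemma~\ref{lem1} and Proposition~\ref{hard}), but the two gaps above mean the proposal as written does not establish the theorem.
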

This result thus allows us to pinpoint the obstruction to having rational points on the curves occurring in Theorem~\ref{main}.  Indeed, the geometric interpretation of the fake 2-Selmer set of $C$ means that if $\sf$ is empty, then there are no locally soluble 2-coverings of $C$.
As shown by Skorobogatov~\cite{Sk}, using the descent theory of Colliot-Th\'el\`ene and Sansuc~\cite{CS}, such a ``2-covering obstruction'' yields
a case of the Brauer--Manin obstruction.  We therefore obtain: 

\begin{corollary}\label{brauer}
As $g\to\infty$, a density approaching $100\%$ of hyperelliptic curves over $\Q$ of genus $g$ 
have empty Brauer set, i.e., have a 
Brauer--Manin obstruction to having a rational point.
\end{corollary}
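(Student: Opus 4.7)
The plan is to deduce Corollary~\ref{brauer} directly from Theorem~\ref{fake}, together with the results of Skorobogatov~\cite{Sk} and Colliot-Th\'el\`ene--Sansuc~\cite{CS} already cited in the introduction. The whole argument should occupy only a few lines once Theorem~\ref{fake} is in hand.

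First I would observe that $|\sf|$ is a nonnegative integer-valued function on the set of hyperelliptic curves of genus $g$, so by Markov's inequality the upper density of curves $C$ with $\sf\neq\emptyset$ is bounded by the limsup of the average value of $|\sf|$. Theorem~\ref{fake} bounds this limsup by $o(2^{-g})$, so a density at least $1-o(2^{-g})$ of hyperelliptic curves of genus $g$ satisfy $\sf=\emptyset$.

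Second, for each such curve I would invoke the geometric interpretation of the fake 2-Selmer set recalled in the paragraph preceding the corollary: emptiness of $\sf$ implies that $C$ admits no everywhere locally soluble 2-covering. By Skorobogatov's theorem~\cite{Sk}, based on the descent theory of Colliot-Th\'el\`ene--Sansuc~\cite{CS}, this ``2-covering obstruction'' is a case of the Brauer-Manin obstruction, so the Brauer set of $C$ is empty. Curves that are not everywhere locally soluble already have empty adelic point set, hence automatically empty Brauer set, so they only reinforce the estimate rather than needing separate treatment.

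I do not anticipate any serious obstacle: the substantive input is the exponential decay of the average fake 2-Selmer set from Theorem~\ref{fake}, and the identification of the 2-covering obstruction with a Brauer-Manin obstruction is taken off the shelf from~\cite{Sk} and~\cite{CS}. The only point requiring even routine care is to track that the $1-o(2^{-g})$ density rate in the corollary is inherited cleanly from the $o(2^{-g})$ bound on the average size of $\sf$ via the Markov step above.
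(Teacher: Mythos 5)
Your argument is essentially the paper's own proof: Theorem~\ref{fake} bounds the average of the nonnegative integer $|\sf|$ by $o(2^{-g})$, so a $1-o(2^{-g})$ fraction of curves have $\sf=\emptyset$; the geometric interpretation of $\sf$ (no locally soluble $2$-covers) combined with Skorobogatov's result~\cite{Sk} on descent obstructions then gives an empty Brauer set. The paper leaves the Markov step implicit and simply cites~\cite{BruinStoll},~\cite{Sk} (and related works of Stoll and Scharaschkin) after establishing Theorem~\ref{fake}; you spell out the same chain, including the routine observation that non-locally-soluble curves have empty adelic set and hence trivially empty Brauer set, which is correct and handled the same way.
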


Our method of proof also 
enables us to prove 
more precise statements for individual genera in Theorem~\ref{main}, Corollary~\ref{hasse}, Theorem~\ref{fake}, and Corollary~\ref{brauer}. Indeed, we prove that, for each $g\geq 1$, a~positive~proportion of locally soluble hyperelliptic curves over $\Q$ of genus $g$ have no rational points and thus fail the Hasse principle;
moreover, this failure is explained by the Brauer--Manin obstruction. 
The proportion of locally soluble hyperelliptic curves failing the Hasse principle in this way is shown to be greater than 10\% for genus~1 curves, 
and greater than 50\% for genus 2 curves;
meanwhile, once the genus is at least 10, the corresponding proportion exceeds~99\%.

We note that analogues of Theorem~1 have also been proven recently for certain special families of hyperelliptic curves, namely, those with a marked rational Weierstrass point (such curves have models of the form $z^2=f(x)$ where $f$ is an odd degree polynomial)  and those with a marked rational non-Weierstrass point (these curves have models of the form $z^2=f(x)$ where $f$ is a monic even degree polynomial).  In joint work with Gross~\cite{BG2}, we proved that the average size of the 2-Selmer group of the family of hyperelliptic curves with a marked rational Weierstrass point is 3.  By developing a beautiful refinement of the method of Chabauty, Coleman, and McCallum~\cite{mc}, Poonen and Stoll~\cite{PS2} use this result on Selmer groups to show that a positive proportion of odd degree hyperelliptic curves $z^2=f(x)$ of genus $g\geq 3$ have only one rational point, namely, the Weierstrass point at infinity; moreover, this proportion tends to 1 as $g\to\infty$. 
The corresponding positive proportion statement for $g=1$ was proven in \cite{BS2}, while the case $g=2$ remains open.

 In~\cite{SW}, Shankar and Wang develop analogues of these methods to show that the average size of the 2-Selmer group for the family of hyperelliptic curves with a marked rational non-Weierstrass point is 6, and that a positive proportion of monic even degree hyperelliptic curves $z^2=f(x)$ of genus $g\geq 10$ have only two rational points, namely the two marked rational non-Weierstrass points at infinity; again, the proportion tends to 1 as $g\to\infty$.

The methods of this paper fit into a larger context.  As in \cite{BS,BS2,BH,BG2,SW}, the problem at hand involves, in particular, counting the number of stable integral orbits of bounded height in a suitable coregular representation.  
A {\it coregular representation} is a representation for which the ring of invariants is a free polynomial ring.  As we have mentioned, the representation  $\C^2\otimes\Sym_2\C^n$ of $\SL_n(\C)$ is coregular as well, but it differs from the representations occurring in prior works in an important way, namely, it does not possess a {\it Kostant section} (in particular, it does not arise in Vinberg's theory of {\it theta groups}).  In other words, the map from our representation to the tuple of generating invariants does not have a section defined over $\Z$.  As a result, sometimes certain values of the invariants do not occur in this representation over $\Z$ (or over~$\Q$).  This is in essence what allows us to show that many curves possess {\it no} rational points.  

On the other hand, the study of such a representation without a Kostant section presents a number of new and interesting challenges, both algebraically and analytically.  For example, algebraically, the orbits with given invariants (even after imposing local conditions) do not form a group in any natural way, unlike all previous cases mentioned above.  Indeed, the relevant set of orbits having given invariants may be empty, {or} it may naturally form a torsor for a group; see \cite{AITII} for further discussion of the algebraic structure of orbits in representations of this type.  Analytically, as far as counting orbits, there are again a number of new features and difficulties that must be overcome in the geometry-of-numbers arguments to handle the cuspidal regions of the relevant fundamental domains; the main differences are caused by the fact that there is no one primary cuspidal region containing the Kostant section, which was a key feature in previous cases.  This paper represents the first time that the stable integral points of bounded height have been counted in a coregular representation having no Kostant section.  We suspect that these new counting, and related sieve, techniques may be useful in the context of other such representations,  which in turn may be useful for proving the nonexistence of other arithmetic objects.

The space $\Z^2\otimes\Sym_2\Z^n$, as we have mentioned, may be viewed as the space of pairs of $n$-ary quadratic forms $(A,B)$, and any such pair $(A,B)$ spans a pencil of quadrics in $\P^{n-1}$.  The geometric relation between pencils of quadrics and hyperelliptic curves, at least over $\C$, was studied by Reid~\cite{R}, Desale and Ramanan~\cite{R}, and Donagi~\cite{D}, while 
the scenario
 over a general field
 was studied 
 by Wang~\cite{W}.  From this point of view, it is indeed very natural to consider the representation $\Z^2\otimes\Sym_2\Z^n$ over the integers to study the arithmetic of hyperelliptic curves. In the forthcoming work~\cite{AITII}, we take this viewpoint considerably further, and study the relation between the orbits of this representation and hyperelliptic curves, over general fields and over $\Z$, in more detail.  This will allow us to extract much additional arithmetic information of interest about general hyperelliptic curves, such as the average size of the {``fake 2-Selmer group''}, points over extensions, and more.  
We will mention some of these forthcoming results in Section~\ref{mainproof}. 

This paper is organized as follows.  In Section~\ref{rptovz}, we give an explicit
construction of an integral orbit in our representation from a
rational point on a hyperelliptic curve.  The construction has a
natural interpretation in terms of the work of Wood~\cite{Wood} on rings and
ideals associated to integral binary $n$-ic forms, and we also
describe this interpretation.  In Section~\ref{fieldorbits}, we then use Wood's
results to classify orbits of the above representation over a general
base field $k$. In particular, we closely study the case where $k$ is 
the field of real numbers or is a finite field, and we determine those
orbits that can arise from a $k$-rational point on a hyperelliptic curve
via the construction of Section~\ref{rptovz}.

In Section~\ref{counting}, we then count the total number of integral orbits on
$\Z^2\otimes\Sym_2\Z^n$ having an irreducible invariant binary $n$-ic
form of bounded height.  The counting methods used build on those in
\cite{dodpf}, as well as those in our joint work with
Shankar~\cite{BS}, Ho~\cite{BH}, and Gross~\cite{BG}, although as mentioned earlier the current
scenario also has some new features in the cusps that must be dealt with.
With the latter counting result in hand, in Section~\ref{mainproof} we
then show via a sifting argument that most binary $n$-ic
forms $f$ cannot have the type of integral orbit constructed in
Section~\ref{rptovz}, thereby completing the proof of Theorem~\ref{main}.  We 
also use analogous arguments to study the fake 2-Selmer set, 
thus proving Theorem~\ref{fake}. 
Finally, in Section~\ref{examples}, 
we  work out details of some cases of small genus, allowing us to prove 
for each $g\geq 1$ that a positive proportion of locally soluble hyperelliptic curves
of genus $g$ have no rational~points.

\section{Construction of an integral orbit associated to a rational point on a hyperelliptic curve}\label{rptovz}

Let 
\begin{equation}
f(x,y) = f_0x^n+f_1x^{n-1}y+\cdots+f_ny^n
\end{equation}
be an binary $n$-ic form over $\Z$ having nonzero discriminant and
nonzero leading coefficient $f_0$, and let
$K_f:=\Q[x]/(f(x,1))=\Q[\theta]$, where $\theta$ denotes the image of
$x$ in the $\Q$-algebra $K_f$.  Then we may associate to $f$
a $\Z$-module
$R_f$ in $K_f$ having basis $\langle
1,\zeta_1,\zeta_2,\ldots,\zeta_{n-1}\rangle$, where
\begin{equation}\label{zetadef}
\zeta_k=f_0\theta^k+f_1\theta^{k-1}+\cdots+f_{k-1}\theta.
\end{equation}
It is a theorem of Birch and Merriman~\cite{BM} that
$\Disc(f)=\Disc(R_f)$, and furthermore it is a theorem of
Nakagawa~\cite[Prop.\ 1.1]{Nakagawa} that $R_f$ is actually a ring of rank $n$
over $\Z$ and thus an order in $K_f$\pagebreak; multiplication is given by
\begin{equation}\label{multtable}
\zeta_i\zeta_j = \sum_{k=j+1}^{\min(i+j,n)}f_{i+j-k}\zeta_k -  
\sum_{k=\max(i+j-n,1)}^i f_{i+j-k}\zeta_k
\end{equation} 
for $1\leq i\leq j\leq n-1$, where we set $\xi_n=-f_n$.  The
construction can also be carried out canonically for binary $n$-ic
forms having discriminant zero, and even for the zero form; see
Wood~\cite{Wood}.

We may define $\Z$-modules $I_f(k)\subset K_f$, for
$k=0,\ldots, n-1$, by
\begin{equation}
I_f(k) = \langle
1,\theta,\theta^2,\ldots,\theta^k,\zeta_{k+1},\ldots,\zeta_{n-1}
\rangle
\end{equation}
which are in fact $R_f$-modules!  They are all powers of one ideal $I_f=I_f(1)$, namely, 
$I_f(k)=I_f^k$.  We have $I_f(0)=R$, while $I_f(n-2)$
is the ``inverse different'' or the ``dualizing module''.   

Wood shows that $R_f$ is the ring of global functions on the subscheme cut out by
$f$ in $\P^1$,  and $I_f(k)=I_f^k$, as an $R_f$-module, 
consists of the sections of the pullback of $\mathcal O(k)$ from $\P^1$.
Hence if $f$ and $f'$ are equivalent forms, via an
element $\gamma \in \SL_2(\Z)$,
then $R_f$ and $R_{f'}$ are naturally identified
too via the corresponding action of $\gamma$ on $\P^1$.  The ideals $I_f(k)$ and
$I_{f'}(k)$ are {\it not} necessarily the same under this identification, however.
If  we write $f'(x,y)=f((x,y)\cdot\gamma)$, where
$\gamma=\bigl(\begin{smallmatrix}
a & b\\ c& d\end{smallmatrix}\bigr)$, then the root $\theta'$ of
$f'(x,1)$ satisfies $\theta'=\frac{d\theta-c}{-b\theta+a}$, and 
\begin{equation}\label{ifif}
I_{f'}(k) = \left(
{-b\theta+a}\right)^{-k} I_f(k).\end{equation}
The map $\delta\mapsto (-b\theta+a)^{-k}\cdot\delta$ gives an explicit isomorphism between $I_f(k)$ and $I_{f'}(k)$ as
$R_f=R_{f'}$-modules.  

Note that the ring $R_f$ and the ideals $I_f(k)$ come with natural
bases.  We thus call the $I_f(k)$ {\it based ideals} of $R_f$, i.e.,
ideals of $R_f$ of rank $n$ over $\Z$ equipped with an ordered
$\Z$-basis.  We define the {\it norm} $N(I)$ of a based ideal as the
determinant of the $\Z$-linear transformation taking the chosen basis
of $I$ to the basis of $R_f$.  Finally, if $I$ is a based ideal, then
$\kappa I$ is also a based ideal in which each basis element of $I$ is
multiplied by $\kappa$.  Thus $N(\kappa I)=N(\kappa)\cdot N(I)$ for
any based ideal $I$.

We then have the following general theorem of Wood~\cite[Thms.\ 3.1 \&
5.5]{Wood1}, phrased in the above terminology (see~\cite[Thm.\
11]{hclI} and \cite[Thm.\ 2]{hclII} for details in the cases $n=2$ and $n=3$):

\begin{theorem}[Wood]\label{wood2nn}
The elements of
$\Z^2\otimes\Z^n\otimes\Z^n$ having a given invariant binary $n$-ic form $f$ with nonzero discriminant
are in canonical bijection with equivalence classes of triples
$(I,I',\alpha)$, where $I,I'\subset K_f$ are based ideals 
of $R_f$, $\alpha\in K_f$, $II'\subseteq \alpha I_f^{n-3}$ as ideals, and
$N(I)N(I')=N(\alpha)N(I_f^{n-3})$.  $($Here, two triples
$(I,I',\alpha)$ and $(J,J',\beta)$ are {\em equivalent} if 
there exist $\kappa,\kappa'\in K_f^\times$ such that $J=\kappa I$, $J'=\kappa' I'$,
and $\beta=\kappa\kappa'\alpha.)$
\end{theorem}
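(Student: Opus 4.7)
The plan is to define explicit maps in both directions between elements $(A, B) \in \Z^2 \otimes \Z^n \otimes \Z^n$ with invariant form $f$ and equivalence classes of triples $(I, I', \alpha)$, then verify mutual inverseness, following Wood~\cite{Wood1}.

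In the forward direction, given $(A, B)$ with $\det(Ax - By) = (-1)^{n/2} f(x, y)$, I would view the pencil $M(x, y) := Ax - By$ as a graded map of free $\Z[x, y]$-modules. Its cokernel is a coherent sheaf on $\P^1_\Z$ supported on the subscheme $V(f) \subset \P^1_\Z$ cut out by $f$. By Wood's identification of this subscheme (a finite flat scheme over $\Z$) with $\Spec R_f$, the cokernel becomes a rank-one $R_f$-module, hence a fractional ideal $I \subset K_f$, and the quotient presentation endows it with a canonical ordered $\Z$-basis inherited from $\Z[x, y]^n$. The transposed pencil $M^t$ similarly yields the second based ideal $I'$. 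A duality pairing between the two cokernels (essentially given by cofactors of $M$) lands in the fractional ideal $I_f^{n-3}$, producing a scalar $\alpha \in K_f$ with $II' \subseteq \alpha I_f^{n-3}$; the norm identity $N(I)N(I') = N(\alpha) N(I_f^{n-3})$ then follows from comparing $\Z$-indices on both sides.

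In the reverse direction, given $(I, I', \alpha)$ satisfying the stated conditions, the $R_f$-bilinear multiplication map $\mu : I \otimes_{R_f} I' \to \alpha I_f^{n-3}$, divided by $\alpha$, has image in the based ideal $I_f^{n-3}$. Using the basis (\ref{zetadef}) of $I_f^{n-3}$, one canonically extracts a rank-$2$ quotient $I_f^{n-3} \to \Z^2$ --- concretely, by projecting onto the basis elements $\zeta_{n-2}$ and $\zeta_{n-1}$ --- and composing with $\mu/\alpha$ yields two $\Z$-bilinear pairings on $I \times I'$, which in the chosen ordered bases form the pair $(A, B)$. A direct calculation using the multiplication table (\ref{multtable}) verifies $\det(Ax - By) = (-1)^{n/2} f(x, y)$, with the norm condition $N(I)N(I') = N(\alpha) N(I_f^{n-3})$ pinning down the leading coefficient. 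The equivalence $(I, I', \alpha) \sim (\kappa I, \kappa' I', \kappa \kappa' \alpha)$ preserves the pairing $(i, i') \mapsto ii'/\alpha$ identically (the factors of $\kappa, \kappa'$ cancel), so the construction descends to equivalence classes.

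A bookkeeping check with bases shows the two constructions are mutually inverse. The main technical obstacle is verifying that the ``rank-$2$ quotient'' projection $I_f^{n-3} \to \Z^2$ is canonical --- independent of specific basis choices and compatible with both the $R_f$-module structure and the $\GL_n(\Z) \times \GL_n(\Z)$ symmetry on the matrix side. This intertwining of ring-theoretic and matrix-theoretic data, encoded in the explicit formulas (\ref{zetadef}) and (\ref{multtable}), is essentially the content of Theorems~3.1 and~5.5 of Wood~\cite{Wood1}.
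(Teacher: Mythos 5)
The paper does not actually prove Theorem~\ref{wood2nn}: it is quoted directly from Wood \cite[Thms.\ 3.1 \& 5.5]{Wood1}, and the only content the paper itself supplies is the explicit formula for the reverse map, namely extracting the $\zeta_{n-1}$- and $\zeta_{n-2}$-coefficients of $\frac{1}{\alpha}\times \colon I \times I' \to I_f^{n-3}$ in the chosen bases of $I$ and $I'$. Your reverse direction reproduces that formula exactly, so there is no discrepancy on the one piece of the argument the paper makes explicit, and your forward direction (cokernel of the graded pencil $Ax-By$ on $\P^1_\Z$, identified with a rank-one module over $R_f = \Gamma(V(f),\mathcal O)$, with $I'$ from the transposed pencil and $\alpha$ from the adjugate) is indeed the route Wood takes.

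Two points where the sketch elides real work. First, ``the quotient presentation endows [the cokernel] with a canonical ordered $\Z$-basis inherited from $\Z[x,y]^n$'' is not quite right as stated: what one gets canonically is the degree-zero graded piece of the cokernel as a $\Z$-module isomorphic to $\Z^n$; promoting this to a \emph{fractional ideal} inside $K_f$ with a marked basis requires choosing an $R_f$-linear embedding into $K_f$, and that choice is exactly the source of the $\kappa$-ambiguity in the equivalence relation, so it must be tracked rather than declared canonical. Second, the map $I_f^{n-3} \to \Z^2$ is a projection onto the last two coordinates of the specific basis $\langle 1, \theta, \ldots, \theta^{n-3}, \zeta_{n-2}, \zeta_{n-1}\rangle$ fixed by (\ref{zetadef}), not an intrinsic quotient; proving this projection yields well-defined and $\GL_n(\Z)\times\GL_n(\Z)$-equivariant matrix entries inverse to the cokernel construction is the heart of Wood's Theorems~3.1 and~5.5. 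You explicitly defer these to \cite{Wood1}, which is reasonable for a cited theorem, but as written the proposal is a faithful outline rather than a self-contained proof.
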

The way to recover a pair $(A,B)$ of $n\times n$ matrices from a
triple $(I,I',\alpha)$ is by taking the coefficients of $\zeta_{n-1}$
and $\zeta_{n-2}$ in the image of the map
$\frac1\alpha\times:I\times I'\to I_f^{n-3}$ in terms of the $\Z$-bases of $I$ and $I'$. 

The symmetric version of this theorem is as follows 
(see~\cite[Thms.\ 4.1 \& 5.7]{Wood1}, or \cite[Thm.\ 16]{hclI} and
\cite[Thm.\ 4]{hclII}
for the special cases $n=2$ and $n=3$):
\begin{theorem}[Wood]\label{woodsym}
The 
elements of $\Z^2\otimes\Sym_2\Z^n$ having a given invariant binary $n$-ic form $f$ 
with nonzero discriminant
are in canonical bijection with equivalence classes of pairs
$(I,\alpha)$, where $I\subset K_f$ is a based ideal of $R_f$, $\alpha\in
K_f$, $I^2\subseteq \alpha I_f^{n-3}$ as ideals, and
$N(I)^2=N(\alpha)N(I_f^{n-3})$.
$($Here, two pairs
$(I,\alpha)$ and $(J,\beta)$ are {\em equivalent} if 
there exists $\kappa\in K_f^\times$ such that $J=\kappa I$ and $\beta=\kappa^2\alpha.)$
\end{theorem}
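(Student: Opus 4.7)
The plan is to derive Theorem~\ref{woodsym} as a consequence of Theorem~\ref{wood2nn} by imposing a symmetry condition on the triples $(I,I',\alpha)$. The starting point is that the inclusion $\Z^2\otimes\Sym_2\Z^n\hookrightarrow\Z^2\otimes\Z^n\otimes\Z^n$ realizes the symmetric space as the fixed locus of the involution $\sigma$ that swaps the two $\Z^n$ factors. I would first verify that under the explicit construction of Theorem~\ref{wood2nn}---in which the matrix entries come from reading off coefficients of $\zeta_{n-1}$ and $\zeta_{n-2}$ in $\alpha^{-1}e_if_j$, where $\{e_i\}$ and $\{f_j\}$ are the chosen bases of $I$ and $I'$---the involution $\sigma$ corresponds to the swap $(I,I',\alpha)\mapsto(I',I,\alpha)$, since $e_if_j=f_je_i$ in $K_f$. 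So pairs $(A,B)\in\Z^2\otimes\Sym_2\Z^n$ correspond bijectively to swap-invariant equivalence classes of triples.

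Next, I would unpack swap-invariance. If $(I,I',\alpha)\sim(I',I,\alpha)$, there exist $\kappa,\kappa'\in K_f^\times$ with $I'=\kappa I$, $I=\kappa'I'$, and $\kappa\kappa'\alpha=\alpha$; the last equation forces $\kappa'=\kappa^{-1}$. Within the triple-equivalence class of $(I,\kappa I,\alpha)$, I can then find a diagonal representative $(J,J,\beta)$: the equivalence of triples sends $(I,\kappa I,\alpha)$ to $(\lambda_1 I,\lambda_2\kappa I,\lambda_1\lambda_2\alpha)$, and the two slots coincide as based ideals precisely when $\lambda_1=\lambda_2\kappa$. Letting $\lambda_2=\mu$ vary then yields the one-parameter family $(\mu\kappa I,\mu\kappa I,\mu^2\kappa\alpha)$ of diagonal representatives. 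This family is exactly the orbit of $(\kappa I,\kappa\alpha)$ under the pair-equivalence $(J,\beta)\sim(\mu J,\mu^2\beta)$ declared in Theorem~\ref{woodsym}, which supplies the descent from triples to pairs.

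It remains to check that the ideal and norm conditions transcribe correctly and that the procedure is reversible. On the forward side, setting $J=\kappa I$ and $\beta=\kappa\alpha$, the containment $II'\subseteq\alpha I_f^{n-3}$ becomes $J^2=\kappa^2 I^2\subseteq\kappa\alpha I_f^{n-3}=\beta I_f^{n-3}$, and the norm equality $N(I)N(I')=N(\alpha)N(I_f^{n-3})$ becomes $N(J)^2=N(\beta)N(I_f^{n-3})$ after multiplying through by $N(\kappa)$. Conversely, any pair $(I,\alpha)$ with $I^2\subseteq\alpha I_f^{n-3}$ and $N(I)^2=N(\alpha)N(I_f^{n-3})$ produces a triple $(I,I,\alpha)$ satisfying the hypotheses of Theorem~\ref{wood2nn}, whose associated matrix pair $(A,B)$ is manifestly $\sigma$-invariant, hence symmetric. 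The invariant binary form $f$ is preserved throughout because it is already preserved by Theorem~\ref{wood2nn}.

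The main obstacle I expect lies in the bookkeeping of the based-ideal equivalence: namely, showing that the apparently two-parameter ambiguity $(\lambda_1,\lambda_2)$ in the triple-equivalence collapses to the one-parameter ambiguity $\mu\mapsto(\mu J,\mu^2\beta)$ after restricting to diagonal representatives, so that the assignment $(I,I',\alpha)\mapsto(J,\beta)$ descends to a well-defined bijection on equivalence classes rather than merely on representatives. Verifying this rigidity of based ideals---and then checking equivariance of the explicit construction of Theorem~\ref{wood2nn} with respect to the involution $\sigma$---should be the core technical content of the argument.
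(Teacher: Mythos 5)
The paper does not prove Theorem~\ref{woodsym}; it is cited directly from Wood's work (Theorems~4.1 and 5.7 of \cite{Wood1}), with the non-symmetric Theorem~\ref{wood2nn} likewise quoted rather than proved. So there is no ``paper's own proof'' to compare against, and your proposal is best read as an account of how one would deduce the symmetric statement from the general one.

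That said, the descent you sketch is sound. The key equivariance step---that the recipe
$(A,B)_{ij} = \text{(coefficients of $\zeta_{n-1},\zeta_{n-2}$ in $e_i f_j/\alpha$)}$
intertwines the swap of $\Z^n$-factors with the swap $(I,I',\alpha)\mapsto(I',I,\alpha)$ because $e_if_j=f_je_i$---is exactly what makes the restriction to $\Sym_2\Z^n$ correspond to $\sigma$-fixed triple classes. Your rigidity worry at the end is resolved by the based-ideal convention itself: since $\lambda I = I$ as \emph{based} ideals forces $\lambda=1$ (the basis elements span $K_f$ over $\Q$), the relations $I'=\kappa I$, $I=\kappa'I'$ pin down $\kappa'=\kappa^{-1}$, and the diagonal representatives $(\lambda_1 I,\lambda_2\kappa I,\lambda_1\lambda_2\alpha)$ with $\lambda_1 I=\lambda_2\kappa I$ collapse as you claim to the one-parameter family $(\mu\kappa I,\mu\kappa I,\mu^2\kappa\alpha)$, which is precisely a single pair-equivalence class. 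Injectivity of the descent is the same computation run backwards: if $(J_1,J_1,\beta_1)\sim(J_2,J_2,\beta_2)$ via $(\lambda_1,\lambda_2)$, then $\lambda_1 J_1=\lambda_2 J_1$ forces $\lambda_1=\lambda_2$, recovering pair-equivalence. The transcription of the containment $II'\subseteq\alpha I_f^{n-3}$ and the norm relation is also correct (both pick up a single factor of $\kappa$, respectively $N(\kappa)$, which cancel). The one hypothesis you use implicitly---that $\alpha$ is a unit, so that $\kappa\kappa'\alpha=\alpha$ really gives $\kappa\kappa'=1$---is guaranteed by the nonzero-discriminant assumption together with $N(\alpha)=N(I)^2/N(I_f^{n-3})\neq0$. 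The argument is valid; whether it coincides with Wood's own proof of Theorems~4.1/5.7 (which may well argue directly rather than by descent from the non-symmetric case) cannot be determined from this paper.
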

Here, $\Sym_2\Z^n$ denotes the $\Z$-module of symmetric $n\times n$ 
matrices having entries in $\Z$.  Again, the way to recover a pair $(A,B)$ of symmetric $n\times n$ matrices from a
triple $(I,\alpha)$ is by taking the coefficients of $\zeta_{n-1}$
and $\zeta_{n-2}$ in the image of the map
\begin{equation}\label{mt1}
\frac1\alpha\times:I\times I\to I_f^{n-3}
\end{equation}
 in terms of the $\Z$-basis of $I$. 

\vspace{.1in}
With these preliminaries in hand, we now turn to the question of when there exists a 
pair $(A,B)\in \Z^2\otimes\Sym_2\Z^n$ of symmetric $n\times n$ 
matrices over $\Z$ having a given binary $n$-ic form $f$ as its invariant
form.  If $n$ is odd, then it always exists: we simply take the
element $(A,B)$ in $\Z^2\otimes\Sym_2\Z^n$ corresponding to the pair
$(I_f^{(n-3)/2},1)$.  Here, we use the fact that $I_f^k\cdot I_f^{k'}=
I_f^{k+k'}$; also, $N(I_f^k) = 1/f_0^k$, implying that
$N(I_f^{(n-3)/2})^2 = N(I_f^{n-3})$.

Now let $n$ be even.  Then the above construction of a square root of $I_f^{n-3}$ does not work.
However, we now show how one can construct a pair of  $n\times
n$ symmetric matrices over $\Z$ having invariant form $f$ using a rational point
on the hyperelliptic curve $z^2=f(x,y).$
 
First, suppose that we have a symmetric orbit 
with invariant form $f$ corresponding to the pair $(I,\alpha)$.  Then $N(I)^2 = N(\alpha)\cdot
N(I_f^{n-3})=N(\alpha)/f_0^{n-3}$.
If $\alpha=a\theta+b$, then this gives a rational point on
$z^2=f(x,y)$, since then $$N(\alpha)=a^nN(\theta+b/a) = \frac{a^nf(-b/a,1)}{f_0}
= \frac{f(-b,a)}{f_0},$$ 
implying that $$f(-b,a)=
{f_0N(\alpha)}=\frac{f_0N(I)^2}{N(I_f^{n-3})}={f_0^{n-2}N(I)^2}
$$ is a square.  Thus $(-b,a,f_0^{(n-2)/2}N(I))$ yields a rational point
on $z^2=f(x,y)$.

Next, suppose that $(x_0,y_0,c)$ is a rational point on $z^2=f(x,y)$.  By scaling the coordinates of this point appropriately, we may assume that $x_0$ and $y_0$ are integral and relatively prime.  Using this point $(x_0,y_0,z_0)$ on $z^2=f(x,y)$, we construct an element of $\Z^2\otimes\Sym_2\Z^n$ having invariant form $f$ as follows.  First, using an $\SL_2(\Z)$-transformation $\gamma$ on $f$, we may assume that $(x_0,y_0)=(0,1)$, so that $f_n=c^2$.  (Indeed, if we produce an element $(A,B)$ in $\Z^2\otimes\Sym_2\Z^n$ yielding this new $f$ as its invariant form, then applying $\gamma^{-1}$ to $(A,B)$ will produce the desired element yielding the original $f$) .
Now set $\alpha=\theta$, and 
note that
\begin{equation}
\theta I_f^{n-3} = \langle
c^2,\theta,\theta^2,\ldots,\theta^{n-2},f_0\theta^{n-1}
\rangle .
\end{equation}
Let \begin{equation}\label{construct}I\,=\,\langle c,\theta I_f^{(n-4)/2}\rangle 
\,=\, \langle c,\theta,\theta^2,\ldots,\theta^{(n-2)/2},\zeta_{n/2},\ldots,\zeta_{n-1}\rangle.
\end{equation}
Then one easily checks using (\ref{zetadef}) that $$I^2\subseteq \theta\cdot I_f^{n-3},$$ and furthermore $$N(I)^2 = [c/f_0^{(n-2)/2}]^2 = 
N(\theta)N(I_f^{n-3}).$$  
Thus the pair $(I,\alpha)$ gives a point in $\Z^2\otimes\Sym_2\Z^n$ with
invariant form $f$, corresponding to our integral point $(0,1,c)$ on $z^2=f(x,y)$.

The explicit pair $(A,B)\in\Z^2\otimes\Sym^2\Z^n$ we have constructed above corresponding to the integral point
$(0,1,c)$ on $z^2=f(x,y)$ is given by
\begin{equation*}
\left(\left[
\begin{array}{cc}
-1 & 0 \\ 0 & f_0 
\end{array}\right],
\left[\begin{array}{cc}
0 & c \\ c & -f_1 
\end{array}\right]\right)
\end{equation*}
for $n=2$; by
\begin{equation*}
\left(\left[\!\begin{array}{cccc}
-1 & \,0\, & 0 & 0\\ 0 & 0 & 0 & 1\\ 0 & 0 & f_0 & f_1 \\ 0 & 1 & f_1 & f_2
\end{array}\right],
\left[\begin{array}{cccc}
0 & \,\,\,0\,\,\, & 0 & c\\ 0 & 0 & 1 & 0\\ 0 & 1 & f_1 & 0 \\ c & 0 & 0 & -f_3
\end{array}\right]
\right)
\end{equation*}
for $n=4$; and by
\begin{equation*}
\left(
\left[\!\begin{array}{cccccc}
-1 & \,0\, & \,\,0\,\, & 0 & 0 & 0\\ 
0 & 0 & 0 & 0 & 0 & 1\\ 
0 & 0 & 0 & 0 & 1 & 0\\ 
0 & 0 & 0 & f_0 & f_1 & f_2\\ 
0 & 0 & 1 & f_1 & f_2 & f_3\\ 
0 & 1 & 0 & f_2 & f_3 & f_4\\ 
\end{array}\right],
\left[\!\begin{array}{cccccc}
\, 0\, & \,0\, & \,\,0\,\, & 0 & 0 & c\\ 
0 & 0 & 0 & 0 & 1 & 0\\ 
0 & 0 & 0 & 1 & 0 & 0\\ 
0 & 0 & 1 & f_1 & f_2 & 0\\ 
0 & 1 & 0 & f_2 & f_3 & 0\\ 
c & 0 & 0 & 0 & 0 & \!-f_5\!\\ 
\end{array}\right]\right)
\end{equation*} 
 for $n=6$.  For general $n$, we have
\begin{equation*}\!\!\!\!\!\!\!A=
\left[\!\!\!\!\begin{array}{cccccccccccccc}
-1&0&0 & \cdots &0& 0 & 0 & 0 & 0 & 0 & \cdots & 0 & 0 & 0 \\[.128in]
0&0&0 & \cdots &0& 0 & 0 & 0 & 0 & 0 & \cdots & 0 & 0 & 1  \\[.128in]
0&0&0 & \cdots &0& 0 & 0 & 0 & 0 & 0 & \cdots & 0 & 1 & 0  \\
\phantom{f_{n/2+1}} &\;\;\;\vdots\;\;\;& \phantom{f_{n/2+1}}  &\ddots & \phantom{f_{n/2+1}}   &\;\;\;\vdots\;\;\;& \phantom{f_{n/2+1} }  & \phantom{f_{n/2+1}}   &\;\;\; \vdots\;\;\;  &\phantom{f_{n/2+1}}   & \iddots & \iddots&\;\;\;\vdots\;\;\; &\phantom{f_{n/2+1}}    \\
0&0&0 & \cdots &0& 0 & 0 & 0 & 0 & 0 & \iddots & 0 & 0 & 0  \\[.128in]
0&0&0 & \cdots &0& 0 & 0 & 0 & 0 & 1 & \cdots & 0 & 0 & 0  \\[.128in]
0&0&0 & \cdots &0& 0 & 0 & 0 & 1 & 0 & \cdots & 0 & 0 & 0  \\[.128in]
0&0&0 & \cdots &0& 0 & 0 & f_0 & f_1 & f_2 & \cdots & f_{n/2-3} & f_{n/2-2} & f_{n/2-1}  \\[.128in]
0&0&0 & \cdots &0& 0 & 1 & f_1 & f_2 & f_3 & \cdots & f_{n/2-2} & f_{n/2-1} & f_{n/2}  \\[.128in]
0&0&0 & \cdots &0& 1 & 0 & f_2 & f_3 & f_4 & \cdots & f_{n/2-1} & f_{n/2} & f_{n/2+1}  \\
 &\vdots&  & \iddots & \iddots  &\vdots &   &   &\vdots   &  &\ddots &  & \vdots &   \\
 0&0&0 & \iddots &0& 0 & 0 & f_{n/2-3} & f_{n/2-2} & f_{n/2-1} & \cdots & f_{n-6} & f_{n-5} & f_{n-4}  \\[.128in]
0&0&1 & \cdots &0& 0 & 0 & f_{n/2-2} & f_{n/2-1} & f_{n/2} & \cdots & f_{n-5} & f_{n-4} & f_{n-3}  \\[.128in]
0&1&0 & \cdots &0& 0 & 0 & f_{n/2-1} & f_{n/2} & f_{n/2+1} & \cdots & f_{n-4} & f_{n-3} & f_{n-2}  \\
 \end{array}\!\right]  
\end{equation*}

\vspace{.1in}
\begin{equation*}
\!\!\!\!\!\!\!\!B=
\left[\!\!\!\!\begin{array}{cccccccccccccc}
0&0&0 &  &0& 0 & 0 & 0 & 0 & 0 &  & 0 & 0 & c \\[.128in]
0&0&0 & \cdots &0& 0 & 0 & 0 & 0 & 0 & \cdots & 0 & 1 & 0  \\[.128in]
0&0&0 &  &0& 0 & 0 & 0 & 0 & 0 &  & 1 & 0 & 0  \\
\phantom{f_{n/2+1}} &\;\;\;\vdots\;\;\;& \phantom{f_{n/2+1}}  & \ddots & \phantom{f_{n/2+1}}   &\;\;\;\vdots\;\;\;& \phantom{f_{n/2+1} }  & \phantom{f_{n/2+1}}   &\;\;\; \vdots\;\;\;  &\phantom{f_{n/2+1}}   & \iddots & \phantom{f_{n/2+1}} &\;\;\;\vdots\;\;\; &\phantom{f_{n/2+1}}    \\
0&0&0 &  &0& 0 & 0 & 0 & 0 & 1 &  & 0 & 0 & 0  \\[.128in]
0&0&0 & \cdots &0& 0 & 0 & 0 & 1 & 0 & \cdots & 0 & 0 & 0  \\[.128in]
0&0&0 &  &0& 0 & 0 & 1 & 0 & 0 &  & 0 & 0 & 0  \\[.128in]
0&0&0 &  &0& 0 & 1 & f_1 & f_2 & f_3 &  & f_{n/2-2} & f_{n/2-1} & 0  \\[.128in]
0&0&0 & \cdots &0& 1 & 0 & f_2 & f_3 & f_4 & \cdots & f_{n/2-1} & f_{n/2} & 0  \\[.128in]
0&0&0 &  &1& 0 & 0 & f_3 & f_4 & f_5 &  & f_{n/2} & f_{n/2+1} & 0 \\
 &\vdots&  &\iddots  &   &\vdots &   &   &\vdots   &  & \ddots &  & \vdots &   \\
 0&0&1 &  &0& 0 & 0 & f_{n/2-2} & f_{n/2-1} & f_{n/2} &  & f_{n-5} & f_{n-4} & 0  \\[.128in]
0&1&0 & \cdots &0& 0 & 0 & f_{n/2-1} & f_{n/2} & f_{n/2+1} & \cdots & f_{n-4} & f_{n-3} & 0  \\[.128in]
c&0&0 &  &0& 0 & 0 & 0 & 0 & 0 &  & 0 & 0 &- f_{n-1}  \\
 \end{array}\!\right]  
\end{equation*}
\vspace{.025in}

\noindent 
In each case, we have
$$(-1)^{n/2}\det(Ax-By) = f(x,y),$$
giving the desired pair $(A,B)$ of symmetric matrices over $\Z$ with invariant form $f$ associated to the integral point $(0,1,c)$ on $z^2=f(x,y)$.  

\vspace{.1in}
If we use $\SL_n^{\pm}$ to denote the subgroup of $\GL_n$ consisting of elements of determinant $\pm1$, then since the action of $\SL_n^\pm$ on $\Z^2\otimes\Sym_2\Z^n$ preserves the invariant binary $n$-form, our construction above yields an entire $\SL_n^\pm(\Z)$-orbit of elements in $\Z^2\otimes\Sym_2\Z^n$ having invariant from $f$ associated to a rational point on $z^2=f(x,y)$.  
However, 
the above construction of an orbit in $\Z^2\otimes\Sym_2\Z^n$ from an integral solution $(x_0,y_0,c)$ to $z^2=f(x,y)$, where $x_0$ and $y_0$ are relatively prime, could depend a priori on the choice of element $\gamma\in\SL_2(\Z)$ taking $(x_0,y_0)$ to $(0,1)$.  
We claim that 
the  $\SL_n(\Z)$-orbit
that we have constructed is in fact well defined and  independent of this choice of $\gamma$.  

To see this, note that the stabilizer
$\{\gamma\in\SL_2(\Z):(0,1)\cdot\gamma = (0,1)\}$
consists of elements of the form $\gamma=\bigl(\begin{smallmatrix}
a & b\\ c& d\end{smallmatrix}\bigr)$ where $a=d=1$ and $c=0$.  For a
given value of $b\in\Z$, the orbit constructed from the solution $(0,1,z_0)$ to $z^2=f'(x,y)=(\gamma\cdot f)(x,y)$ corresponds to the pair $(I',\theta')$ for $R_{f'}$, where
\begin{equation}
I'=\langle c,\theta' I_{f'}^{\textstyle\frac{n-4}2}\rangle
\end{equation}
and
$$
\theta'=\frac{d\theta-c}{-b\theta+a}=\frac{\theta}{-b\theta+1}.
$$
That is, $I'^2 \subseteq \theta' I_{f'}^{n-3}$ and $N(I'^2)=N(\theta')N(I_{f'}^{n-3})$.  Due to the isomorphism (\ref{ifif}), the pair $(I',\theta')$ for $R_{f'}$ corresponds to the pair 
\begin{equation}\label{ifprimecomp}
(I',\theta'/(-b\theta+1)^{n-3})=(I',\theta/(-b\theta+1)^{n-2})\sim
    ((-b\theta+1)^{\textstyle\frac{n-2}2}I',\theta)
\end{equation}
for $R_f$.  Now
$$(-b\theta+1)^{\textstyle\frac{n-2}2}I' = (-b\theta+1)^{\textstyle\frac{n-2}2}\langle c,\theta' I_{f'}^{\textstyle\frac{n-4}2}\rangle = \langle (-b\theta+1)^{\textstyle\frac{n-2}2}c, \theta I_f^{\textstyle\frac{n-4}2} \rangle=\langle c,\theta I_f^{\textstyle\frac{n-4}2}\rangle=I,
$$
so (\ref{ifprimecomp}) becomes simply $(I,\theta)$.  We concude that the $\SL_n(\Z)$-orbit arising from $\gamma\cdot f$ is the same as that coming from~$f$. 
Finally, note that changing $c$ to $-c$ in all these constructions also corresponds to a change of basis in $\SL_n^\pm$ to the basis of $I$.  
In summary, 
the $\SL_n^\pm(\Z)$-orbit that we have constructed
is independent of the particular
$\SL_2(\Z)$-transformation used to send $(x_0,y_0)$ to $(0,1)$ and
depends only on the integral solution $(x_0,y_0,\pm z_0)$ to $z^2=f(x,y)$. 

\vspace{.1in}
Finally, we note that all the definitions, constructions, and arguments we have given in this section apply equally well when working over a general principal ideal domain $D$ in place of $\Z$; we must simply change every occurence of ``$\Z$'', ``integral'', and ``rational'' with ``$D$'', ``$D$-'', and ``$k$-'', respectively, where $k$ denotes the quotient field of $D$.  In particular, the proofs of Theorems~\ref{wood2nn} and \ref{woodsym} with these changes carry over with essentially no change.  (With modified, though somewhat more subtle, statements, Wood in fact proves versions of these theorems over any base scheme; see~\cite{Wood1}.)

Also, via the same constructions, to any $D$-solution $(x_0,y_0,\pm z_0)$ to $z^2=f(x,y)$, where $f$ is a binary $n$-ic form over $D$ and $x_0$ and $y_0$ are relatively prime in $D$ (i.e., generate the unit ideal),
we may associate a well-defined $\SL_n^\pm(D)$-orbit in $D^2\otimes\Sym_2D^n$ having
invariant binary form~$f$.  
The latter construction of an orbit from a solution $(x_0,y_0,\pm z_0)$ over $D$,
where $x_0$~and~$y_0$ are relatively prime, can in fact be carried out over any ring $D$, using the general explicit formula for~$(A,B)$.

\section{Orbits over fields}\label{fieldorbits}

We now examine more carefully the orbits of 
$G(k)=\SL_n^\pm(k) 
:=\{g\in\GL_n(k)\,|\,\det(g)=\pm1\}$ 
on $V(k)=k^2\otimes \Sym_2k^n$ over a general field~$k$.   
We will be particularly interested in the arithmetic fields $k=\R$ and $k=\F_p$.  For any field $k$, we may still speak of the invariant binary $n$-ic form $f$ over $k$ associated to an element of $V(k)$ and the associated $k$-algebra $K_f=R_f$ of dimension $n$ over $k$ (which are constructed
in the identical manner).

\subsection{Classification of orbits over a general field}\label{genfield}

Over a field $k$, we have that
elements of $k^2\otimes\Sym_2k^n$ having a given separable invariant
binary $n$-ic form $f$ with nonzero leading coefficient $f_0$ 
correspond to equivalence classes of pairs $(I,\alpha)$,
where $I$ is a based ideal of $K_f$ such that $I^2=\alpha
I_f^{n-3}$ as ideals and $N(I)^2= N(\alpha)N(I_f^{n-3})$.
Since $k$ is a field, ideals $I$ in $K_f$ are all equal to $K_f$,
and so we may drop $I$ from the data and simply retain its basis.
Thus we see that elements of $k^2\otimes\Sym_2k^n$ parametrize pairs $(\BB,\alpha)$,
where $\BB$ is a basis of $K_f$, $\alpha\in K_f$, and
$N(\BB)^2=N(\alpha)/f_0^{n-3}$, up to the equivalence:
$(\BB,\alpha)\sim (\kappa \BB,\kappa^2\alpha)$ for any $\kappa\in
K_f^\times$.

The changes of basis on $\BB$ that preserve $N(\BB)^2$, and thus the condition $N(\BB)^2=N(\alpha)/f_0^{n-3}$, are precisely the elements of $G(k)$, leading to our
action of $G(k)$ on $k^2\otimes\Sym_2k^n$.  Setting $N=N(\BB)$ (which is thus well-defined up to a factor of $-1$ under the action of $G(k)$), we then see that $G(k)$-orbits on $k^2\otimes\Sym_2k^n$ parametrize equivalence classes of pairs
$(N,\alpha)$ satisfying $N^2 = N(\alpha)/f_0^{n-3}$, where $(N,\alpha)$ is equivalent to $(N(\kappa)\cdot N, \kappa^2\alpha)$ for any $\kappa\in K_f$. 

Since in any such valid pair $(N,\alpha)$, the element $N\in k^\times$ is determined by $\alpha$ (again, up to a factor of $-1$), we conclude that $G(k)$-orbits on $k^2\otimes\Sym_2k^n$ correspond to elements $\alpha\in K_f^\times/K_f^{\times2}$ mapping to the class of $f_0^{n-3}$ ($=$ the class of $f_0$) in $k^\times/k^{\times2}$.  We denote this set of $\alpha$ by $(K_f^\times/K_f^{\times2})_{N\equiv f_0}$.

Explicitly, the orbit associated to such an $\alpha\in(K_f^\times/K_f^{\times2})_{N\equiv f_0}$ is that of the pair $(A,B)$ of symmetric $n\times n$ matrices whose entries are given by taking the coefficients of $\zeta_{n-1}$ and $\zeta_{n-2}$ in the image of the map 
\begin{equation}\label{mt}
\frac1\alpha\times:\BB\times \BB\to K_f,
\end{equation} where $\BB$ is any basis of $K_f$ satisfying 
$N(\BB)^2=N(\alpha)/f_0^{n-3}$.  From this description, we then
observe that a 
change of basis to $\BB$, via an element of $\SL_n^\pm(k)$, that
preserves 
the multiplication table~(\ref{mt}) corresponds to multiplying the
basis $\BB$ by an element $\kappa\in K_f^\times$ such that $\kappa^2=1$. 

We summarize this discussion as follows:

\begin{theorem}
\label{woodsymfield}
The orbits of $\SL_n^\pm(k)$ on
$k^2\otimes\Sym_2k^n$ having a given separable invariant binary
$n$-ic form $f$ over $k$ with nonzero leading coefficient $f_0$
are in canonical bijection with elements of                     
$(K_f^\times/K_f^{\times2})_{N\equiv f_0}$. 
The stabilizer of such an element of $k^2\otimes\Sym_2k^n$  is
isomorphic to 
$K_f^\times[2]$.
\end{theorem}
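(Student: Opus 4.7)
The plan is to specialize Theorem~\ref{woodsym}, which as noted at the end of Section~\ref{rptovz} is valid over any PID and in particular over the field $k$, and then to exploit the fact that over a field every nonzero ideal in the \'etale $k$-algebra $K_f$ is all of $K_f$. First I would observe that the hypotheses that $f$ is separable and $f_0\ne 0$ force $K_f$ to be \'etale of dimension $n$ over $k$, so a based ideal in the sense of Theorem~\ref{woodsym} is nothing more than $K_f$ equipped with an ordered $k$-basis $\BB$. Consequently the containment $I^2\subseteq\alpha I_f^{n-3}$ is automatic, and the sole constraint on the pair $(\BB,\alpha)$ is the norm identity $N(\BB)^2=N(\alpha)/f_0^{n-3}$, using $N(I_f^{n-3})=1/f_0^{n-3}$. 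The equivalence relation $(\BB,\alpha)\sim(\kappa\BB,\kappa^2\alpha)$ for $\kappa\in K_f^\times$ is inherited directly from Wood's theorem.

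Next I would eliminate $\BB$ from the data. The action of $G(k)=\SL_n^\pm(k)$ on $\BB$ is through matrices of determinant $\pm 1$, so $N:=N(\BB)$ is well-defined up to a sign. Thus $G(k)$-orbits on $V(k)$ with invariant form $f$ correspond to equivalence classes of pairs $(\pm N,\alpha)$ satisfying $N^2=N(\alpha)/f_0^{n-3}$, under the relation $(\pm N,\alpha)\sim(\pm N(\kappa)N,\kappa^2\alpha)$. Since $N$ is already determined up to sign by $\alpha$, this datum can be dropped, and what remains is the class of $\alpha$ in $K_f^\times/K_f^{\times 2}$, subject only to the condition that $N(\alpha)/f_0^{n-3}$ be a square in $k^\times$---equivalently, $N(\alpha)\equiv f_0^{n-3}$ in $k^\times/k^{\times 2}$, which is precisely the defining condition of $(K_f^\times/K_f^{\times 2})_{N\equiv f_0}$.

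For the stabilizer, I would trace an element $g\in G(k)$ fixing a given orbit representative: such a $g$ acts on $\BB$ as a change of basis preserving the multiplication table~(\ref{mt}), and the equivalence relation forces this change to be scaling by some $\kappa\in K_f^\times$ with $\kappa^2\alpha=\alpha$, i.e., $\kappa^2=1$. Any such $\kappa$ satisfies $N(\kappa)=\pm 1$ automatically, so genuinely lies in $G(k)$, yielding the identification $\Stab=K_f^\times[2]$.

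The point requiring most care---though not a genuine obstacle---is verifying that the descent $(\BB,\alpha)\leadsto\alpha$ really is a bijection onto $(K_f^\times/K_f^{\times 2})_{N\equiv f_0}$: surjectivity follows because any admissible $\alpha$ is realized by scaling a fixed basis by a square root in $k^\times$ of $N(\alpha)/f_0^{n-3}$, and injectivity uses the same $\kappa^2=1$ analysis as in the stabilizer computation. All of this bookkeeping is effectively laid out in the paragraphs immediately preceding the theorem, so the proof consists of organizing those observations into a clean statement.
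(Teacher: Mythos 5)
Your proposal reproduces the paper's own argument essentially step by step: specialize Wood's Theorem~\ref{woodsym} over a field, collapse the based ideal $I$ to a basis $\BB$ of $K_f$, reduce to the class of $\alpha$ in $K_f^\times/K_f^{\times2}$ with $N(\alpha)\equiv f_0^{n-3}\equiv f_0$ in $k^\times/k^{\times2}$, and identify the stabilizer with $K_f^\times[2]$ via basis changes preserving the multiplication table~(\ref{mt}). This matches Section~\ref{genfield} of the paper, so no further comparison is needed.
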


We may now ask which orbits, when viewed as elements of 
$(K_f^\times/K_f^{\times2})_{N\equiv f_0}$, come from $k$-rational
solutions to $z^2=f(x,y)$.  
The construction of the previous section shows that if $(0,1,z_0)$ is a $k$-solution to $z^2=f(x,y)$, then the element of $(K_f^\times/K_f^{\times2})_{N\equiv f_0}$ corresponding to the solution 
$(0,1,z_0)$ 
is given by the class of $\theta$, where
$\theta$ denotes as before the root of $f(x,1)$ that is used to define
$R_f=K_f$ via (\ref{zetadef}).  

Let us next
determine the element of $(K_f^\times/K_f^{\times2})_{N\equiv f_0}$ corresponding to a
general $k$-solution 
of $z^2=f(x,y)$.  First, note that if
$(x_0,y_0,z_0)$  is a $k$-solution to $z^2=f(x,y)$, then $(0,1,z_0)$ is a $k$-solution to $z^2=f'(x,y)$, where $f'=\gamma\cdot  f$ with
$\gamma= \bigl(\begin{smallmatrix}
a & b\\ c& d\end{smallmatrix}\bigr)=
\bigl(\begin{smallmatrix}
s & \!\!-r\\ x_0& y_0\end{smallmatrix}\bigr),
$
and $r$ and $s$ are any constants in $k$ such that $rx_0+sy_0=1$.  
The $k$-solution $(0,1,z_0)$ to $z^2=f'(x,y)$ corresponds to the class of 
$\theta' = (d\theta-c)/(-b\theta+a)=(y_0\theta-x_0)/(r\theta+s)$ in $(K_f^\times/K_f^{\times2})_{N\equiv f_0'}$.  The canonical map 
$(K_{f'}^\times/K_{f'}^{\times2})_{N\equiv f'_0}\to 
(K_f^\times/K_f^{\times2})_{N\equiv f_0}$
is given by multiplication by
$(-b\theta+a)^{-(n-3)}\equiv r\theta+s$. We conclude that 
the $k$-solution $(x_0,y_0,z_0)$ to $z^2=f(x,y)$ corresponds to the element
$y_0\theta-x_0\in (K_f^\times/K_f^{\times2})_{N\equiv f_0}$.

In particular, if the $k$-orbit constructed in Section~\ref{rptovz} associated to
a $k$-rational solution $(x_0,y_0,z_0)$ to $z^2=f(x,y)$ is given by 
$\alpha\in(K_f^\times/K_f^{\times2})_{N\equiv f_0}$, then the $k$-orbit associated to
$(rx_0,ry_0,r^{n/2}z_0)$ for $r\in k^\times$ is given by $r\alpha\in(K_f^\times/K_f^{\times2})_{N\equiv f_0}$.  It follows that the set of $G(k)$-orbits corresponding to a $k$-rational {point} $P=(x_0:y_0:z_0)\in\P(1,1,n/2)$ on the hyperelliptic curve  $C:z^2=f(x,y)$ forms a torsor for $k^\times/(k^\times\cap K_f^{\times2})$.
The resulting map $\mu:C(k)
\to (K_f^\times/(K_f^{\times2}k^\times))_{N\equiv f_0}$, defined by sending $(x_0:y_0:z_0)$ to the class of $y_0\theta-x_0$, is the well-known ``$x-T$'' map of Cassels!

\subsection{Orbits over $\R$}\label{rorbits}

We may use Theorem~\ref{woodsymfield} 
to classify the $G$-orbits 
 over $\R$
having a given separable invariant binary $n$-ic form $f$ over $\R$.
If $f$ is negative definite, then there are no orbits of $G(\R)$ 
on $V(\R)$ having invariant binary form $f$ (since the norms from
$K_f\cong \C^{n/2}$ to $\R$ are all positive, while the leading
coefficient of $f$ is negative).

Otherwise, suppose $f$ is not negative definite and has $2m$ real
roots ($m\in\{0,\ldots,n/2\}$).  By an $\SL_2$-change of basis,
we may assume that the leading coefficient $f_0$ of $f$ is nonzero.  
Then $K_f\cong \R^{2m}\times
\C^{n/2-m}$.  The number of elements in
$(K_f^\times/K_f^{\times2})_{N\equiv f_0}$ is given by 1 if
$m=0$, and otherwise is $2^{2m-1}$. Namely, these elements correspond
to the possible patterns of $2m$ signs of the images of these elements
in $\R^{2m}$, subject to the condition that the number $n_-$ of minus
signs satisfies $(-1)^{n_-}=\mbox{sgn}(f_0)$.  

Thus, by Theorem~\ref{woodsymfield}, if $f$ is not negative definite
and has $2m$ real roots, then there are $\lceil 2^{2m-1}\rceil$ 
orbits
of $G(\R)$ 
on $V(\R)$ having invariant binary form $f$.  Moreover, 
the stabilizer in $G(\R)$ 
of such an orbit
is isomorphic to $K_f^\times[2]\cong (\Z/2\Z)^{n/2+m}$.

\subsection{Orbits over $\F_p$}\label{fporbits}

Next, we turn to the orbits of $G(\F_p)$ 
on $V(\F_p)$ having a given
separable invariant binary $n$-ic form $f$ over $\F_p$ with nonzero
leading coefficient $f_0$.  Suppose
that $f(x,y)$ factors into irreducible polynomials of degrees
$d_1,\ldots,d_m$, where $d_1+\cdots+d_m=n$.  Then $K_f\cong
\prod_{i=1}^m \F_{p^{d_i}}$.  Hence
$|(K_f^\times/K_f^{\times2})_{N\equiv f_0}| = 2^{m-1}$ if $p\neq 2$
and is trivial otherwise.  

It follows that the number of
$G(\F_p)$-orbits on $V(\F_p)$ having invariant binary form $f$ is
$2^{m-1}$ if $p\neq 2$ and is 1 otherwise.  Moreover, the stabilizer
in $G(\F_p)$ of such an orbit is isomorphic to $K_f^\times[2]$, which
in turn is isomorphic to $(\Z/2\Z)^{m}$ if $p\neq 2$ and is trivial
otherwise.  That is, if $p\neq2$, then the number of $G(\F_p)$-orbits yielding a
given separable $f$ is always equal to half the size of the
stabilizer.  In particular, when $p\neq 2$, the total number of
elements in $V(\F_p)$ having given separable invariant binary form $f$
is $\#G(\F_p)/2=\#\SL_n(\F_p)$, independent of $f$.  On the other
hand, since the number of orbits is equal to the size of the
stabilizer when $p=2$, the total number of elements in $V(\F_2)$
having given separable invariant binary form $f$ is $\#G(\F_2)$, which
is again equal to $\#\SL_n(\F_2)$.

Thus, for any $p$, the total number of elements in $V(\F_p)$
having given separable invariant binary form $f$ with nonzero leading
coefficient $f_0$ is given by $\#\SL_n(\F_p)$.

\subsection{The densities of local orbits coming from rational points
  on hyperelliptic curves}\label{secdensity}

Let $\mathcal P$ denote the union of the $G(\Z)$-orbits of all
elements in $V(\Z)$ that come from rational points on hyperelliptic
curves via the construction in Section~\ref{rptovz}; similarly, for each place
$\nu$ of $\Q$, let $\mathcal P_\nu$ denote the union of the $G(\Z_\nu)$-orbits of all
elements in $V(\Z_\nu)$ that come from $\Q_\nu$-rational points on hyperelliptic
curves via the construction in Section~\ref{rptovz}.  Then $\mathcal P\subset \mathcal P_\nu$ for all $\nu$.
In this subsection, we study the subset
of $G(\Z_\nu)$-orbits on $V(\Z_\nu)$ that lie in $\mathcal P_\nu$.

We consider first $\nu=\infty$.  Let $f$ be a binary $n$-ic form over
$\R$ with $2m$ real roots that is not negative definite.  We wish to
know how many of the $\lceil 2^{2m-1}\rceil$ nondegenerate orbits of
$G(\R)$ on $V(\R)$ can arise from a rational solution to $z^2=f(x,y)$
via the construction of Section~\ref{rptovz}.  
If $m=0$, then clearly there is only one such orbit.  If $m\geq 1$, then 
by a linear change of variable on the binary form $f$ if necessary, we may write
$f(x,1)=f_0\prod_{i=1}^{2m}g_i(x)\prod_{j=1}^{n/2-m}h_j(x)$, where $f_0<0$,
the $g_i$'s are monic linear, and the $h_j$'s are irreducible monic
quadratic polynomials over $\R$. We order the $g_i$'s by increasing
order of their roots in $\R$.  Then any point on the hyperelliptic curve 
$C:z^2=f(x,y)$ can be expressed as $(a:1:c)$ where $a,c\in\R$.  
The real orbit (via the construction of
Section~\ref{rptovz}) arising from the solution $(a,1,c)$ to $z^2=f(x,y)$
has corresponding pattern of signs $+\cdots+\;-\cdots -$, 
where the number of initial plus signs is
given by the number of $g_i$'s having roots smaller than $a$.  There
are thus exactly $m$ possible patterns of signs where the number
of minus signs is odd.  The negatives of these patterns of signs also can arise,
by considering instead the solutions $(-a,-1,c)$ of $z^2=f(x,y)$. (Indeed, recall that if $f$ is not definite, then each rational point on 
$C$ corresponds to $|\R^\times/( \R^\times\cap K_f^{\times2})|=2$ real orbits!) 
It follows that precisely $2m$ of the $\lceil 2^{2m-1}\rceil$
nondegenerate orbits of  
$G(\R)$ on $V(\R)$ having invariant binary form $f$ lie in $\mathcal P_\infty$ when $m\geq 1$.  

Next, we turn to $\mathcal P_p$ for a finite prime~$p\neq2$.  Let
$\bar{\mathcal P}_p$ denote the image of $\mathcal P_p$ in $V(\F_p)$.
Suppose $f$ is a (not necessarily separable) binary $n$-ic form over
$\F_p$ 
having $m$ distinct irreducible factors over $\F_p$.  
If $f$ is separable, then we have seen that
there are $2^{m-1}$ 
orbits of $G(\F_p)$ on $V(\F_p)$ having
invariant binary form~$f$.  Let $k\leq p+1$ denote the number of
$(a:b)\in \P^1(\F_p)$ such that $f(a,b)$ is a square.   The number of 
$G(\F_p)$-orbits on $V(\F_p)$ arising from such an $(a:b)$, via the
construction of Section~\ref{rptovz}, is 
$$|\F_p^\times/(\F_p^\times \cap K_f^{\times2})|=
\left\{\begin{array}{ll} 
1 & \mbox{if all factors of $f$ over $\F_p$
      have even degree;}\\
2 & \mbox{otherwise}. 
\end{array}\right.
$$
Thus at most $2k$ 
orbits of $G(\F_p)$ on $V(\F_p)$ having invariant
binary form $f$ can lie in $\bar{\mathcal P}_p$.  
By construction, the cardinality of the stabilizer of an element in any one of these orbits is at least $|K_f[2]|=2^m$, and is exactly $2^m$ if $f$ is separable.  
Noting that for a binary form $f$ over $\Q_p$ and a nonresidue $u$ modulo~$p$, not both $f$ and $uf$ can take square values at a given point $(a,b)$.  Hence, for each $m$, we may use $k_0=(p+1)/2$ as an upper estimate for $k$ on average.  
Let $I_p(m)$ denote the set of binary $n$-ic forms modulo
$p$ having $m$ distinct irreducible factors. 
It follows that 
the $p$-adic density of $\mathcal P_p$ in $V(\Z_p)$ is at most 
$$\sum_{m=0}^n
\min\left\{1,\frac{p+1}{2^{m-1}}\right\}
\frac{\#\SL_n(\F_p)}{p^{n(n+1)}}|I_p(m)|.
$$
We have not justified the factor of $\#\SL_n(\F_p)$ in the case where the binary form $f$ is not separable with $k>2^{m-1}$, but the correctness of this factor can be deduced also in this case by working over $\Q_p$ and $\Z_p$ instead of $\F_p$, as will be explained in Section~\ref{mainproof}.  Since nonseparable forms over $\F_p$ occur with density only $O(1/p)$, this issue will not have any noticeable effect on our eventual application anyhow, so we do not get into too much detail on this point.

To obtain a bound on $\mathcal P_2$, it is not effective to work modulo 2, because all integers are squares modulo 2!  So we work instead over $\Z/8\Z$.  
Let $\bar{\mathcal P}_2$ denote the image of $\mathcal P_2$ in $V(\Z/8\Z)$.  Suppose $f$ is a binary $n$-ic form over $\Z/8\Z$ 
having a factorization with $m$ distinct irreducible factors over $\Z/2\Z$.  Then the number of elements in $R_f^\times[2]$ is at least $2^{n+m}$.  Let $k\leq 12=|\P^1(\Z/8\Z)|$ denote the number of
$(a:b)\in \P^1(\Z/8\Z)$ such that $f(a,b)$ is a square.   The number of 
$G(\Z/8\Z)$-orbits on $V(\Z/8\Z)$ arising from such an $(a:b)$, via the
construction of Section~\ref{rptovz}, is at most $|(\Z/8\Z)^\times/(\Z/8\Z)^{\times2}|\cdot k=4k$.  Noting that at most one of $f$, $3f$, $5f$, and $7f$ can take a square value at a given point $(a:b)$, we see that we may use $k_0=12/4 = 3$ as an upper estimate for $k$ on average.  Let $I_8(m)$ denote the set of binary $n$-ic forms modulo
$8$ having a factorization with $m$ distinct irreducible factors modulo 2. It follows that 
the $2$-adic density of $\mathcal P_2$ in $V(\Z_2)$ is at most 
$$\sum_{m=0}^n
\min\left\{1,\frac{12}{2^{n+m-1}}\right\}\frac{\#\SL_n(\Z/8\Z)
}{8^{n(n+1)}}|I_8(m)|.$$

\section{Counting the number of orbits of bounded height}\label{counting}

Recall that we write hyperelliptic curves $C$ of genus $g$ over $\Q$
in the form
\begin{equation}\label{hypereq2}
C: z^2 = f(x,y) = f_0x^n+f_1x^{n-1}y+\cdots+f_ny^n,
\end{equation}
where $n=2g+2$ and $f$ is {\it nondegenerate}, i.e., $f$ factors into
distinct linear factors over $\bar\Q$ (equivalently, $f$ is
squarefree or the discriminant $\Delta(f)$ of $f$ is nonzero).  We define the
{\it height} $H(C)$ of $C$ by
$$ H(C):= H(f):=\max\{|f_i|\}.$$
The number of such hyperelliptic curves $C$ (or integral binary forms
$f$) having height less than $X$ is thus $\sim (2X)^{n+1}$.

For a ring $T$, let $G(T):=\SL_n^\pm(T)$ and $G'(T):=\GL_n(T)$, and set
$V(T)=T^2\otimes\Sym_2T^n$.  Then the groups $G(T)$ and $G'(T)$ both act on $V(T)$. (Note that 
$G(\Z)=G'(\Z)=\GL_n(\Z)$.)  Given an element $v=(A,B)\in V_T$, we define the
binary form $f_v$ over $T$ by $f_v(x,y):=(-1)^{n/2}\det(Ax-By)$, and
say that $v$ is {\it nondegenerate} if $f_v$ is (i.e., $f_v$ does not factor over $\Q$).  Furthermore, we say
that $v$ is {\it irreducible} if $f_v$ is. Finally, we define the
discriminant and height of an element $v\in V(\Z)$ by $\Delta(v):=\Delta(f_v)$ and $H(v):=H(f_v)$.

In this section, we wish to determine the number of 
nondegenerate
$G(\Z)$-orbits on $V(\Z)$ having height less than $X$.  We will prove the following theorem:
\begin{theorem}\label{countvz}
There exists a positive constant $\kappa$ such that the number of 
nondegenerate
$G(\Z)$-orbits on $V(\Z)$ having height less than $X$ is $\kappa\cdot X^{n+1} + o(X^{n+1})$.
\end{theorem}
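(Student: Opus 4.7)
The proof follows the geometry-of-numbers framework developed in \cite{dodpf,BS,BH,BG}, adapted here to a coregular representation without a Kostant section. Using the real-orbit classification of Section~\ref{rorbits}, I first build a fundamental set $R_X$ for the action of $G(\R)$ on nondegenerate elements of $V(\R)$ of height less than $X$: for each $m\in\{0,\ldots,n/2\}$ and each admissible sign class in $(K_f^\times/K_f^{\times 2})_{N\equiv f_0}$, select a continuous section of the invariants over the corresponding cone of (non-negative-definite) binary forms, and take the finite disjoint union $R_X=\bigsqcup_i R_X^{(i)}$, each piece carrying a uniformly bounded stabilizer weight $m_i=|\Stab(v)|$. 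Let $\mathcal{F}\subset G(\R)$ be a Siegel fundamental domain for $G(\Z)$ on $G(\R)$ built from the Iwasawa decomposition $G(\R)=NAK$. Then, up to the stabilizer weights,
$$
N^{\mathrm{nd}}(X) \;=\; \sum_i \frac{1}{m_i}\cdot\#\bigl(V(\Z)\cap \mathcal{F}\cdot R_X^{(i)}\bigr).
$$

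Next, to estimate each lattice-point count, I apply the standard averaging trick: fix a compact $G_0\subset G(\R)$ of positive Haar measure and use Fubini to rewrite
$$
\#\bigl(V(\Z)\cap \mathcal{F}\cdot R_X^{(i)}\bigr) \;=\; \frac{1}{\Vol(G_0)}\int_{g\in\mathcal{F}}\#\bigl(V(\Z)\cap g G_0 R_X^{(i)}\bigr)\, dg.
$$
In the main body of $\mathcal{F}$, where the torus coordinates $\alpha=\mathrm{diag}(t_1,\ldots,t_n)$ are bounded, Davenport's lemma applied in each slice replaces the lattice point count by volume plus a lower-order boundary error. Integrating in Iwasawa coordinates and changing variables to $f$-space then produces a main-term contribution of size $\kappa_i\cdot X^{n+1}$, where each $\kappa_i$ is an explicit product of a truncated Haar integral over $\mathcal{F}$, the Jacobian of $(A,B)\mapsto f_v$, and the volume of $\{f:H(f)<1\}$.

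The principal technical obstacle is bounding the contribution from the cuspidal region of $\mathcal{F}$, where simple-root characters on $A$ are unbounded. In prior settings the Kostant section identified all cuspidal lattice points with reducible elements and thus removed them; here no such section exists, and several cusp sub-chambers may contribute simultaneously. I decompose the cusp according to which coordinate of $V$ shrinks fastest under the torus action, and for each sub-chamber prove a dichotomy: either some negative-$A$-weight coordinate of $v\in V(\Z)$ is nonzero, in which case an entrywise volume estimate bounds the sub-chamber's contribution by $O(X^{n+1-\delta})$ for some $\delta>0$; or all such coordinates vanish, forcing $(A,B)$ into a block shape whose invariant form $f_v$ is either degenerate (excluded by nondegeneracy) or has positive-dimensional stabilizer (measure zero in $f$-space). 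Summing the estimates over the $O_n(1)$ sub-chambers bounds the total cuspidal error by $o(X^{n+1})$. Combining this with the main-term contributions and summing over the $R_X^{(i)}$ yields $N^{\mathrm{nd}}(X)=\kappa X^{n+1}+o(X^{n+1})$ with $\kappa=\sum_i \kappa_i/m_i>0$, positivity being immediate from a nonempty real orbit class (e.g.\ $m=n/2$) whose associated $R_X^{(i)}$ has positive volume.
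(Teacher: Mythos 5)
Your framework—partitioning $V(\R)$ by real orbit type, building a Borel--Harish-Chandra / Siegel fundamental domain for $G(\Z)$ on $G(\R)$, averaging over a compact $G_0$, and applying Davenport's lemma in the main body—matches the paper's approach and is correct as far as it goes. The positivity argument at the end is also fine. However, your treatment of the cuspidal region, which is precisely the novel and hard part of this theorem, has a genuine gap.

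Your ``dichotomy'' is too crude. You claim that in each cusp sub-chamber, either some small-weight coordinate of $v\in V(\Z)$ is nonzero (whence an entrywise volume estimate gives $O(X^{n+1-\delta})$), or all such coordinates vanish, forcing $(A,B)$ into a block shape with $f_v$ degenerate or with positive-dimensional stabilizer. Neither horn is correct in general. First, for many vanishing patterns $U_0$ (sets of coordinates forced to zero by being small in the cusp), the remaining nonzero coordinates do \emph{not} lie in a block shape of the kind needed for Lemma~\ref{lem1}; the discriminant condition $\Disc(\det(Ax-By))=0$ only holds when a full $k\times(n-k)$ top-left block vanishes, not for arbitrary small-weight sets. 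Second, and more importantly, for the intermediate vanishing patterns where $v$ is not forced to be degenerate, the naive entrywise (product-of-weights) volume bound is \emph{not} $O(X^{n+1-\delta})$: in some cusp directions the torus characters make the product of weights of the remaining coordinates grow, and the integral would diverge if estimated naively. This is exactly the failure one expects in the absence of a Kostant section, and it is the heart of what Proposition~\ref{hard} proves. The paper's resolution is to observe that the $|\gamma|\geq 1$ conditions on the nonvanishing small-weight variables (which must hold for an integer point to exist) allow one to multiply the integrand by a carefully chosen weighted product $\pi$ of those variables, recovering nonpositive exponents of every torus parameter $s_i$; the precise choice of $\pi$ (and the verification that it has the right weight and involves strictly fewer than $\#U_0$ variables) is nontrivial bookkeeping. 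Your proposal assumes this estimate for free.

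A second, smaller issue: your fallback ``$f_v$ has positive-dimensional stabilizer, hence the invariants lie in a measure-zero subset of $f$-space'' does not by itself give an $o(X^{n+1})$ bound on lattice points. Measure-zero subvarieties of $\R^{n+1}$ can certainly contain $\gg X^{n}$ integer points, and in a cusp region the fundamental domain is unbounded in the $g$-direction, so one cannot convert measure-zero in $f$-space into a small lattice count without an argument. The paper avoids this by only invoking the \emph{degeneracy} of $f_v$ (which is an excluded condition by hypothesis, not a small-measure one), and separately proves (Proposition~\ref{hard2}, via a congruence-sieve argument that bounds the density of reducible forms modulo many primes) that reducible elements with $a_{11}\neq0$ or $b_{11}\neq0$ are $o(X^{n+1})$. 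You would need some version of these two arguments to close the gap.
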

\noindent
We will show that the statement of Theorem~\ref{countvz} also remains true if ``nondegenerate'' is replaced by ``irreducible''; in other words, the number of reducible orbits of height less than $X$ is negligible.

Since the total number of integral binary forms of height at most $X$ is $\sim 2^{n+1}X^{n+1}$, Theorem~\ref{countvz} implies that the average number of $G(\Z)$-orbits on $V(\Z)$ having a given invariant form $f$, over all nondegenerate integral binary forms $f$, is bounded by a constant %
(depending only on $n$).

\subsection{Construction of fundamental domains}

In this subsection, we construct convenient fundamental domains for
the action of $G(\Z)$ on the set of nondegenerate elements of $V(\R)$. In the next subsection, we will then count the number of {} points in $V(\Z)$ in these
fundamental domains having bounded height, proving
Theorem~\ref{countvz}, which will then allow us to prove Theorem~1 in
Section~\ref{mainproof}.

To describe our fundamental domains explicitly, we put natural
coordinates on $V$.  Namely, $V$ may be identified with the
$n(n+1)$-dimensional space of pairs of symmetric matrices.  For
$(A,B)\in V$, we use $a_{ij}$ and $b_{ij}$ to denote the $(i,j)$
entries of $A$ and of $B$, respectively; thus the $a_{ij}$ and
$b_{ij}$ $(1\leq i\leq j\leq n)$ form a natural set of coordinates on
$V$.

\subsubsection{Fundamental sets for the action of $G'(\R)$ on $V(\R)$}\label{fundsets}

By the discussion in \S\ref{rorbits}, we may naturally partition the
set of elements in $V(\R)$ with $\Delta\neq 0$ into $\sum_{m=0}^{n/2}
\lceil 2^{2m-1}\rceil$ components, which we denote by $V^{(m,\tau)}$
for $m=0,\ldots,n/2$ and $\tau=1,\ldots,\lceil2^{2m-1}\rceil$.  For a
given value of~$m$, the component $V^{(m,\tau)}$ in $V(\R)$ maps to
the component $I(m)$ of the space of binary $n$-ic forms in $\R^{n+1}$ having
nonzero discriminant and $2m$ real roots. The parameter $\tau$ runs
over all assignments of signs $\pm$ to the $2m$ real roots, subject to
the condition that the number of $-$ signs is odd or even in
accordance with whether the leading coefficient of the invariant
binary $n$-ic form is negative or positive.  
We have seen that the stabilizer in $G(\R)$ (or $G'(\R)$) of an element $v$ in
$V^{(m,\tau)}$ has size $2^{n/2+m}$.

We first construct fundamental sets $L^{(m,\tau)}$ for the action of
$G'(\R)$ on $\{(A,B)\in V^{(m,\tau)}\}$ that consist of elements of
height 1 and are {\it bounded} sets in $V(\R)$.  
We begin with the case when $m = n/2$.
In this case, we claim that we can take $L^{(m,\tau)}$ to consist of
pairs $(A,B)$ of diagonal matrices.  Indeed, any binary $n$-ic
form in $I(m)$ has $n$ roots in $\P^1(\R)$, say $r_i=(\mu_i:\lambda_i)$.  We
may normalize $\mu_i,\lambda_i$ so that $\lambda_i>0$, $|\mu_i|,\lambda_i\leq 1$ and at least one
of $|\mu_i|,\lambda_i$ equals 1.  We then put a total ordering on such elements
$(\mu_i:\lambda_i)$ in $\P^1(\R)$ lexicographically.  We may then assume 
that $r_1<\cdots<r_n$ with respect to this ordering. 

Given such a sequence $r_1<\cdots<r_n$ in $\P^1(\R)$, where
$r_i=(\mu_i:\lambda_i)$ as above, we construct a pair $(A,B)$ of $n\times n$
real symmetric matrices, where $A$ is the diagonal matrix $(\epsilon_1
\lambda_1,\ldots,\epsilon_n \lambda_n)$ and $B$ is the diagonal matrix
$(\epsilon_1 \mu_1,\ldots,\epsilon_n \mu_n)$, where each
$\epsilon_i=\pm 1$.  Note that $\det(Ax-By)$ then has roots
$r_1,\ldots,r_n$ in $\P^1(\R)$.  The set of all such matrices $(A,B)$, over all
such sequences  $r_1\leq \cdots\leq r_n$, forms a compact set and all elements
in this set have nonzero height, by construction.
In particular, the heights of all such pairs of symmetric $n\times n$ 
matrices are bounded above and
are bounded below away from zero.  Thus if we scale all such pairs
$(A,B)$ to have height 1, over all $r_1<\cdots<r_n$ in $\P^1(\R)$, we are still in a
bounded set, which we denote by~$L(m)$.  

Now each binary $n$-ic form $f\in I(m)$ with nonzero leading
coefficient $f_0$ and height 1 occurs as the invariant binary $n$-ic form of
exactly $2^{n-1}=2^{2m-1}$ elements $(A,B)$ in $L(m)$, corresponding to the
possible choices of $\epsilon_1,\ldots,\epsilon_n$ such that
$\epsilon_1\cdots\epsilon_n=\mbox{sgn}(f_0)$. Moreover, these $2^{2m-1}$
choices of $(A,B)$ 
lie in distinct $G'(\R)$-orbits, as they yield the $2^{2m-1}$ possible patterns
of signs in $K_f\cong \R^{2m}$. 
So all the 
$G'(\R)$-orbits on $V(\R)$ having invariant binary $n$-ic form equal
to $f$ are represented in $L(m)$.  Thus, when
$m=n/2$, for each $\tau$ we may simply take $L^{(m,\tau)}:=L(m)\cap V^{(m,\tau)}$, and
this gives our desired bounded fundamental set for the action of
$G'(\R)$ on $V^{(m,\tau)}$ consisting of elements of height~1.

We may proceed in a similar manner when $m < n/2$. Any
binary $n$-ic form in $I(m)$ has $2m$ real roots and $n/2-m$ pairs of complex
conjugate roots, for a total of $n/2+m$ roots $r_1,\ldots,r_{2m}$ and $r_{2m+1},\ldots,r_{n/2+m}$
in $\P^1(\R)$ and the upper half plane of $\P^1(\C)$, respectively.  We
assume again that $r_1<\cdots<r_{2m}$.  Furthermore, we may assume
that $(r_{2m+1},\ldots,r_{n/2+m})$ lies in a fundamental domain for the action of
the symmetric group $S_{n/2-m}$ on the product of $n-m$ upper half
planes minus all the diagonals.  For all $i$, let us write again 
$r_i=(\mu_i:\lambda_i)$, where $\lambda_i>0$, $|\mu_i|,\lambda_i\leq 1$ and at least one
of $|\mu_i|,\lambda_i$ equals 1.  Also, given a complex number
$z=x+y\sqrt{-1}$, set $\psi(z)=\bigl(\begin{smallmatrix}x&y\\-y&x 
\end{smallmatrix}\bigr)$.  Then we may consider all matrices $(A,B)$
where $A$ is the block diagonal matrix
$$(\epsilon_1\lambda_1,\,\ldots\,,\epsilon_{2m}\lambda_{2m},\,\psi(\lambda_{2m+1}),\,\ldots\,,
\psi(\lambda_{n/2+m}))$$
and 
$B$ is the block diagonal matrix
$$(\epsilon_1\mu_1,\,\ldots\,,\epsilon_{2m}\mu_{2m},\,\psi(\mu_{2m+1}),\,\ldots\,,
\psi(\mu_{n/2+m}))$$
where again each $\epsilon_i=\pm1$. 
Then the binary $n$-ic form
$\det(Ax-By)$ has the desired roots $r_1,\ldots,r_{2m}$ and $r_{2m+1},\ldots,r_{n/2+m}$
in $\P^1(\R)$ and the upper half plane, respectively.
The set of all such matrices~$(A,B)$, over all
such sequences $r_1,\ldots,r_{n/2+m}$,
again lie in a bounded set where the heights of all such pairs of
symmetric $n\times n$ matrices are bounded above and
bounded below away from zero.  If we now scale all such pairs
$(A,B)$ to have height 1, over all $r_1,\ldots,r_{n/2+m}$, we obtain a
bounded set, which we denote by~$L(m)$.  We again set
$L^{(m,\tau)}:=L(m)\cap V^{(m,\tau)}$.

\subsubsection{
Fundamental sets for the action of $G'(\Z)$ on $G'(\R)$}

In \cite{BoHa}, Borel and Harish-Chandra construct a natural fundamental set $\FF$ in $G'(\R)$ for the left action of $G'(\Z)$ on $G'(\R)$.  This set $\FF$ may be expressed in the form $$
\FF=
\{ut\theta\lambda:u\in N'(t),\,t\in T',\,\theta\in K,\,\lambda\in\Lambda\},$$ where $N'(t)$ is an absolutely bounded measurable set, which depends on $t\in T'$, of unipotent upper triangular real $n\times n$ matrices; the set $T'$ is the subset of the torus of diagonal matrices with positive entries given by
\begin{equation}\label{nak}
T' = \left\{\left(\begin{array}{cccc}
 t_1^{-1}& & &\\
 & t_2^{-1} & & \\
&  & \ddots &  \\
 & & & t_{n}^{-1} 
\end{array}\right): \;t_1/t_2>c, \;\,t_2/t_3>c, \; \ldots\,,\; t_{n-1}/t_n>c\right\}
\end{equation}
where $t_n=(t_1\cdots t_{n-1})^{-1}$; $K$ is a maximal compact real subgroup of $G(\R)$; and $\Lambda=\{\lambda>0\}$ acts on $V(\R)$ by scaling.
In the above, $c$ denotes an absolute positive constant.  (Note that $t_i/t_{i+1}$ for $i=1,\ldots,n-1$ form a set of simple roots for our choice of maximal torus $T$ in $G^\ss=\SL_n\subset G$.)

It will be convenient in the sequel to parametrize $T'$ in a slightly different way.  Namely, for $m=1,\ldots,n$, we make the substitution 
$$t_m=\prod_{k=1}^{m-1}s_k^{-k}\cdot\prod_{k=m}^{n-1}s_k^{n-k}.$$
Then we may speak of elements $s=(s_1,\ldots,s_{n-1})\in T'$ as well, and the conditions on $s$ for $s=(s_1,\ldots,s_{n-1})$ to be in $T'$ is simply that, for some constants $c_k>0$, we have $s_k>c_k$ for all $k=1,\ldots,n-1$.


\subsubsection{Fundamental sets for the action of $G(\Z)$ on $V(\R)$}

For any $h\in G'(\R)$, by the previous section 
we see that $\FF hL^{(m,\tau)}$ is the union of $2^{n/2+m-1}$
fundamental domains for the action of $G(\Z)$ on $V^{(m,\tau)}$;
here, we regard $\FF hL^{(m,\tau)}$ as a multiset, where the
multiplicity of a point $x$ in $\FF hL^{(m,\tau)}$ is given by the
cardinality of the set $\{g\in\FF\,\,:\,\,x\in ghL^{(m,\tau)}\}$.
Thus, as explained in~\cite[\S2.1]{BS},  a $G(\Z)$-equivalence class $x$ in $V^{(m,\tau)}$ is
represented in this multiset $\sigma(x)$ times, where
$\sigma(x)=\#\Stab_{G(\R)}(x)/\#\Stab_{G(\Z)}(x)$. Since the stabilizer always contains negative the identity in $G(\Z)$, we see that $\sigma(x)$ is always a number between 1
and $2^{n/2+m-1}$.

For any $G(\Z)$-invariant set $S\subset V^{(m,\tau)}\cap V(\Z)$, let $N(S;X)$
  denote the number of $G(\Z)$-equivalence classes of 
  elements $(A,B)\in S$ satisfying $H(A,B)<X$.
  Then we conclude that, for any $h\in G'(\R)$, the product $2^{n/2+m-1}\cdot
N(S;X)$ is exactly equal to the number of integer
points in $\FF hL^{(m,\tau)}$ having height less than $X$,
with the slight caveat that the (relatively rare---see
Proposition~\ref{gzbigstab}) points with $G(\Z)$-stabilizers of
cardinality $2r$ ($r>1$) are counted with weight~$1/r$.

As mentioned earlier, the main obstacle to counting integer points of
bounded height in a single domain $\FF hL^{(m,\tau)}$ is that the
relevant region is not bounded, but rather has cusps going off to
infinity.  We simplify the counting in this cuspidal region by
``thickening'' the cusp; more precisely, we compute the number of
integer points of bounded height in the region $\FF hL^{(m,\tau)}$
by averaging over lots of such fundamental regions, i.e., by averaging
over the domains $\FF hL^{(m,\tau)}$ where $h$ ranges over a certain
compact subset $G_0\in G'(\R)$.  The method, which is an extension of the method of~\cite{dodpf} and \cite{BS}, is described next.

\subsection{Counting irreducible integral points of bounded height}

In this section, we derive asymptotics for the number of
$G(\Z)$-equivalence classes of nondegenerate elements of $V(\Z)$ having bounded invariants.  
We also describe how these
asymptotics change when we restrict to counting only those elements in $V(\Z)$ that satisfy any specified finite set of congruence
conditions.  

Let $V^{(m,\tau)}$ ($m=0,\ldots,n/2$, $\tau=1,\ldots,\lceil2^{2m-1}\rceil$) denote as before the various components of $V(\R)$ of elements having nonzero discriminant. (Recall that the component $V^{(m,\tau)}$ in $V(\R)$ maps to the component $I(m)$ of binary $n$-ic forms in $\R^{n+1}$ having
nonzero discriminant and $2m$ real roots, and the parameter $\tau$ runs
over all assignments of signs $\pm$ to the $2m$ real roots, subject to 
the condition that the number of $-$ signs is odd or even in
accordance with whether the leading coefficient of the invariant
binary $n$-ic form is negative or positive.)
Let 
\[c_{m,\tau} = \frac{\Vol(\FF L^{(m,\tau)}\cap \{v\in V(\R):H(v)<1\})}{2^{n/2+m-1}}.\]
Then in this section we prove the following theorem:

\begin{theorem}\label{thmcount}
 Fix $m,\tau$.  For any $G(\Z)$-invariant set $S\subset V(\Z)^{(m,\tau)}:=V(\Z)\cap V^{(m,\tau)}$, let $N(S;X)$
  denote the number of $G(\Z)$-equivalence classes of nondegenerate
  elements $(A,B)\in S$ satisfying $H(A,B)<X$. Then
$$N(V(\Z)^{(m,\tau)};X)=c_{m,\tau}X^{n+1}+o(X^{n+1}),$$
and
\begin{equation}\label{cmtformula}
c_{m,\tau} = 2^{n/2-m+1}\zeta(2)\zeta(3)\cdots\zeta(n)
\Vol(\{f\in I(m)\subset \R^{n+1}:H(f)<1\}).
\end{equation}
\end{theorem}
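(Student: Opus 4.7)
The plan is to adapt the geometry-of-numbers averaging method of \cite{dodpf,BS,BH} to the representation $V=\R^2\otimes\Sym_2\R^n$. Since $\FF hL^{(m,\tau)}$ covers a $G(\Z)$-fundamental domain on $V^{(m,\tau)}$ with generic multiplicity $2^{n/2+m-1}$, I would start from the identity
$$2^{n/2+m-1}\cdot N(V(\Z)^{(m,\tau)};X)\;=\;\frac{1}{\Vol(G_0)}\int_{h\in G_0}\#\bigl\{v\in\FF hL^{(m,\tau)}\cap V(\Z):H(v)<X\bigr\}\,dh,$$
valid up to the small contribution of orbits with enlarged $G(\Z)$-stabilizer, where $G_0\subset G'(\R)$ is a fixed compact set with nonempty interior; averaging over $G_0$ smoothens the counting region. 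Using the Iwasawa/Borel coordinates $\FF=N'(t)T'K\Lambda$ with simple-root parameters $s_1,\ldots,s_{n-1}$ as in (\ref{nak}), I would then split $\FF=\FF_{\rm main}\sqcup\FF_{\rm cusp}$ according to whether all $s_k$ remain below a large cut-off $C$ or not.

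On the main body, $\FF_{\rm main}G_0 L^{(m,\tau)}\cap\{H<X\}$ is a bounded set with piecewise smooth boundary, so Davenport's lattice-point lemma reduces the integer count to its Euclidean volume plus $O(X^n)$. Passing from the $(n,s,\theta,\lambda,w)\in N'\times T'\times K\times\Lambda\times L^{(m,\tau)}$ coordinates to the ambient $v\in V$ via the Jacobian of the $G'(\R)$-action on $V$, the torus integral over $T'$ reproduces the familiar product $\zeta(2)\cdots\zeta(n)$ (i.e.\ the covolume of $\SL_n(\Z)$ in $\SL_n(\R)$), while the slice integral identifies with $\Vol(\{f\in I(m):H(f)<1\})$. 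The explicit prefactor $2^{n/2-m+1}$ in (\ref{cmtformula}) would then emerge from matching the $2^{2m-1}$ real $G'(\R)$-orbits per invariant form (Section~\ref{rorbits}) against the $2^{n/2+m-1}$-fold covering degree of $\FF hL^{(m,\tau)}$ and the comparison of the $G$ versus $G'$ normalizations.

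The main obstacle is showing that the cuspidal strata contribute only $o(X^{n+1})$ to the count of nondegenerate points. In previously studied coregular representations with a Kostant section, one distinguished primary cusp reproduces the section (supplying the main term) and every other cusp contributes only reducible or degenerate orbits; here, as the introduction emphasizes, \emph{no} Kostant section exists, so each cuspidal stratum must be controlled on its own. My plan is to index the strata by the subset $W\subseteq\{1,\ldots,n-1\}$ of simple roots with $s_k>C$, and in each stratum argue: (i) the torus weights on the entries $a_{ij},b_{ij}$, together with the constraint $H(v)<X$, force specified entries of any integer point to vanish; (ii) once these entries vanish, the invariant form $f_v=(-1)^{n/2}\det(Ax-By)$ either factors nontrivially over $\Q$, has $\Delta(f_v)=0$, or vanishes identically, in each case placing $v$ in a set contributing $o(X^{n+1})$; (iii) integrating the Jacobian in the remaining coordinates over $\{s_k>C\}$ gives $O(X^{n+1}/C^\epsilon)$ for some $\epsilon=\epsilon(W)>0$. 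Letting $C\to\infty$ slowly with $X$ then yields the cuspidal estimate, completing the proof; and, since reducible integral binary $n$-ic forms of height $<X$ number $o(X^{n+1})$, the same argument upgrades ``nondegenerate'' to ``irreducible'' as claimed after the theorem.
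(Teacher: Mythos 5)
Your overall framework---averaging over a compact $G_0$, Iwasawa coordinates $s_1,\ldots,s_{n-1}$, Davenport's lemma on the main body, and the observation that the absence of a Kostant section means every cuspidal stratum must be handled on its own merits---matches the paper's strategy, and the main-body volume computation is sound in outline. However, there is a genuine gap in your treatment of the cusp. Step (ii) asserts that whenever the torus weights force some coordinates of an integer point to vanish, the invariant form $f_v$ must factor over $\Q$, have $\Delta=0$, or vanish. This is false for most cuspidal strata: for instance if only $a_{11}=b_{11}=0$, the form $\det(Ax-By)$ is generically irreducible with nonzero discriminant. The paper's degeneracy mechanism (Lemma~\ref{lem1}) only applies when an entire top-left $k\times(n-k)$ block of both $A$ and $B$ vanishes, and the paper invokes it only in the narrow set of vanishing patterns $U_0$ whose minimal nonvanishing weights $U_1$ involve $a_{ij}$ or $b_{ij}$ with $i+j>n$. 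In every other stratum, the cuspidal points are nondegenerate and must be bounded directly; this is where your step (iii) fails as stated. The naive volume integrand $\prod_{\gamma\in U\setminus U_0}w(\gamma)\prod_k s_k^{-nk(n-k)}$ can have \emph{positive} exponents in some $s_k$, so the integral over $\{s_k>C\}$ does not decay and does not give $O(X^{n+1}/C^\varepsilon)$. The paper's Proposition~\ref{hard} resolves this by multiplying the integrand by the absolute value of a carefully chosen weighted product $\pi$ of minimal-weight coordinates $\gamma_k$ (which is legitimate because $|\gamma_k|\geq 1$ on the support) to force all $s_k$-exponents nonpositive while keeping $\#\pi<\#U_0$; this combinatorial device is the essential new ingredient needed in the absence of a Kostant section, and it is missing from your proposal.

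Two secondary issues. First, the constant $2^{n/2-m+1}$ does not emerge merely from comparing the $2^{2m-1}$ real $G'(\R)$-orbits per form with the $2^{n/2+m-1}$-fold covering multiplicity: one must compute the Jacobian $|\mathcal J|$ in Proposition~\ref{genjac}, and the paper shows $|\mathcal J|=2^n$ by a $p$-adic comparison (equating the $\Z_p$-volume of a congruence class to $\#\SL_n(\F_p)/p^{n(n+1)}$); without this computation your prefactor is off. Second, your proposed upgrade from ``nondegenerate'' to ``irreducible'' --- ``since reducible integral binary $n$-ic forms of height $<X$ number $o(X^{n+1})$'' --- does not directly suffice, because the number of $G(\Z)$-orbits above a given reducible form is not uniformly bounded. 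The paper's Proposition~\ref{hard2} instead uses a congruence sieve: it exploits the fact that a $\Q$-reducible element is reducible mod $p$ for every $p$, combined with the uniform count $\#\SL_n(\F_p)$ of elements over a separable form mod $p$, to show the residual density $\prod_{p<Y}\mu_p(S^{\rm red}_p)\to 0$ as $Y\to\infty$.
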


\subsubsection{Averaging over fundamental domains}\label{avgsec}

Let $G_0$ be a compact left $K$-invariant set in $G'(\R)$ that is the
closure of a nonempty open set and in which every element has
determinant greater than or equal to $1$. 
Then for any $m,\tau$, we may write
\begin{equation}
N(V(\Z)^{(m,\tau)};X)=\frac{\int_{h\in G_0}\#\{x\in \FF hL\cap V(\Z)
:
  H(x)<X\}dh\;}{2^{n/2+m-1}\cdot\int_{h\in G_0}dh},
\end{equation}
where 
we use $L$ as shorthand $L^{(m,\tau)}$.  The denominator of the
latter expression is an absolute constant $C_{G_0}^{(m,\tau)}$ greater than
zero.

More generally, for any $G(\Z)$-invariant subset $S \subset
V(\Z)^{(m,\tau)}$, let $N(S;X)$ denote the number of 
nondegenerate
$G(\Z)$-orbits in $S$ having height less than $X$. 
Then $N(S;X)$ can be similarly expressed as
\begin{equation}\label{eq9}
N(S;X)=\frac{\int_{h\in G_0}\#\{x\in \FF hL\cap S
: H(x)<X\}dh\;}{C_{G_0}^{(m,\tau)}}.
\end{equation}
We use (\ref{eq9}) to define $N(S;X)$ even for
sets $S\subset V(\Z)$ that are not necessarily $G(\Z)$-invariant.

Now, given $x\in V^{(m,\tau)}$, let $x_L$ denote the {unique} point in $L$
that is $G'(\R)$-equivalent to $x$. We have

\begin{equation}
N(S;X)=\frac{1}{C_{G_0}^{(m,\tau)}}\sum_{\substack{{x\in
      S
      }\\[.02in]{H(x)<X}}}\int_{h\in G_0} \#\{g \in \FF :
x=ghx_L\} dh.
\end{equation}
For a given $x\in S
$, there exist a finite number of elements
$g_1,\ldots,g_k\in G(\R)$ satisfying $g_jx_L=x$.  We then have
$$\int_{h\in G_0} \#\{g \in \FF :x=ghx_L\} dh=\displaystyle\sum_j\int_{h\in G_0} \#\{g \in \FF :gh=g_j\} dh=\displaystyle\sum_j\int_{h\in G_0\cap\FF^{-1}g_j}dh.$$
As $dh$ is an invariant measure on $G$, we have
$$\displaystyle\sum_j\int_{h\in G_0\cap\FF^{-1}g_j}\!\!\!\!\!dh=\displaystyle\sum_j\int_{g\in G_0g_j^{-1}\cap\FF^{-1}}\!\!\!\!\!dg=\displaystyle\sum_j\int_{g\in\FF}\#\{h \in G_0 :gh=g_j\} dg=\int_{g\in \FF} \#\{h \in G_0 :x=ghx_L\}
 dg.$$
Therefore,
\begin{eqnarray}\label{eqavg}
N(S;X)&=&\frac{1}{C_{G_0}^{(m,\tau)}}\,\sum_{\substack{x\in S
\\[.02in]
    H(x)<X}} \int_{g\in\FF} \#\{h \in G_0 : x=ghx_L\}dg.\\
&\!\!=\!\!&  \frac1{C_{G_0}^{(m,\tau)}}\int_{g\in\FF}
\#\{x\in S
\cap gG_0L:H(x)<X\}\,dg\\[.075in] 
&\!\!=\!\!&  \frac1{C_{G_0}^{(m,\tau)}}\int_{g\in N'(s)A'\Lambda K}
\#\{x\in S
\cap u s\lambda \theta G_0L:H(x)<X\}
dg,
\end{eqnarray}
where $dg$ is a Haar measure on $G'(\R)$.  Explicitly, we have
\[dg \,=\,  \prod_{k=1}^{n-1} t_k^{2k-2n} \cdot du\, d^\times t\,d^\times \lambda\, d\theta = 
\prod_{k=1}^{n-1} s_k^{-nk(n-k)}\cdot du\, d^\times s\,d^\times \lambda\, d\theta\,,\]
where $du$ is an invariant measure on the group $N$ of unipotent upper triangular matrices in $G(\R)$, and where we normalize the invariant measure $d\theta$ on $K$ so that $\int_{K} d\theta=1.$


Let us write $E(u,s,\lambda,X) = u s \lambda G_0L\cap\{x\in V^{(m,\tau)}:H(x)<X\}$.
As $KG_0=G_0$ and $\int_K d\theta =1 $, we have
\begin{equation}\label{avg}
N(S;X) = \frac1{C_{G_0}^{(m,\tau)}}\int_{g\in N'(s)A'\Lambda}                              
\#\{x\in S
\cap E(u,s,\lambda,X)\}\prod_{k=1}^n s_k^{-nk(n-k)}
\cdot du\, d^\times s\,d^\times \lambda\,.
\end{equation}
The expression (\ref{avg}) for $N(S;X)$
will be of use frequently in the sections
that follow.

\subsubsection{An estimate from the geometry of numbers}

To estimate the number of lattice points in $E(u,s,\lambda,X)$, we
have the following proposition due to
Davenport~\cite{Davenport1}. 

\begin{proposition}\label{davlem}
  Let $\mathcal R$ be a bounded, semi-algebraic multiset in $\R^n$
  having maximum multiplicity $m$, and that is defined by at most $k$
  polynomial inequalities each having degree at most $\ell$.  Let $\RR'$
  denote the image of $\RR$ under any $($upper or lower$)$ triangular,
  unipotent transformation of $\R^n$.  Then the number of integer
  lattice points $($counted with multiplicity$)$ contained in the
  region $\mathcal R'$ is
\[\Vol(\mathcal R)+ O(\max\{\Vol(\bar{\mathcal R}),1\}),\]
where $\Vol(\bar{\mathcal R})$ denotes the greatest $d$-dimensional 
volume of any projection of $\mathcal R$ onto a coordinate subspace
obtained by equating $n-d$ coordinates to zero, where 
$d$ takes all values from
$1$ to $n-1$.  The implied constant in the second summand depends
only on $n$, $m$, $k$, and $\ell$.
\end{proposition}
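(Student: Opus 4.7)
The plan is to prove the proposition by induction on the dimension $n$, using the fact that upper (or lower) triangular unipotent transformations preserve volume (they have determinant $1$) and interact well with slicing by one coordinate.

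For the base case $n=1$, the set $\mathcal{R}'$ is a bounded semi-algebraic subset of $\R$ defined by at most $k$ polynomial inequalities of degree at most $\ell$, so it is a disjoint union of at most $O_{k,\ell}(1)$ intervals (since each polynomial has at most $\ell$ real roots, and the sign pattern of the $k$ polynomials partitions $\R$ into at most $O(k\ell)$ cells). On each interval $[a,b]$, the number of integers is $b-a + O(1)$, and multiplicities contribute a bounded factor. Summing, the count is $\mathrm{Vol}(\mathcal{R}) + O(1)$, which matches the claim.

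For the inductive step, assume the result in dimension $n-1$. Suppose the unipotent triangular transformation is lower triangular (the upper case is symmetric); then it fixes the first coordinate $x_1$. For each integer $t$, consider the slice $\mathcal{R}'_t = \mathcal{R}' \cap \{x_1=t\}$, which is a bounded semi-algebraic multiset in $\R^{n-1}$, cut out by at most $k$ polynomial inequalities of degree at most $\ell$ in the remaining coordinates; crucially, the induced transformation on these coordinates is still lower triangular unipotent, and $\mathcal{R}'_t$ is the image under this transformation of the corresponding slice $\mathcal{R}_t$ of $\mathcal{R}$. By the inductive hypothesis, the number of integer lattice points in $\mathcal{R}'_t$ is
\[
\mathrm{Vol}_{n-1}(\mathcal{R}_t) \;+\; O\bigl(\max\{\mathrm{Vol}(\bar{\mathcal{R}_t}),\,1\}\bigr),
\]
where the projections are taken inside the slice. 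Summing over integers $t$ in the projection of $\mathcal{R}$ onto the first coordinate, the main term $\sum_t \mathrm{Vol}_{n-1}(\mathcal{R}_t)$ is a Riemann sum for $\mathrm{Vol}(\mathcal{R})$, with error bounded by $O(\mathrm{Vol}_{n-1}(\pi(\mathcal{R})))$ where $\pi$ projects out the first coordinate; this is one of the projections contributing to $\mathrm{Vol}(\bar{\mathcal{R}})$. The sum of the error terms over $t$ is bounded, using the boundedness of $\mathcal{R}$ and the fact that a projection of a slice is a projection of $\mathcal{R}$, by $O(\max\{\mathrm{Vol}(\bar{\mathcal{R}}),1\})$.

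The main obstacle is justifying carefully that the induced regions $\mathcal{R}'_t$ have projections whose volumes can be controlled in terms of projections of the original $\mathcal{R}$, and that the Riemann sum error is indeed bounded by a coordinate-projection volume of $\mathcal{R}$ (not of $\mathcal{R}'$). The first point follows because the lower triangular unipotent map is volume-preserving on each slice and because projections of slices lift to projections of $\mathcal{R}$ onto appropriate coordinate subspaces; the second follows because the unipotent transformation fixes $x_1$ and is triangular, so the number of integer values of $t$ that meet $\mathcal{R}'$ equals that for $\mathcal{R}$, and the volume of the extremal slice is bounded by a projection of $\mathcal{R}$. Combining the main and error terms completes the induction, with all implied constants depending only on $n$, $m$, $k$, and $\ell$.
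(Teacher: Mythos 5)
The paper gives no proof of this statement: it simply cites Davenport~\cite{Davenport1} and remarks that his argument for compact semi-algebraic sets ``adapts without significant change'' to bounded semi-algebraic multisets and to images under unipotent triangular maps. Your write-up attempts an actual induction-on-dimension argument, which is in the spirit of Davenport's original slicing proof, so the route is not genuinely different --- but as written it has a gap in the inductive step that needs to be repaired.

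The key problem is the claim that the slice $\mathcal R'_t = \mathcal R'\cap\{x_1=t\}$ is ``the image under a lower triangular unipotent transformation'' of $\mathcal R_t$. If $U$ is the lower triangular unipotent matrix, then for $(x_2,\dots,x_n)\in\mathcal R_t$ one has
\[
(Ux)_i \;=\; x_i \;+\; U_{i1}\,t \;+\; \sum_{j=2}^{i-1} U_{ij}\,x_j,
\]
so the induced map on the slice is an \emph{affine} map whose linear part is the lower-right $(n-1)\times(n-1)$ unipotent triangular block of $U$, but which also carries a translation by the vector $(U_{21}t,\dots,U_{n1}t)$. This translation is generically a non-integral vector depending on $t$, so the inductive hypothesis --- stated only for \emph{linear} triangular unipotent images --- does not apply to $\mathcal R'_t$. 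The gap is reparable: you should prove by induction the stronger statement in which $\mathcal R'$ is the image of $\mathcal R$ under an arbitrary affine map whose linear part is triangular unipotent. The base case $n=1$ then reads ``a translate of a bounded union of $O_{k,\ell,m}(1)$ intervals has length plus $O(1)$ integer points,'' which is still true, and the inductive step then legitimately applies the stronger hypothesis to the affine slice maps. But your current proof does not state or prove this strengthened version, and without it the induction does not close.

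A secondary, more cosmetic issue: the error bookkeeping is too compressed to be convincing. The assertion ``a projection of a slice is a projection of $\mathcal R$'' is not literally correct: if $\pi_S$ projects onto a coordinate set $S\subset\{2,\dots,n\}$ of size $d$, then $\pi_S(\mathcal R_t)$ is the slice at $x_1=t$ of $\pi_{S\cup\{1\}}(\mathcal R)$, a $(d+1)$-dimensional coordinate projection of $\mathcal R$. One then has to (i) compare $\sum_{t\in\Z}\Vol_d(\pi_S(\mathcal R_t))$ to $\int\Vol_d(\pi_S(\mathcal R_s))\,ds=\Vol_{d+1}(\pi_{S\cup\{1\}}(\mathcal R))$, bounding the Riemann-sum discrepancy by the total variation of $s\mapsto\Vol_d(\pi_S(\mathcal R_s))$, which in turn must be bounded using the bounded semi-algebraic complexity; and (ii) bound $\sum_{t\in\Z}O(1)$ by the number of integers in the $x_1$-projection of $\mathcal R$, which is at most $\Vol_1(\pi_{\{1\}}(\mathcal R))+1=O(\max\{\Vol(\bar{\mathcal R}),1\})$. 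These steps are all doable, but the sentence ``The sum of the error terms over $t$ is bounded, using the boundedness of $\mathcal R$\ldots'' does not actually establish them, and the total-variation bound in (i) is the one place where the semi-algebraicity and complexity hypotheses are genuinely used; they should not be glossed over.
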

Although Davenport states the above lemma only for compact
semi-algebraic sets $\mathcal R\subset\R^n$, his proof adapts without
significant change to the more general case of a bounded semi-algebraic
multiset $\mathcal R\subset\R^n$, with the same estimate applying also to
any image $\mathcal R'$ of $\mathcal R$ under a unipotent triangular
transformation.

\subsubsection{Estimates on reducibility}\label{redsec}

In this subsection, we describe the relative frequencies with which
reducible and irreducible elements sit inside various parts of the
fundamental domain $\FF hL$, as $h$ varies over the compact region $G_0$.

We begin by providing a simple condition which guarantees that  
a point in $V(\Z)$ has discriminant zero.  

\begin{lemma}\label{lem1}
  Let $(A,B)\in V(\Z)$ be an element such that, for some $k\in \{1,\ldots,n/2\}$, the top left $k \times (n-k)$
blocks of entries in $A$ and $B$ are equal to $0$.  Then $\Disc(\Det(Ax-B))=0$.
\end{lemma}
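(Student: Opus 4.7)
The plan is to exploit the enforced symmetric block structure of the pencil $M := Ax - By$ and extract a perfect-square factor from $\det(M)$ by a double Laplace expansion.

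First I would unpack the symmetry carefully. The hypothesis says $a_{ij} = b_{ij} = 0$ whenever $i \leq k$ and $j \leq n-k$, and since $A$ and $B$ are symmetric this also forces $a_{ij} = b_{ij} = 0$ whenever $i \leq n-k$ and $j \leq k$. Partition the index set $\{1,\ldots,n\}$ into blocks $\{1,\ldots,k\}$, $\{k+1,\ldots,n-k\}$, $\{n-k+1,\ldots,n\}$ of sizes $k$, $n-2k$, $k$ (the middle block is empty when $n = 2k$). In this partition $M$ takes the form
$$M \;=\; \begin{pmatrix} 0 & 0 & M_1 \\ 0 & P & * \\ M_1^{t} & * & * \end{pmatrix},$$
where $M_1$ is the $k \times k$ upper-right block (rows $1,\ldots,k$, columns $n-k+1,\ldots,n$), the ``inner'' $(n-2k) \times (n-2k)$ block is the symmetric pencil $P$, and the lower-left $k \times k$ block equals $M_1^{t}$ by the symmetry of $A$ and $B$.

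Next I would compute $\det(M)$ by two successive Laplace expansions. Expanding along the first $k$ rows, the only nonvanishing entries lie in columns $n-k+1,\ldots,n$, so one is forced to select exactly those $k$ columns; the complementary $(n-k) \times (n-k)$ minor is the submatrix $N$ of $M$ with rows $k+1,\ldots,n$ and columns $1,\ldots,n-k$. An analogous Laplace expansion of $N$ along its first $n-2k$ rows (which are zero in their first $k$ columns) then forces the last $n-2k$ columns to be chosen, and the resulting factorization is $\det(N) = \pm \det(P) \det(M_1^{t})$. Combining and using $\det(M_1^{t}) = \det(M_1)$ yields
$$\det(Ax-By) \;=\; \pm\,(\det M_1)^{2}\cdot\det(P).$$

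Finally I would conclude the discriminant vanishes. The form $\det(M_1)$ is a binary form in $(x,y)$ of degree $k \geq 1$, so it is never a nonzero constant. If $\det(M_1) \equiv 0$, then $\det(Ax-By) \equiv 0$ and the discriminant is trivially zero; otherwise $(\det M_1)^{2}$ contributes a nontrivial squared factor, so every root of $\det(M_1)$ is a root of $\det(Ax-By)$ of multiplicity at least $2$, forcing $\Disc(\det(Ax-By)) = 0$. The only real obstacle is the careful bookkeeping of the block structure in the two-step Laplace expansion; once the factorization $\pm(\det M_1)^2 \det(P)$ is on the page, the conclusion is immediate.
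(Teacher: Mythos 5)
Your proof is correct and takes essentially the same approach as the paper: both proofs observe that the hypothesis makes $Ax-By$ a block anti-triangular matrix whose two outer $k\times k$ anti-diagonal blocks are transposes of each other, so that $\det(Ax-By)$ picks up the squared factor $(\det M_1)^2$, and the existence of a root of $\det M_1$ (a binary form of degree $k\ge 1$) forces a repeated root and hence vanishing discriminant. Your write-up is a bit more explicit about the two-step Laplace expansion and about handling the degenerate case $\det M_1\equiv 0$, but the underlying argument is identical to the paper's.
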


\begin{proof}
If the top left $k \times (n-k)$ blocks of $A$ and $B$ are zero, then $Ax_0-By_0$ (for any $x_0,y_0\in\C$) may be viewed as a block anti-triangular matrix, with square blocks of length $k$, $n-2k$, and $k$ respectively on the anti-diagonal.  Furthermore, the two blocks of length $k$ will be transposes of each other.  There then must be a multiple $Ax_0$ of $A$ and $By_0$ of $B$ such that $Ax_0-By_0$ has the property that one and thus both of these square blocks of length $k$ are singular.  It follows that $(x_0,y_0)$ is a double root of $\Det(Ax-By)$ in $\P^1$, yielding $\Disc(\Det(Ax-By))=0$, as desired. 
\end{proof}



We are now ready to give an estimate on the number of 
elements $(A,B)\in \FF h L \cap V(\Z)$, on average, satisfying $H(A,B)<X$
and $a_{11}=b_{11}=0$:

\begin{proposition}\label{hard}
Let $h$ take a random value in $G_0$ uniformly with respect to the Haar measure
$dg$.  Then the expected number of 
elements $(A,B)\in\FF h L^{(m,\tau)}\cap V(\Z)$ such that 
$H(A,B)< X$  and $a_{11}=b_{11}=0$
is $O_\varepsilon(X^{n+1-\frac1{n}+\varepsilon})$.
\end{proposition}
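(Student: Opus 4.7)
The plan is to apply the averaging formula (\ref{avg}) from Section~\ref{avgsec}, and then to bound, for each fixed $(u,s,\lambda)$, the number of integer points in $E(u,s,\lambda,X)$ satisfying $a_{11}=b_{11}=0$ by means of Davenport's estimate (Proposition~\ref{davlem}).

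First, by (\ref{avg}), the quantity to be estimated equals, up to the bounded constant $C_{G_0}^{(m,\tau)}$,
\[
\int_{N'(s)T'\Lambda}\#\bigl\{x\in V(\Z)\cap E(u,s,\lambda,X): a_{11}=b_{11}=0\bigr\}\,\prod_{k=1}^{n-1}s_k^{-nk(n-k)}\,du\,d^\times s\,d^\times\lambda.
\]
Since the height is homogeneous of degree $n$ under $\Lambda$-scaling, the condition $H(x)<X$ restricts the $\lambda$-integration to $\lambda\ll X^{1/n}$.

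For each fixed $(u,s,\lambda)$, I drop the $b_{11}=0$ constraint for an upper bound, and apply Proposition~\ref{davlem} to the codimension-one slice of $E(u,s,\lambda,X)$ cut out by the hyperplane $\{a_{11}=0\}$. Writing an element of $E$ in the form $(A,B)=\lambda u t g v$ with $g\in G_0$ and $v\in L^{(m,\tau)}$ bounded, and setting $v'_A=gv_Ag^T$, one finds
\[
a_{11} \;=\; \lambda\sum_{k,l\geq 1}u_{1k}u_{1l}t_k^{-1}t_l^{-1}(v'_A)_{kl},
\]
which, viewed as a linear functional in $v'_A$, has its largest coefficient of size $\lesssim\lambda t_n^{-2}$, since $u$ is absolutely bounded and $t_n^{-1}$ is the largest diagonal entry on $T'$. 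The $(n(n+1)-1)$-dimensional slice-volume is therefore $O(\lambda^{n(n+1)-1}t_n^{2})$, and Proposition~\ref{davlem} gives an integer-point count $\leq O(\lambda^{n(n+1)-1}t_n^{2})+O(1)$.

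Substituting $t_n^{2}=\prod_{j=1}^{n-1}s_j^{-2j}$ into the main term and combining with the Haar factor yields the $s$-integrand $\prod_k s_k^{-k(n(n-k)+2)}$, whose exponents are all strictly negative, so the $s$-integral over $\{s_k>c_k\}$ converges to a finite constant. The remaining $\lambda$-integration gives $\int_c^{X^{1/n}}\lambda^{n(n+1)-1}d^\times\lambda\asymp X^{n+1-1/n}$, while the $O(1)$ Davenport error contributes only $\int_c^{X^{1/n}}d^\times\lambda\cdot\int\prod_k s_k^{-nk(n-k)}d^\times s=O(\log X)$, which is absorbed into $X^\varepsilon$. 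Altogether, the expected count is $O_\varepsilon(X^{n+1-1/n+\varepsilon})$.

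The main obstacle is the slice-volume bound in the second step: because the upper-triangular unipotent $u$ mixes $a_{11}$ with all higher-weight coordinates, no coordinate-by-coordinate estimate is available, and one must apply Davenport's lemma to the image of the slice after a triangular change of coordinates on $V(\R)$, an operation explicitly permitted by Proposition~\ref{davlem}. Verifying that the lower-dimensional projections appearing in the Davenport errors integrate to $\ll X^{n+1-1/n+\varepsilon}$ reduces, after entirely analogous computations, to the absolute convergence of the pure Haar integral $\int\prod_k s_k^{-nk(n-k)}d^\times s$ on the cone $\{s_k>c_k\}$, which holds for $n=2g+2\geq 4$.
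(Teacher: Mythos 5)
Your proposal takes a genuinely different route from the paper, but it contains gaps that I believe are fatal without importing the paper's machinery.

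The paper does not estimate the single slice $\{a_{11}=0\}$ in one shot. Instead, it decomposes the vanishing-set into nested subsets $U_0\supseteq\{a_{11},b_{11}\}$ indexed by which low-weight coordinates are forced to vanish (because $Cw(\gamma)<1$ in the relevant cuspidal region), invokes Lemma~\ref{lem1} to discard all $U_0$ whose minimal nonvanishing set $U_1$ contains a coordinate $a_{ij}$ or $b_{ij}$ with $i+j>n$ (such points are automatically degenerate), and then constructs an explicit auxiliary monomial $\pi$ in the variables $\gamma_1,\ldots,\gamma_{n-1}$ to force all $s_k$-exponents nonpositive while keeping $\#\pi<\#U_0$. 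Your approach uses none of these three ingredients.

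Two concrete difficulties with your argument. First, the slice-volume estimate is not correctly derived. You bound the \emph{largest} coefficient of $a_{11}$ viewed as a functional of $v'_A$ by $\lambda t_n^{-2}$ and then divide the full volume by this; but bounding the largest coefficient from above gives a \emph{lower} bound on the codimension-one slice (a flat functional cuts a thick slice), not an upper bound. In any case the natural volume is $\prod_{\gamma\neq a_{11}}w(\gamma)=\lambda^{n(n+1)-1}t_1^2$, with $t_1$, not $t_n$, appearing. (This particular issue happens not to destroy the exponent of the main term, since the $s$-integral still converges with $t_1^2$ in place of $t_n^2$; but it shows the derivation is not sound.)

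Second, and more seriously, the final sentence sweeps the entire difficulty under the rug. The Davenport error is $O(\max\Vol(\bar{\mathcal R}))$, where $\bar{\mathcal R}$ ranges over \emph{all} coordinate projections, and in the cuspidal region where some $w(\gamma)<1$ (but $\gamma\notin U_0$) the projection that omits $\gamma$ has volume $\prod_{\gamma'\neq a_{11},\gamma}w(\gamma')\gg\prod_{\gamma'\neq a_{11}}w(\gamma')$, i.e.\ the error dominates the main term, and its $s$-integral diverges (or grows with $X$) precisely because the missing weight factor makes some exponent of $s_k$ positive. This is not ``an entirely analogous computation reducing to the convergence of $\int\prod_k s_k^{-nk(n-k)}d^\times s$''. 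Handling it requires exactly the nested-$U_0$ decomposition, the use of Lemma~\ref{lem1} to exclude otherwise-uncontrollable pieces, and the compensating monomial $\pi$, which are the heart of the paper's proof.
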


\begin{proof}
Let $U$ denote the set of all $n(n+1)$ variables
$a_{ij}$ and $b_{ij}$ corresponding to the coordinates on $V(\Z)$.  Each variable $u\in U$ has a {\it weight}, defined as follows. The
action of $s=(s_1,\ldots,s_{n-1})\cdot\lambda$ on $(A,B)\in
V$ causes each variable $u$ to multiply by a certain weight which we
denote by $w(u)$.  These weights $w(u)$ are evidently rational
functions in $\lambda,s_1,\ldots,s_{n-1}$.  
Explicitly, we have
\begin{equation}\label{wtformula}
w(a_{ij})\,=\,w(b_{ij})\,=\,
 \lambda \,\;\prod_{k=1}^{i-1} s_k^{2k}\;\,\prod_{k=i}^{j-1} s_k^{2k-n}\,  \prod_{k=j}^{n-1} s_k^{2k-2n} .
\end{equation}
In particular, 
note that $\prod_{u\in U}w(b)=\lambda^{n(n+1)}$.
For variables $u,u'\in U$, we write $w(u)\leq w(u')$ if $w(u)$ has equal or smaller exponents for 
all parameters $\lambda,s_1,\ldots,s_{n-1}$.  We write $w(u)<w(u')$ if $w(u)\leq w(u')$ and $w(u)\neq w(u')$. 

Let $U_0\subset U$ be any subset of variables in $U$ containing $a_{11}$ and $b_{11}$.  We now give an upper estimate on the total number of $(A,B)\in \FF h L^{(m,\tau)}\cap V(\Z)$ such that all variables in $U_0$ vanish, but all variables in $U\setminus U_0$ do not.  To this end, let 
$V(U_0)$ denote the set of all such $(A,B)\in V(\Z)$.  Furthermore, let $U_1$ denote the 
the set of variables having minimal weights $w(u)$ among the variables $u\in
U\setminus U_0$ (we say that $u$ has ``minimal weight'' in $U\setminus U_0$ if there does not exist any variable $u'\in U\setminus U_0$ satisfying $w(u')< w(u)$).
As explained in~\cite{dodpf}, given $U_1$, it suffices to assume that $U_0$ in fact contains all variables in $u\in U$ such that $w(u)<w(u_1)$ for some $u_1\in U_1$. 

In that case,
if $U_1$ contains any variable $a_{ij}$ or $b_{ij}$ with $i+j>n$, then any element $(A,B)\in V(U_0)$ will be reducible by Lemma~\ref{lem1}.  
Thus, we may assume that all variables $a_{ij}$ and $b_{ij}\in U_1$ satisfy $i+j\leq n$.
In particular, there exist variables $\gamma_1,\ldots,\gamma_{n-1}\in U_1$ such that, for all $i\leq n/2$, 
we have
$$w(\gamma_i)\;\leq \;w(a_{i,n-i})=w(b_{i,n-i})=
\lambda\,\,\prod_{k=1}^{i-1}s_k^{2k}\prod_{k=i}^{n-i-1}\!s_k^{2k-n}\!\prod_{k=n-i}^{n-1}\!s_{k}^{2k-2n}\,.$$
Since $w(a_{i,n-i})=w(b_{i,n-i})=w(a_{n-i,i})=w(b_{n-i,i})$, we may take $\gamma_i=\gamma_{n-i}$ for $i=1,\ldots,n/2$.

For each subset $U_0\subset U$ as above, we wish to show that
$N(V(U_0);X)$, as defined by (\ref{avg}), is $O_\varepsilon(X^{n+1 - 1/n+\varepsilon})$.  
Since $N'(s)$ is absolutely bounded, the equality (\ref{avg}) implies that
\begin{equation}\label{estv0s}
N
(V(U_0);X)\ll
\int_{\lambda=c'}^{X^{1/n}}\!\!\!\int_{s_1,\ldots,s_{n-1}=c}^\infty
\sigma(V(U_0))
\prod_{k=1}^n s_k^{-nk(n-k)}\cdot
d^\times\! s \,d^\times\!\lambda,
\end{equation}
where $c'>0$ is an absolute constant guaranteeing that $E(u,s,\lambda,X)$ has points of height at least 1 and $\sigma(V(U_0))$ denotes the number of integer points
in the region $E(u,s,\lambda,X)$ that satisfy the conditions 
\begin{equation}\label{cond}
\mbox{$\gamma=0$ for $\gamma\in U_0$ and $|\gamma|\geq 1$ for $u\in U_1$}.
\end{equation}

By our construction of $L$, all entries of elements $(A,B)\in G_0L$ are uniformly bounded.
Let $C$ be a constant that bounds the absolute value of all variables
$\gamma\in U_1$ over all elements $\gamma\in G_0L$.
Then, for an element $(A,B)\in E(u,s,\lambda,X)$, we have
\begin{equation}\label{condt}
|\gamma|\leq C{w(\gamma)}
\end{equation}
for all variables $\gamma\in U_1$; therefore, the number of integer points in $E(u,s,\lambda,X)$
satisfying $\gamma=0$ for all $\gamma\in U_0$ and $|\gamma|\geq 1$ for all $\gamma\in U_1$ will be nonzero only if we have
\begin{equation}\label{condt1}
C{w(\gamma)}\geq 1
\end{equation}
for all weights $w(\gamma)$ such that $\gamma\in U_1$.  
Thus if the condition (\ref{condt1}) holds for all weights $w(\gamma)$
corresponding to $\gamma\in U_1$, then---by the definition of $U_1$---we
will also have $Cw(\gamma)\gg 1$ for all weights $w(\gamma)$ such that $\gamma\in
U\setminus U_0$.  In particular, note that we have 
\begin{equation}\label{betaest}
Cw(\gamma_k)\gg 1
\end{equation}
for all $k=1,\ldots,n-1$.

Therefore, if the region $\BB=\{(A,B)\in
E(u,s,\lambda,X):\gamma=0\;\;\forall \gamma\in U_0;\;\; |\gamma|\geq 1\;\; \forall
\gamma\in U_1\}$ contains an integer point, then (\ref{condt1}) and
Proposition~\ref{davlem} together imply that the number of integer points
in $\BB$ is $O(\Vol(\BB))$, since the volumes of all the projections of
$u^{-1}\BB$ will in that case also be $O(\Vol(\BB))$.  
Now clearly
\[\Vol(\BB)=O\Bigl(\prod_{\gamma\in U\setminus U_0} w(\gamma)\Bigr),\]
so we obtain
\begin{equation}\label{estv1s}
N(V(U_0);X)\ll
\int_{\lambda=c'}^{X^{1/n}} \!\!\!\int_{s_1,\ldots,s_{n-1}=c}^\infty
\prod_{\gamma\in U\setminus U_0} w(\gamma)
\, \prod_{k=1}^n s_k^{-nk(n-k)}
\cdot d^\times\! s \,d^\times\!\lambda.
\end{equation}

We wish to show that the latter integral is bounded by $O_\varepsilon(X^{n+1 - {1}/{n}+\varepsilon})$ for every choice of $U_0$.  If, for example, the total exponent of
$s_k$ in (\ref{estv1s}) is nonpositive for all $k$ in $\{1,\ldots,n-1\}$,
then it is clear that the resulting integral will be at most
$O_\varepsilon(X^{(n(n+1)-|U_0|)/n+\varepsilon})$ in value, since each $s_k$ is bounded above by a power of $X$ by (\ref{betaest}).  We will show below that this condition holds, e.g., when $a_{1,j}$ and $b_{1,j}$ are both in $U_1$ for some $j\leq n/2$.  

For cases where this nonpositive exponent condition does not hold, we observe that, due to~(\ref{betaest}), the integrand in
(\ref{estv1s}) may be multiplied by the absolute value $\pi$ of any product of the variables $\gamma_k$ ($k=1,\ldots,n-1$) without harm, and the
estimate (\ref{estv1s}) will remain true.  
Extend the notation $w$ multiplicatively, i.e., $w(\gamma\delta)=w(\gamma)w(\delta)$.
Then it suffices to choose an appropriate weighted product $\pi$ of variables $\gamma_1,\ldots,\gamma_{n/2}$ in $U_1$ such that: (1) 
$\#\pi<\#U_0$ (where $\#\pi$ denotes the
total number of variables of $U$ appearing in $\pi$, counted with
multiplicity), and (2)
$w(\pi)w(\prod_{\gamma\in U\setminus U_0}\gamma)\prod_{k=1}^{n-1}s_k^{-nk(n-k)}$ has nonpositive exponent of $s_i$ for all $i\in\{1,\ldots,n-1\}$.  Indeed, if we have such a $\pi$, then
applying the
inequalities (\ref{condt}) to each of the variables in $\pi$ yields
\begin{equation}\label{estv2s}
N(V(U_0);X)\ll
\int_{\lambda=c'}^{X^{1/n}} \!\!\!\int_{s_1,\ldots,s_{n-1}=c}^\infty
\prod_{\gamma\in U\setminus U_0} w(\gamma)\;w(\pi)
\, \prod_{k=1}^n s_k^{-nk(n-k)}
\cdot d^\times\! s \,d^\times\!\lambda.
\end{equation}
By our assumptions on $\pi$, this then gives the desired estimate
$$N(V(U_0);X)=O_\varepsilon(X^{(n(n+1)-\#U_0+\#\pi)/n+\varepsilon})=
O_\varepsilon(X^{n+1 - {1}/{n}+\varepsilon}).$$

We now choose such a $\pi$ for each candidate $U_0\subset U$.
As mentioned earlier, we claim that we may simply take $\pi=1$ if $a_{1j}$ and $b_{1j}$ are both in $U_1$ for some $j\leq n/2$.  Indeed, in such a case, $U_0$ is contained in the set $U'$ of all variables $a_{ij},b_{ij}$ such that $j<n/2$.  We then note that for each $k$, the sum of the exponents of $s_k$ in $w(\gamma)$, over all $\gamma\in U'$ where the exponent of $s_k$ in $w(\gamma)$ is negative, is equal~to 
$$\!\left\{\!\!\!\!\begin{array}{lcll}
-2\Bigl((2n-2k){\displaystyle{\frac{k(k+1)}{2}}}+\bigl(\frac{n}{2}-k-1\bigr)k(n-2k)\Bigr) &\!\!\!\!=\!\!\!\!& -nk(n-k) + 2k^2(\frac{n}2-k-1) & \mbox{\!\!\!if $1\leq k<
\frac{n}{2}$;}\\[.1in]
-2\Bigl((2n-2k)\displaystyle{\frac{\frac n2(\frac n2-1)}{2}}\Bigr) &\!\!\!\!=\!\!\!\!& -nk(n-k) + \frac n2(k+1-\frac{n}2)(n-k) & \mbox{\!\!\!if $\frac n 2\leq k<n$.}\\
\end{array}\right.
$$
It follows that, for any $U_0\subset U'$, we have
$$
\prod_{\gamma\in U\setminus U_0}\!w(\gamma)\,\prod_{k=1}^{n-1}s_k^{-nk(n-k)}$$ has nonpositive exponent of $s_i$ for all $i\in\{1,\ldots,n-1\}$. 
Hence we may take $\pi=1$ for all $U_0\subset U'$, as claimed. 

It remains to consider the situations where at least one of $a_{1,n/2}$ and $b_{1,n/2}$ is in $U_0$.  In such a case, we take 
$$ \pi = \prod_{a_{ij}\in U_0} |\gamma_{i}\gamma_{j}|^{1/2}
\prod_{b_{ij}\in U_0} |\gamma_{i}\gamma_{j}|^{1/2}\prod_{k=1}^{n/2} \,\,\gamma_k^{-2/n}.$$
The assumption that at least one of $a_{1,n/2}$ or $b_{1,n/2}$ is in $U_0$ guarantees that every $\gamma_i$ occurs in $\pi$ to a nonnegative power.  Since one easily checks that
$$\prod_{k=1}^{n/2} \,\,\gamma_k^{-2/n} = \lambda\prod_{k=n/2}^{n-1}s_k^{2},$$
we have
\begin{equation}\label{mustbound}
\begin{array}{l}
  \displaystyle w(\pi)\prod_{\gamma\in U\setminus U_0}\!w(\gamma) \, \prod_{k=1}^n s_k^{-nk(n-k)} \\ \qquad\qquad =\displaystyle 
\lambda\prod_{a_{ij}\in U_0}w(a_{ij})^{-1}w(\gamma_i\gamma_j)^{1/2}
\prod_{b_{ij}\in U_0}w(b_{ij})^{-1}w(\gamma_i\gamma_j)^{1/2}
\prod_{k=n/2}^{n-1}s_k^{2}
\, \prod_{k=1}^n s_k^{-nk(n-k)},
\end{array}
\end{equation}
and we wish to check that the exponent of $s_i$ in (\ref{mustbound}) is nonpositive for all $i$. 
Now the exponent of $s_i$ in $w(a_{ij})^{-1}w(\gamma_i\gamma_j)^{1/2}$ (or $\prod_{b_{ij}\in U_0}w(b_{ij})^{-1}w(\gamma_i\gamma_j)^{1/2}$) is nonnegative for
all $i$ by construction. It follows that, for every $i$, the maximum value of the exponent of $s_i$ in
(\ref{mustbound}) is achieved 
when $U_0=U^-$, where $U^-$ consists of all variables $a_{ij}$ and $b_{ij}$ where $i+j<n$.  In this case, one easily checks that the exponent of $s_i$ in (\ref{mustbound}) is equal to
$$ \prod_{i=1}^{n/2-1}s_i^0\prod_{i=n/2}^{n-1}s_i^{2-n(2i-n+1)},$$
which has nonpositive exponent of $s_i$ for all $i\in\{1,\ldots,n-1\}$. 
This completes the proof of Proposition~\ref{hard}.
\end{proof}

We now give an estimate on the number of 
reducible elements $(A,B)\in \FF h L \cap V(\Z)$, on average, satisfying
$a_{11}\neq 0$ or $b_{11}\neq 0$:

\begin{proposition}\label{hard2}
Let $h\in G_0$ be any element.
Then the 
number of 
reducible elements $(A,B)\in\FF h L^{(m,\tau)}\cap V(\Z)$ such that 
$H(A,B)< X$  and $a_{11}\neq 0$ or $b_{11}\neq 0$
is $o(X^{n+1})$.
\end{proposition}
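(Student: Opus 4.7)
The plan is to bound the number of reducible points by stratifying by the $\Q$-factorization type of $f=f_{(A,B)}$, then reducing to counting problems in smaller representations. The condition $(a_{11},b_{11})\neq(0,0)$ places $(A,B)$ outside the deep cusp treated in Proposition~\ref{hard}, so the geometry-of-numbers framework of Section~\ref{avgsec} applies cleanly.

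For each composition $n=d_1+d_2$ with $1\leq d_1\leq d_2$, I would stratify reducible pairs according to the degrees of the $\Q$-rational factors $f=f_1\cdot f_2$ with $\deg f_i=d_i$; multi-factor cases follow by iteration. Over $\Q$, the \'etale algebra splits as $K_f\cong K_{f_1}\times K_{f_2}$, and Theorem~\ref{woodsym} applied over $\Q$ shows every such $G(\Q)$-orbit contains a block-diagonal representative $(A_1\oplus A_2,\,B_1\oplus B_2)$, with $(A_i,B_i)\in\Q^2\otimes\Sym_2\Q^{d_i}$ having invariant form $f_i$. Writing $Y_i=H(f_i)$, the height condition $H(f)<X$ forces $Y_1 Y_2\ll X$ up to constants depending only on $n$. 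Integrating the coefficient densities $Y_i^{d_i}\,dY_i$ over the region $Y_1 Y_2\ll X$, $Y_i\geq 1$ gives $O(X^n)$ reducible forms of height $<X$, and by an inductive application of Theorem~\ref{countvz} to each smaller representation $V_{d_i}(\Z)=\Z^2\otimes\Sym_2\Z^{d_i}$, each such form contributes boundedly many orbits on average. The total count is therefore $O(X^{n+\varepsilon})=o(X^{n+1})$.

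The main obstacle is controlling the passage from a $\Q$-rational block decomposition to an integral one: a $G(\Z)$-orbit whose $\Q$-closure is block-diagonal need not be integrally block-diagonal, and the number of $G(\Z)$-orbits above a fixed $G(\Q)$-orbit involves the conductor of $R_f$ inside $R_{f_1}\oplus R_{f_2}$ together with local class data. The hypothesis $(a_{11},b_{11})\neq(0,0)$ is essential: it excludes the pathologically cuspidal orbits where this passage would be uncontrolled. Alternatively, perhaps more cleanly, one can adapt the weight analysis of the proof of Proposition~\ref{hard} directly: reducibility of $f$ imposes at least one algebraic relation among the coordinate variables $a_{ij},b_{ij}$, and combined with the non-cusp condition this reduces the effective counting dimension of the Davenport volume estimate (Proposition~\ref{davlem}) by at least one, yielding $o(X^{n+1})$ via the same weighted-integral analysis used in Proposition~\ref{hard}.
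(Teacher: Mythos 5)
Your proposal does not give a working proof, and it takes a fundamentally different route from the paper. The paper's argument is a clean mod-$p$ sieve: it observes that reducibility of $(A,B)$ over $\Q$ forces the invariant form $f_v$ to be reducible modulo $p$ for \emph{every} prime $p$, and then applies the congruence version of the count (Theorem~\ref{cong2}, whose proof does not use Proposition~\ref{hard2}) to deduce that the relevant density is at most $\prod_{p<Y}\mu_p(S_p^{\rm red})$. Using the results of \S\ref{fporbits} --- that the number of $(A,B)\in V(\F_p)$ with a prescribed separable invariant form is $\#\SL_n(\F_p)$, together with the fact that a proportion $1/n+O(1/p)$ of degree-$n$ forms over $\F_p$ are irreducible --- one finds $\mu_p(S_p^{\rm red})\leq \frac{n-1}{n}+O(1/p)$, so the product tends to $0$ as $Y\to\infty$. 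No rational stratification, block-diagonal representatives, or weight analysis is required.

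Both of your suggested arguments have concrete gaps. For the stratification approach: you correctly flag that passing from a $\Q$-block-diagonal orbit to a set of $G(\Z)$-orbits involves conductor and local class data, but then you never resolve it; the number of $G(\Z)$-orbits above a fixed $G(\Q)$-orbit with reducible invariant is not uniformly bounded, and controlling it would require a parametrization of integral orbits for reducible $f$ in terms of sub-rings/ideals of $R_{f_1}\oplus R_{f_2}$, which is exactly what the argument cannot take for granted. Also note that Theorem~\ref{countvz} is only proven for the degree-$n$ representation and cannot be ``inductively applied'' to $\Z^2\otimes\Sym_2\Z^{d_i}$ without reproving the counting machinery there. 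For the alternative weight-analysis approach: the Davenport estimate (Proposition~\ref{davlem}) controls lattice-point counts by volumes of coordinate projections, which in the framework of Proposition~\ref{hard} requires the shrinking to come from a low-weight \emph{coordinate} vanishing. Reducibility of $f_v$ over $\Q$ is not a coordinate condition and is not even a Zariski-closed condition of codimension $\geq 1$ in $V(\R)$ --- a dense set of pairs $(A,B)$ near any given one have irreducible $f_v$ and a positive-measure set have reducible $f_v$ once you are in the relevant real component --- so no single algebraic relation among the $a_{ij},b_{ij}$ drops the effective dimension. This is precisely why the paper handles reducibility by a sieve over all primes rather than by a single volume estimate.
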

Proposition~\ref{hard2} implies the claim made after Theorem~\ref{countvz}, namely, that Theorem~\ref{countvz} remains true even if ``nondegenerate'' is replaced by ``irreducible''.  We defer the proof of Proposition~\ref{hard2} to the end of the section.

By Theorem~\ref{woodsymfield}, the stabilizer in $G(\Q)$ of an element $v\in V(\Q)$ has size greater than the minimum possible size 2 precisely when the invariant binary $n$-ic form $f_v$  is reducible. Proposition~\ref{hard2} thus also implies the following result which bounds the number of
$G(\Z)$-equivalence classes of integral elements in $\FF hL^{(m,\tau)}$ with height less than $X$ that have 
large stabilizers inside $G(\Q)$:

\begin{proposition}\label{gzbigstab}
  Let $h\in G_0$ be any element.
  Then the number of 
  elements $(A,B)\in \FF h L^{(m,\tau)}\cap V(\Z)
  $ 
  such that $H(A,B)< X$
  whose stabilizer in $G(\Q)$ has size greater
  than $2$ is $o(X^{n+1})$.
\end{proposition}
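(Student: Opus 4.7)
The plan is to deduce the result by combining the algebraic identification of large $G(\Q)$-stabilizers with the counting estimates in Propositions \ref{hard} and \ref{hard2}. First I would translate the stabilizer condition into a condition on the invariant binary form. By Theorem \ref{woodsymfield} applied over $k=\Q$, if $v=(A,B)\in V(\Z)$ has nondegenerate (i.e., separable) invariant form $f_v$, then $\Stab_{G(\Q)}(v)\cong K_{f_v}^\times[2]$; since $K_{f_v}\cong \prod_i L_i$ where the $L_i$ are the fields associated to the $\Q$-irreducible factors of $f_v$, we have $|K_{f_v}^\times[2]|=2^r$ where $r$ is the number of irreducible factors of $f_v$. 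Thus $\#\Stab_{G(\Q)}(v)>2$ precisely when $f_v$ is reducible over $\Q$. If instead $v$ is degenerate, i.e., $\Delta(f_v)=0$, then $f_v$ also fails to be $\Q$-irreducible, so we include such $v$ in the count. In summary, every $v\in V(\Z)$ with $\#\Stab_{G(\Q)}(v)>2$ has reducible invariant binary $n$-ic form.

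Having reduced to counting reducible (and degenerate) integral elements of height less than $X$ in $\FF hL^{(m,\tau)}$, I would split the count according to whether $a_{11}=b_{11}=0$ or not. The case where $a_{11}\neq 0$ or $b_{11}\neq 0$ is handled immediately by Proposition \ref{hard2}, which bounds the number of reducible elements in $\FF hL^{(m,\tau)}\cap V(\Z)$ of height less than $X$ satisfying this condition by $o(X^{n+1})$ for any fixed $h\in G_0$. For the complementary case $a_{11}=b_{11}=0$, Lemma \ref{lem1} applied with $k=1$ automatically forces $\Delta(f_v)=0$, so every such element is degenerate and in particular contributes to the large-stabilizer count. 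Proposition \ref{hard} then controls the number of such elements by $O_\varepsilon(X^{n+1-1/n+\varepsilon})=o(X^{n+1})$, completing the estimate in both cases.

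The main subtlety is that Proposition \ref{hard} is stated as an expected value as $h$ ranges uniformly over $G_0$, while the present proposition asserts a bound for any fixed $h\in G_0$. To bridge this gap, I would revisit the proof of Proposition \ref{hard}: its heart is the geometry-of-numbers estimate of Proposition \ref{davlem} applied to the region cut out inside $us\lambda hL^{(m,\tau)}\cap\{H(x)<X\}$ by the vanishing of the coordinates in $U_0$, which produces a volume bound of the form $\prod_{\gamma\in U\setminus U_0}w(\gamma)$ valid pointwise in the group parameters. Since $hL^{(m,\tau)}$ remains uniformly bounded as $h$ varies in the compact set $G_0$, running the same integration over $u\in N'(s)$, $s\in T'$, $\lambda\in\Lambda$ for a single fixed $h$ reproduces the same $O_\varepsilon(X^{n+1-1/n+\varepsilon})$ asymptotic up to a constant depending only on $G_0$. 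The implicit constant in the resulting $o(X^{n+1})$ may depend on $h$, which is harmless for subsequent applications. Combining this pointwise version with the pointwise Proposition \ref{hard2} yields the proposition.
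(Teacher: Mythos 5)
Your algebraic reduction is exactly right: by Theorem~\ref{woodsymfield}, an element $(A,B)$ with separable invariant form $f_v$ has $\Stab_{G(\Q)}\cong K_{f_v}^\times[2]$, which has order $2^r$ with $r$ the number of $\Q$-irreducible factors of $f_v$, so a large stabilizer forces $f_v$ to be reducible. Splitting by whether $a_{11}=b_{11}=0$ and invoking Propositions~\ref{hard} and \ref{hard2} is also the right structure, and this is what the paper does.

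However, your argument for bridging the averaged statement of Proposition~\ref{hard} to a pointwise bound for a single fixed $h$ does not work. The integral formula (\ref{avg}) counts lattice points in $E(u,s,\lambda,X)=us\lambda\,G_0\,L\cap\{H<X\}$, where $G_0L$ is a \emph{full-dimensional} region of $V(\R)$ precisely because $G_0$ is an open subset of $G'(\R)$; this is the ``thickening of the cusp'' that makes Davenport's lemma applicable to the thin cuspidal tubes. For a single fixed $h$, the set $hL^{(m,\tau)}$ is only $n$-dimensional, so $us\lambda\,hL^{(m,\tau)}$ is a lower-dimensional set with zero volume and generically no integer points, and the estimate $\Vol(\BB)=O(\prod_{\gamma\in U\setminus U_0}w(\gamma))$ has no content. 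There is no way to simply ``run the same integration for a single $h$'': the averaging is what converts a union over $(u,s,\lambda,\theta)$ into a measure-theoretic quantity tractable by geometry of numbers. The correct bridge, which is also the one implicit in the paper, is that having large $G(\Q)$-stabilizer is a $G(\Z)$-invariant condition, and the multiset $\FF hL^{(m,\tau)}$ represents each $G(\Z)$-orbit $[x]$ exactly $\sigma(x)=\#\Stab_{G(\R)}(x)/\#\Stab_{G(\Z)}(x)$ times, independently of $h$. Hence the total (weighted) count of large-stabilizer lattice points of height $<X$ in $\FF hL^{(m,\tau)}$ is the same for every $h\in G_0$, and therefore equals its average over $h\in G_0$, which is $o(X^{n+1})$ by Propositions~\ref{hard} and \ref{hard2}.

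One additional small slip: Lemma~\ref{lem1} with $k=1$ requires the entire top-left $1\times(n-1)$ blocks of $A$ and $B$ to vanish (i.e.\ $a_{1j}=b_{1j}=0$ for $j\le n-1$), not merely $a_{11}=b_{11}=0$, so the claim that $a_{11}=b_{11}=0$ already forces $\Delta(f_v)=0$ is false. This error is harmless to your overall plan, since Proposition~\ref{hard} bounds all elements with $a_{11}=b_{11}=0$ regardless of reducibility, but the stated justification should be dropped.
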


\subsection{The main term}

Fix again $m,\tau$ and let $L=L^{(m,\tau)}$.  The work of the previous subsection shows that, in order to obtain Theorem~\ref{thmcount}, it suffices to count those integral elements $(A,B)\in \FF h L$ of bounded height for which $a_{11}\neq 0$ or $b_{11}\neq 0$, as $h$ ranges over $G_0$.

Let $\RR_X(hL)$ denote the region $\FF h L\cap \{(A,B)\in
V(\R):H(A,B)<X\}$.  Then we have the following result counting the
number of integral points in $\RR_X(hL)$, on average, satisfying
$a_{11}\neq 0$ or $b_{11}\neq 0$:

\begin{proposition}\label{nonzerob11}
  Let $h$ take a random value in $G_0$ uniformly with
  respect to the Haar measure $dg$.  Then the expected
  number of elements $(A,B)\in \FF hL\cap V(\Z)$ such that
  $|H(A,B)|< X$ and $a_{11}\neq 0$ or $b_{11}\neq0$ is $\Vol(\RR_X(L))
  + O(X^{n+1-{1}/{n}})$.
\end{proposition}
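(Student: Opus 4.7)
The plan is to apply the averaging identity~(\ref{avg}) to the set $S=\{(A,B)\in V(\Z):a_{11}\neq 0\text{ or }b_{11}\neq 0\}$. For each fixed $(u,s,\lambda)$ I will estimate the number of lattice points of $E(u,s,\lambda,X)\cap S$ via Davenport's Proposition~\ref{davlem}, producing a main term equal to the volume of the region and an error controlled by the largest coordinate-projection volume.

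The key structural observation is that $a_{11}$ and $b_{11}$ carry the strictly minimum weight among all coordinates $u\in U$: from (\ref{wtformula}),
\[
w(a_{11})=w(b_{11})=\lambda\prod_{k=1}^{n-1}s_k^{2k-2n},
\]
and for every other $u\in U$ the weight $w(u)$ is obtained by replacing one or more factors $s_k^{2k-2n}$ by $s_k^{2k-n}$ or $s_k^{2k}$, which have strictly larger exponents. Since on $E(u,s,\lambda,X)$ every coordinate $u$ satisfies $|u|\leq C\,w(u)$ for an absolute constant $C$, the requirement $|a_{11}|\geq 1$ or $|b_{11}|\geq 1$ forces $w(a_{11})\gg 1$, which bounds the torus parameters $s_k$ away from the deepest cusp. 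This is precisely what distinguishes the contribution of $S$ from the contribution estimated in Proposition~\ref{hard}.

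For the main term, Davenport's proposition gives that the integer-point count in $E(u,s,\lambda,X)\cap S$ equals $\Vol(E(u,s,\lambda,X))+O(\max\{\Vol(\bar E),1\})$, where the subset on which $a_{11}=b_{11}=0$ contributes negligibly to the volume once $w(a_{11})\gg 1$. Substituting the volume term into~(\ref{avg}) and using invariance of the Haar measure, the averaging unfolds to
\[
\frac{1}{C_{G_0}^{(m,\tau)}}\int_{G_0}\Vol(\RR_X(hL))\,dh=\Vol(\RR_X(L)),
\]
since $\Vol(\RR_X(hL))$ is independent of $h$ by translation-invariance of Lebesgue measure on $V(\R)$ combined with the scaling of $dg$.

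For the error, because $a_{11}$ (equivalently $b_{11}$) has the smallest weight, the largest projection volume satisfies
\[
\Vol(\bar E)\ll \frac{\Vol(E)}{w(a_{11})}\ll \prod_{u\in U\setminus\{a_{11}\}}w(u).
\]
Inserting this into~(\ref{avg}) and integrating against the Haar-measure Jacobian $\prod_k s_k^{-nk(n-k)}$ over the region $w(a_{11})\gg 1$ and $\lambda\leq X^{1/n}$, a direct computation analogous to (but strictly simpler than) the torus-integration in the proof of Proposition~\ref{hard} shows that all exponents of the $s_k$ are nonpositive, so the $s$-integrals converge absolutely. The remaining $\lambda$-integral of $\lambda^{n(n+1)-1}$ against $d^\times\lambda$ from $c'$ to $X^{1/n}$ then contributes $X^{(n(n+1)-1)/n}=X^{n+1-1/n}$, yielding the claimed error. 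The principal obstacle is precisely this torus-integral bookkeeping: verifying that dividing by $w(a_{11})$ cancels the deepest cusp direction and leaves nonpositive $s_k$-exponents, so that the gain of $X^{-1/n}$ corresponds exactly to projecting away one (optimally chosen) coordinate of dimension, as promised by Davenport's proposition.
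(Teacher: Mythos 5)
Your proposal follows essentially the same route as the paper's proof: you apply the averaging identity~(\ref{avg}), observe that $a_{11}$ and $b_{11}$ carry the minimal weight so that an integer point with $|a_{11}|\geq 1$ or $|b_{11}|\geq 1$ forces $w(a_{11})\gg 1$, invoke Davenport's Proposition~\ref{davlem} to get a main term $\Vol(E(u,s,\lambda,X))$ plus a projection error bounded by $\Vol(E)/w(a_{11})$, unfold the main term to $\Vol(\RR_X(L))$, and integrate the error to obtain $O(X^{n+1-1/n})$. One point worth making cleaner: Davenport counts lattice points in a \emph{region}, so one should first replace the condition $a_{11}\neq 0$ on lattice points by the region $\BB = E\cap\{|a_{11}|\geq 1\}$ and then write $\Vol(\BB) = \Vol(E) - \Vol(E\setminus\BB)$, absorbing the second term into the error via the bound $\Vol(E\setminus\BB)\ll\prod_{u\neq a_{11}}w(u)$; the paper does exactly this, and your phrase ``the subset on which $a_{11}=b_{11}=0$ contributes negligibly to the volume'' glosses over that bookkeeping but is pointing at the right fact. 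Otherwise the argument matches the paper's.
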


\begin{proof}
Following the proof of Proposition~\ref{hard}, let $V^{(m,\tau)}(\!\varnothing\!)$ denote the subset of $V(\R)$ such that $a_{11}\neq0$.  
We wish to show that
\begin{equation}\label{toprove2}
N
(V^{(m,\tau)}(\!\varnothing\!);X)=\Vol(\RR_X(L)) + O(X^{n+1-{1}/{n}}).
\end{equation}
We have
\begin{equation}\label{translate2}
  N
  (V^{(m,\tau)}(\!\varnothing\!);X)=\frac{1}{C^{(m,\tau)}_{G_0}}\int_{\lambda=c'}^{X^{1/n}} \!\!\!
\int_{s_1,\ldots,s_{n-1}=c}^\infty
\int_{u\in    N'(s)} 
  \sigma(V(\!\varnothing\!))
  \prod_{k=1}^n s_k^{-nk(n-k)}
  \cdot du\,d^\times\! s \,d^\times\!\lambda,
\end{equation}
where $\sigma(V(\!\varnothing\!))$ denotes the number of integer points
in the region $E(u,s,\lambda,X)$ satisfying $|a_{11}|\geq 1$.
Evidently, the number of integer points in $E(u,s,\lambda,X)$
with $|a_{11}|\geq 1$
can be nonzero only if we have
\begin{equation}\label{condt2}
C{w(a_{11})}=C\cdot\frac{\lambda}
{{\prod_{k=1}^{n-1} s_k^{2n-2k}}}
\geq 1.
\end{equation}
Therefore, if the region $\BB=\{(A,B)\in
E(u,s,\lambda,X):|a_{11}|\geq 1\}$
 contains an integer point, then (\ref{condt2}) and
Proposition~\ref{davlem} imply that the number of integer points in $\BB$
is $\Vol(\BB)+O(\Vol(\BB)/w(a_{11}))$, since all
smaller-dimensional projections of $u^{-1}\BB$ are clearly bounded by
a constant times the projection of $\BB$ onto the hyperplane $a_{11}=0$ (since
$a_{11}$ has minimal weight).

Therefore, since $\BB=E(u,s,\lambda,X)-\bigl(E(u,s,\lambda,X)-\BB\bigr)$, 
we may write
\begin{eqnarray}\label{bigint}\nonumber
\!\!N
(V^{(m,\tau)}(\!\varnothing\!);X) &\!\!\!\!\!\!\!=\!\!\!\!\!\!& \!\frac1{C^{(m,\tau)}_{G_0}}
\!\!\int_{\lambda=c'}^{X^{1/n}}\!\!\!\!\!
\int_{s_1,\ldots,s_{n-1}=c}^{\infty} \int_{u\in N'(s)}\!\!
\Bigl(\Vol\bigl(E(u,s,\lambda,X)\bigr)\!-\!\Vol\bigl(E(u,s,\lambda,X)\!-\!\BB\bigr) 
 \\[.085in] & & \,\,\,\,\,\,\,\,\,\,
\,\,\,\,\,\,\,+O(\max\{\lambda^{n(n+1)-1}
{\prod_{k=1}^{n-1} s_k^{2k-2n}},
1\})
\Bigr)   
\, \prod_{k=1}^n s_k^{-nk(n-k)}\cdot du\,
d^\times s\, d^\times \lambda.
\end{eqnarray}
The integral of the first term in (\ref{bigint}) is $\int_{h\in G_0}
\Vol(\RR_X(hL))dh$.
Since $\Vol(\RR_X(hL))$ does not
depend on the choice of $h\in G_0$,
the latter integral is simply $C^{(m,\tau)}_{G_0}\cdot \Vol(\RR_X(L))$.

To estimate the integral of the second term in (\ref{bigint}), let $\BB'=
E(u,s,t,X)-\BB$, and for each $|a_{11}|< 1$, let $\BB'(a_{11})$ be
the subset of all elements $B\in\BB'$ with the given value of
$a_{11}$.  Then the $(n(n+1)-1)$-dimensional volume of $\BB'(a_{11})$ is at most 
$O\Bigl(\prod_{u\in U\setminus\{a_{11}\}}w(b)\Bigr)$, and so we
have the estimate 
\[\Vol(\BB') \ll \int_{-1}^1 \prod_{u\in
  U\setminus\{a_{11}\}}w(u)
\,\,da_{11} = O\Bigl(\prod_{u\in U\setminus\{a_{11}\}}w(u)\Bigr).\]
The second term of the integrand in (\ref{bigint}) can thus be
absorbed into the third term.
 
Finally, one easily computes the
integral of the third term in (\ref{bigint}) to be
$O(X^{n+1-1/n})$.  We thus obtain
\begin{equation}\label{obtain}
N
(V^{(m,\tau)}(\!\varnothing\!);X) = 
\Vol(\RR_X(L)) 
+ O(X^{n+1-1/n}).
\end{equation}
As the identical argument applies when $b_{11}\neq 0$, this completes the proof.
\end{proof}

Propositions~\ref{hard}, \ref{hard2}, \ref{gzbigstab}, and \ref{nonzerob11} now yield the first assertion of Theorem~\ref{thmcount}. To obtain the second assertion, we must compute 
the volume ${\rm Vol}(\mathcal R_X(L))$.

\subsection{Computation of the volume}\label{secvol}

Fix again $m,\tau$.  In this subsection, we describe how to compute
the volume of $\mathcal R_X(L^{(m,\tau)})$.  

To this end, let $R^{(m,\tau)}:=\Lambda L^{(m,\tau)}$.
For each $(f_0,\ldots,f_{n})\in \R^{n+1}$, the set
$R^{(m,\tau)}$ contains at most one point
$p^{(m,\tau)}(f_0,\ldots,f_{n})$ having invariants
$f_0,\ldots,f_{n}$. Let $R^{(m,\tau)}(X)$ denote the set of all those
points in $R^{(m,\tau)}$ having height less than $X$. Then
$\Vol(\mathcal R_X(L^{(m,\tau)}))=
\Vol(\FF_1
\cdot
R^{(m,\tau)}(X))$, where 
$\FF_1$ denotes the fundamental domain
$N'A'K$ for the action of 
$G(\Z)$ on $G(\R)$ 
(here $N'$, $A'$,
and $K$ are as defined in the paragraph before (\ref{nak})).

The set $R^{(m,\tau)}$ is in canonical one-to-one correspondence with the
set $\{(f_0,\ldots,f_n)\in I(m)\}$, where $I(m)$ again denotes
the component of the space of binary $n$-ic forms in $\R^{n+1}$ having
nonzero discriminant and $2m$ real roots.
 There is thus a
natural measure on each of these sets $R^{(m,\tau)}$, given by
$dr=df_0\cdots df_{n}$.  
Let $\omega$ be a differential which generates the rank
$1$ module of top-degree differentials of 
$G$ over $\Z$.
We begin with the following key proposition, which describes how one
can change measure from $dv$ on $V$ to $\omega(g)\,dr$ on $G\times R$:
\begin{proposition}\label{genjac}
  Let $K$ be $\C$, $\R$, or $\Z_p$ for any prime $p$. Let $dv$ be the
  standard additive measure on $V(K)$. Let $R$ be an open subset of
  $K^{n+1}$ and let $s:R\to V(K)$ be a continuous function such that
  the invariants of $s(f_0,\ldots,f_{n})$ are given by
  $f_0,\ldots,f_{n}$.  Then there exists a rational nonzero
  constant~$\mathcal J$ $($independent of $K,$ $R,$ and $s)$ such
  that, \,for any measurable function $\phi$ on $V(K)$, we have
  \begin{equation}\label{eqjacpgl2}
\int_{v \in G(K)\cdot
  s(R)}\phi(v)dv\,=\,|\mathcal J|\int_{R}\int_{G(K)}
\phi(g\cdot s(f_0,\ldots,f_n))\,\omega(g) \,dr
  \end{equation}
where we regard $G(K)\cdot s(R)$ as a
multiset.
\end{proposition}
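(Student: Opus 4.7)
The plan is to prove (\ref{eqjacpgl2}) as a Jacobian change-of-variables formula. The dimensions balance: $\dim G + \dim R = (n^2-1) + (n+1) = n(n+1) = \dim V$, so $\psi : G \times R \to V$ defined by $\psi(g,f) = g\cdot s(f)$ is a map between spaces of the same dimension. The crux is to show that its Jacobian, computed with respect to $\omega(g)\,dr$ on the source and $dv$ on the target, is a nonzero rational constant $\mathcal{J}$ independent of the field $K$.

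First I would reduce to a pointwise computation and show that the Jacobian $J(g,f)$ depends only on $f$. The form $\omega$ is left-invariant by construction. The additive measure $dv$ on $V(K)$ is $G$-invariant under the action $(A,B)\mapsto(gAg^t,gBg^t)$: this action has Jacobian $\det(g)^{2(n+1)}$, which equals $1$ for any $g\in\SL_n^\pm$. Hence left-translating in the $G$-factor preserves both source and target measures, so $J(g,f) = J(1,f) =: J(f)$.

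Second, I would pin down $J(f)$ by a homogeneity argument. Since everything is algebraic, $J(f)$ is a rational function of $f$. The scaling $(A,B)\mapsto(\mu A,\mu B)$ on $V$ multiplies each invariant $f_i$ by $\mu^n$, and the section $s$ can be chosen $\Gm$-equivariant (after absorbing an appropriate scalar into the $G$-component). Under this scaling, $dv$ scales by $\mu^{n(n+1)}$ and $dr = df_0 \cdots df_n$ scales by $\mu^{n(n+1)}$ as well, so $J(f)$ is homogeneous of degree zero. Being a rational function on the complexified $R$ that is homogeneous of degree zero and regular on an irreducible open set, $J$ is a single constant $\mathcal{J}$, and $\mathcal{J}\in\Q$ because $\psi$ is defined over $\Q$.

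Third, I would verify $\mathcal{J}\neq 0$. By Theorem~\ref{woodsymfield}, for $f$ with $\Delta(f)\neq 0$ the stabilizer of $s(f)$ in $G(\bar K)$ is the finite group $K_f^\times[2]$, so the orbit map $G\to G\cdot s(f)$ has finite fibers and its differential at the identity is of full rank $n^2-1$; combined with the injectivity of $ds$ (since the $n+1$ invariants are algebraically independent), $d\psi$ is an isomorphism at a generic point. Field-independence of $\mathcal{J}$ is then automatic, as it is computed by a single polynomial identity in characteristic $0$, which takes the same value over $\C$, $\R$, and $\Z_p$.

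Once $J \equiv \mathcal{J}$ is established, the integral identity (\ref{eqjacpgl2}) follows from the standard multi-variable change-of-variables formula, with the multiset convention on $G(K)\cdot s(R)$ accounting for the finite generic fibers of $\psi$ exactly as in prior work such as~\cite{BS}. The main obstacle is the homogeneity step: one must verify that $s$ (or some refinement of it) is genuinely $\Gm$-equivariant, so that the scaling argument produces true constancy of $J(f)$ and not merely quasi-homogeneity. This in turn reduces to the fact that, up to the $G$-action, the $\Gm$-scaling on $V$ coincides with a uniform scaling on the coordinates of $R$.
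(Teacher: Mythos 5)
Your overall strategy matches what the paper invokes by reference to \cite[Prop.~3.11]{BS}: reduce to a pointwise Jacobian $J(f)$ of $(g,f)\mapsto g\cdot s(f)$ using left-invariance of $\omega$ and $G$-invariance of $dv$, then exploit that $\sum_i \deg f_i = n(n+1) = \dim V$ to get degree-zero homogeneity. But the constancy step as you wrote it has a genuine gap. A rational function on $\C^{n+1}$ that is homogeneous of degree zero and regular on a nonempty open set is \emph{not} necessarily constant --- $f_0/f_1$ is a counterexample. Degree-zero homogeneity only tells you $J$ is constant along rays through the origin; when there is more than one invariant (here $n+1\geq 3$) this is far from constancy. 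The missing ingredient is this: $J(f)$ is independent of the choice of local section, so it is a \emph{globally defined} rational function on the set $\{\Delta(f)\neq 0\}$ which is both regular and \emph{nonvanishing} there (regular since a good section exists locally near every such $f$; nonvanishing by your rank argument). Since the discriminant $\Delta$ of a binary $n$-ic is an irreducible polynomial, the only invertible regular functions on $\{\Delta\neq 0\}$ are $c\cdot\Delta^k$ with $c\in\Q^\times$, $k\in\Z$. Now degree-zero homogeneity, together with $\deg\Delta = 2(n-1)>0$, forces $k=0$, and $J\equiv c$.

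A secondary issue: at the end you identify the "main obstacle" as showing $s$ can be taken $\Gm$-equivariant, but section-independence of $J$ (which you have already established) makes the equivariance of any particular $s$ irrelevant --- you may freely compare the section near $f$ with the rescaled section $\mu\cdot s(f)$ near $\mu^n f$, and $G$-invariance of the measures absorbs the discrepancy. The real obstacle is the one above: passing from ray-constancy to constancy requires the nonvanishing-plus-irreducibility argument, not an equivariant choice of $s$. Your proof of $\mathcal{J}\neq 0$ and the field-independence claim are fine, though for the former you should also note that $\mathrm{image}(ds)$ and the tangent space to the orbit are transversal because $d\pi\circ ds=\mathrm{id}$ while the orbit tangent is $\ker d\pi$.
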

The proof of Proposition~\ref{genjac} is identical to that of
\cite[Prop.~3.11]{BS} (where we use the important fact that the
dimension $n(n+1)$ of $V$ is equal to the sum of the degrees of the
invariants $f_0,\ldots,f_{n}$ for the action of $G$ on $V$).

Proposition \ref{genjac} may now be used
to give a convenient expression for the volume of the multiset~$\mathcal R_X(L^{(m,\tau)})$:
\begin{eqnarray}\nonumber
\!\!\!\!\int_{\mathcal R_X(L^{(m,\tau)})}\!\!\!\!\!dv=\int_{\FF_1\cdot R^{(m,\tau)}(X)}\!\!\!\!\!dv&=&
|\mathcal J|\cdot\int_{R^{(m,\tau)}(X)}\int_{\FF_1}dg\,dr \\ \nonumber &=&|\mathcal J|\cdot \Vol(G(\Z)\backslash G(\R))\cdot \int_{R^{(m,\tau)}(X)}dr \\ \label{volexp}&=& |\mathcal J|\cdot 
\zeta(2)\zeta(3)\cdots \zeta(n) \nonumber
\cdot \Vol\bigl(\{f\in I(m)\subset \R^{n+1}:H(f)<X\}\bigr).\\ \label{volformula}
\end{eqnarray}

We now show that $|\mathcal J|= 2^{n}$.  It suffices to evaluate $|\mathcal J_p|$ for each $p$.  Fix a binary $n$-form $\phi$ over $\F_p$ that is irreducible and has leading coefficient 1.  Let $S\subset V(\Z_p)$ be the set of elements whose invariant binary $n$-form reduces to $\phi$ modulo $p$. Then 
\begin{equation}\label{volS1}
\begin{array}{rcl}
    \Vol(S)&=&|\J|_p
\Vol(G(\Z_p))\displaystyle\int_{f\equiv \phi\!\!\!\!\! \pmod{p}}
\Bigl(\displaystyle
\sum_{\textstyle\frac{\{v\in V_{\Z_p}:f_v\equiv \phi\!\!\!\!\!\pmod{p} \}}{G(\Z_p)}}
\frac{1}{\#\Aut_{\Z_p}(v)}
\Bigr)\,dr\\[.5in]
& = & \displaystyle|\J|_p \Vol(G(\Z_p)) \cdot\frac{1}{p^{n+1}}\cdot\frac{|(\Z_{p^n}^{\times}/\Z_{p^n}^{\times2})_{N\equiv 1}|}{\Z_{p^n}^\times[2]}, 
\end{array}
\end{equation}
where $\Aut_{\Z_p}(v)$ denotes the stabilizer of $v$ in $G(\Z_p)$ and $\Z_{p^n}$ denotes the unique local ring of rank $n$ over $\Z_p$ having residue field
$\F_p$.
On the other hand, since $S$ is defined by congruence conditions modulo $p$,
by the results of \S\ref{fporbits}, we have
\begin{equation}\label{volS2}
\Vol(S)=|\SL_n(\F_p)|/p^{n(n+1)}.
\end{equation}
Equating the two expressions for $\Vol(S)$, we obtain
\begin{equation} \label{jvalue}
|\J|_p = \left\{\begin{array}{cl}
1/2^{n}& \mbox{if $p=2$; and} \\
1 & \mbox{otherwise.}
\end{array}\right.
\end{equation}
Therefore, $\J=2^{n}$.

We have proven Theorem~\ref{thmcount}.

\subsection{Congruence conditions}\label{secbqcong}

In this subsection, we prove the following version of Theorem \ref{thmcount} where we
count elements of $V(\Z)$ satisfying some finite set of congruence conditions:

\begin{theorem}\label{cong2}
Suppose $S$ is a subset of $V(\Z)$ defined by 
congruence conditions modulo finitely many prime powers. Then 
\begin{equation}\label{ramanujan}
N(S\cap V^{(m,\tau)};X)
  \;=\; N(V(\Z)^{(m,\tau)};X)\cdot
  \prod_{p} \mu_p(S)+o(X^{n+1}),
\end{equation}
where $\mu_p(S)$ denotes the $p$-adic density of $S$ in $V(\Z)$.
\end{theorem}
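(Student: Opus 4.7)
The plan is to bootstrap Theorem~\ref{thmcount} by decomposing $S$ into finitely many cosets of a sublattice of full rank in $V(\Z)$ and repeating the averaging / geometry-of-numbers argument of Section~\ref{counting} on each coset separately. Let $M$ be a positive integer divisible by each of the finitely many prime powers modulo which $S$ is defined. Then we may write
\begin{equation*}
S \;=\; \bigsqcup_{i=1}^{k} \bigl(v_i + M\cdot V(\Z)\bigr),
\end{equation*}
where $v_1,\ldots,v_k$ are representatives of the residue classes in $V(\Z)/MV(\Z)$ that lie in $S$. By the definition of $p$-adic density, $k/M^{n(n+1)} = \prod_p \mu_p(S)$.

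For each fixed $i$, I would rerun the averaging identity (\ref{avg}), but replace $S$ by the translated sublattice $v_i + MV(\Z)$. The integrand becomes a count of points in $E(u,s,\lambda,X)$ that lie in this translated sublattice. Davenport's estimate (Proposition~\ref{davlem}) applies equally well to lattice points in any translate of $M\cdot \Z^{n(n+1)}$: the number of such points in a bounded semi-algebraic region $\mathcal R$ equals $\Vol(\mathcal R)/M^{n(n+1)} + O(\max\{\Vol(\bar{\mathcal R})/M^{n(n+1)-1},1\})$, with implied constant depending only on the combinatorial complexity of $\mathcal R$ (and hence independent of $i$). Substituting this into the argument of Proposition~\ref{nonzerob11} replaces the main term $\Vol(\mathcal R_X(L^{(m,\tau)}))$ by $\Vol(\mathcal R_X(L^{(m,\tau)}))/M^{n(n+1)}$, while the error terms retain the same shape and are still $O(X^{n+1-1/n})$ (possibly with an $M$-dependent constant, which is harmless as $M$ is fixed). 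The reducibility estimates of Propositions~\ref{hard} and \ref{hard2} and the big-stabilizer estimate of Proposition~\ref{gzbigstab} go through identically on each translated sublattice, with the same $o(X^{n+1})$ and $O_\varepsilon(X^{n+1-1/n+\varepsilon})$ bounds (the arguments only use that one is counting lattice points in a region that is "cut off" by the weight inequalities, and the rescaling by a factor of $M$ only changes implied constants).

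Combining these inputs for each $i$ yields
\begin{equation*}
N\!\left((v_i + MV(\Z))\cap V^{(m,\tau)};X\right) \;=\; \frac{c_{m,\tau}}{M^{n(n+1)}}\,X^{n+1} + o(X^{n+1}).
\end{equation*}
Summing over $i=1,\ldots,k$ and using $k/M^{n(n+1)} = \prod_p \mu_p(S)$, together with Theorem~\ref{thmcount}, gives
\begin{equation*}
N(S\cap V^{(m,\tau)};X) \;=\; \prod_{p}\mu_p(S)\cdot c_{m,\tau}\, X^{n+1} + o(X^{n+1}) \;=\; N(V(\Z)^{(m,\tau)};X)\cdot\prod_{p}\mu_p(S) + o(X^{n+1}),
\end{equation*}
as desired. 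The main point requiring care is verifying that the cuspidal estimates of Proposition~\ref{hard} continue to hold uniformly across the $k$ translates: this is the only place where the argument is genuinely nontrivial, but since those estimates come from Davenport's lemma applied to semi-algebraic regions of controlled complexity, the same derivation applies to the translated lattice $v_i + MV(\Z)$ with error constants depending only on $n$ and $M$. Since we sum a fixed (finite) number of such bounds, the total error remains $o(X^{n+1})$.
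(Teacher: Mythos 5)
Your proposal follows the same approach as the paper: decompose $S$ into $k$ translates of the sublattice $M\cdot V(\Z)$, rerun the averaging and Davenport-lemma estimates on each translate (noting volumes rescale by the appropriate power of $M$), and sum using $k/M^{n(n+1)}=\prod_p\mu_p(S)$. The extra care you take to check uniformity of the cusp estimates across the finitely many cosets is a reasonable elaboration of what the paper leaves implicit.
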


To obtain Theorem~\ref{cong2}, suppose $S$ is defined by congruence conditions modulo some integer~$m$. Then $S$ may be viewed as the union of (say) $k$ translates ${\mathcal L}_1,\ldots,{\mathcal L}_k$ of the lattice $m\cdot V(\Z)
$. For each such lattice translate ${\mathcal L}_j$, we may use formula (\ref{avg}) and the discussion following that formula to compute $N(S;X)$, but where each $d$-dimensional volume is scaled by a factor of $1/m^d$ to reflect the fact that our new lattice has been scaled by a factor of $m$. For a fixed value of $m$, we thus obtain
\begin{equation}\label{lat}
N({\mathcal L}_j\cap V^{(m,\tau)};X) = m^{-n(n+1)} \Vol(R_X(L)) +o(X^{n+1}).
\end{equation}
Summing (\ref{lat}) over $j$, and noting that $km^{-n(n+1)} = \prod_p \mu_p(S)$, yields Theorem~\ref{cong2}.

\subsection{Proof of Proposition~\ref{hard2}}

We may use the results of the previous subsection to prove Proposition~\ref{hard2}.
First, note that if an element $(A,B)\in V(\Z)$ 
is reducible over $\Q$ then it also must be reducible modulo $p$ 
(i.e., its image in $V(\F_p)$ has invariant binary $n$-ic form that is reducible over $\F_p$)
for every $p$.

Let $S^\red$ denote the set of elements in $V(\Z)$ that are reducible over $\Q$, and let $S^\red_p$ denote the set of all elements in $V(\Z)$ that are reducible modulo $p$.  Then $S^\red\subset \cap_p S^\red_p$.  Let $S^\red(Y)=\cap_{p<Y}S^\red_p$, and let us use $V^{(m,\tau)}(\!\varnothing\!)$ to denote the set of all $(A,B)\in V(\Z)^{(m,\tau)}$ such that $a_{11}\neq0$ or $b_{11}\neq 0$.  Then the proof of Theorem~\ref{cong2} (without assuming Proposition~\ref{hard2}!) gives
\begin{equation}\label{SYcount}
N
(S^\red(Y)\cap V^{(m,\tau)}(\!\varnothing\!);X)
  \;\leq\; N
  (V^{(m,\tau)}(\!\varnothing\!);X)\cdot
  \prod_{p<Y} \mu_p(S^\red_p)+o(X^{n+1}).
\end{equation}
To estimate $\mu_p(S^\red_p)$, we recall from \S\ref{fporbits} that the number of elements $(A,B)\in V(\F_p)$ having any given separable invariant binary $n$-ic form $f(x,y)=\det(Ax-By)$ is $\#\SL_n(\F_p)$.  It is an elementary and well-known calculation that the number of separable binary $n$-ic forms $f(x,y)$
over $\F_p$ that are irreducible over $\F_p$ is $p^{n+1}/n+O(p^{n})$, where the implied $O$-constant is independent of $p$.  
It follows that
$$
\mu_p(S^\red_p) \leq 1-\frac{p^{n+1}}{n}\cdot\frac{\SL_n(\F_p)}{p^{n(n+1)}} + O(1/p)=
\frac{n-1}n+O(1/p).
$$
Combining with (\ref{SYcount}), we see that
$$\lim_{X\to\infty}\frac{N
(S^\red\cap V^{(m,\tau)}(\!\varnothing\!);X)}{X^{n+1}}
 \;\ll\;  \prod_{p<Y} \mu_p(S^\red_p) \;\ll \; \prod_{p<Y}\Bigl(\frac{n-1}n+O(1/p)\Bigr).
 $$
 When $Y$ tends to infinity, the product on the right tends to 0, proving Proposition~\ref{hard2}.

\section{Proofs of the main theorems}\label{mainproof}

To prove Theorem 1, 
let $\mu(I(m))$ denote $\Vol(\{f\in I(m)\subset \R^{n+1}:H(f)\leq1/2\})$,
so that $\mu(I(m))$ represents the probability that a random
polynomial with i.i.d.\ coefficients in the unit interval $[-1/2,1/2]$
has $2m$ real roots.  Let $\mu(I_p(m))$ denote $|I_p(m)|/p^{n+1}$, so
that $\mu(I_p(m))$ denotes the probability that a random degree $n$
polynomial over $\F_p$ has $m$ distinct irreducible factors.  Similarly, define
$\mu(I_8(m))$ to be $|I_8(m)|/8^{n+1}$, so  $\mu(I_p(m))$ denotes the probability that a random degree $n$
polynomial over $\Z/8\Z$ has a factorization with $m$ distinct irreducible factors modulo~2.
Finally,
note that if $(x_0,y_0,z_0)$ is an integral solution to $z^2=f(x,y)$,
where $x_0,y_0$ are relatively prime integers, 
then $(-x_0,-y_0,z_0)$ is also a solution, and these two solutions give rise to distinct
integral orbits for 100\% of binary $n$-ic forms $f$ via the
construction of Section~2 (since $-1$ will not be a square in $K_f$ for 100\% of $f$). 
Using the fact that
$$\zeta(2)\zeta(3)\cdots\zeta(n)\prod_p\frac{\#\SL_n(\F_p)}{p^{n^2-1}} = 1,$$
we conclude from Theorems~\ref{thmcount}
and \ref{cong2} and the results of \S\ref{secdensity} that the density of hyperelliptic curves over $\Q$ of
genus $g$ having a rational point is at most
\begin{equation}
\frac{1}2 \cdot 2^{n/2}
\sum_{m=0}^{n/2}\frac{\max\{1,2m\}}{2^{m-1}}
\mu(I(m))\cdot \frac{1}{2^n}
\sum_{m=0}^n
\frac{12}{2^{m-1}}
\mu(I_8(m))\cdot
\prod_p\sum_{m=0}^n\sum_{f\in I_p(m)}\!\!
\min\Bigl\{1,\frac{p+1}{2^{m-1}}\Bigr\}
\mu(I_p(m))
\label{finaldensity} 
\end{equation}
and 
this tends to 0 rapidly as $n\to\infty$.
Indeed, the archimedean factor is $O(2^{n/2})$, while the factor at 2 is $O(2^{-n})$, and this immediately gives that (\ref{finaldensity}) is at most $O(2^{-n/2})$, or $O(2^{-g})$.

We can improve this estimate by noting that even after removing $2^{n/2}$ and $2^{-n}$ from the factors at infinity and 2, respectively, the sums that appear for each prime in (\ref{finaldensity}) individually tend to 0 as $n$ tends to infinity.  
Indeed,  the main results of 
\cite{DPSZ}, which show in particular that the density of real polynomials having fewer than $\log n/\log\log n$ real zeroes is $O(n^{-b+o(1)})$ for some $b>0$, immediately imply that the archimedean sum above is
bounded by $O(n^{-\delta/\log \log n})$ for some $\delta>0$.
Next, we observe that the sum at the 
prime $p$ can be bounded by $O(p/\sqrt{n})$ (provided $p$ is at most a
small fixed power of $n$) using the results of \cite{Car} on the distribution of the number of factors of polynomials over finite fields.  Multiplying this bound
over all primes $p$ less than a small power of $n$ is then enough to
bound the product of the sums in (\ref{finaldensity}) by $O(e^{-\delta_1n^{\delta_2}})$ for some
positive constants $\delta_1$ and $\delta_2$.  We conclude in particular that (\ref{finaldensity}) is $o(2^{-g})$.
(More precisely, it is $O(2^{-g-\delta_1 g^{\delta_2}})$ for some $\delta_1,\delta_2>0$.) It follows that the lower density $\rho_g$ of hyperelliptic curves over $\Q$ of genus $g$, when ordered by height, having no rational points is at least $1-o(2^{-g})$.  We have proven Theorem~\ref{main} and the remarks following it.

\vspace{.1in}
We may refine the estimate (\ref{finaldensity}) further by working over $\Q_p$ rather than $\F_p$, following the methods of \cite{BS}.  Although this does not significantly improve the estimates we have already described above as $g\to\infty$ for Theorem~\ref{main} and Corollary~\ref{hasse}, such a refinement can be useful conceptually and also for understanding individual genera. We sketch the method here.  To carry this out, we first observe that the results of 
\cite{AITII} 
imply that two rational points $P$ and $P'$ on the hyperelliptic curve $C:z^2=f(x,y)$ map to common $\SL_n(\Q)$-orbits in $\Q^2\otimes\Sym_2(\Q^n)$ via the construction of Section 2 if and only if they differ by an element of $2\Jac(C)(\Q)$.  The stabilizer in $\SL_n(\Q)$ of elements in such orbits are closely related to $\Jac(C)[2](\Q)$. 
By assigning local weights to the integral orbits appropriately, as in \cite{BS}, so that each $\Q_p$-rational point on $C$, up to translation in $2\Jac(C)(\Q_p)$, gets counted with a weight of $1/\Jac(C)(\Q_p)[2]$, we may then improve the estimate  (\ref{finaldensity}) to
 \begin{equation}\label{finaldensity2}
\displaystyle\int_{\substack{C=C(f_0,\ldots,f_{n})/\R\\H(C)\leq1/2}}\!\!\!\!
\frac{\#(C(\R)/2\Jac(C)(\R))}
{\#\Jac(C)[2](\R)}df_0\cdots df_{n}\,
\displaystyle{\prod_p
\int_{C=C(f_0,\ldots,f_{n})/\Z_p}\!\!\!\!\!\!
\frac{\#({C(\Q_p)}/{2\Jac(C)(\Q_p)})}
{\#\Jac(C)[2](\Q_p)}df_0\cdots df_{n}};
\end{equation}
here, by ``${C(\Q_\nu)}/{2\Jac(C)(\Q_\nu)}$'', we mean the set $C(\Q_\nu)$ modulo equivalence, where two points of $C(\Q_\nu)$ are called equivalent if they differ by an element of $2\Jac(C)(\Q_\nu)$.
In fact, analogous to the calculations in \cite{BS}, this expression gives an upper bound on the density of hyperelliptic curves of genus $g$ over~$\Q$ having rational points\, among those hyperelliptic curves that are {\it locally soluble}, i.e., locally have a point at every place. (Of course, all other non-locally-soluble hyperelliptic curves automatically have no rational point!).  
Again, expression (\ref{finaldensity}) gives a convenient way to estimate the more precise (\ref{finaldensity2}) in practice, although we will have occasion to use (\ref{finaldensity2}) directly as well, e.g., when analyzing the case of genus one in more detail in the next section.  

\vspace{.1in} Let us turn next to the consequences of the above estimates for the fake 2-Selmer
sets of hyperelliptic curves. For a field $k$ and a hyperelliptic
curve $C:z^2=f(x,y)$ over $k$, let us use $\xi_k$ to denote the map
taking $k$-solutions of $z^2=f(x,y)$ to $(K_f^\times/K_f^{\times2})_{N\equiv f_0}$, as defined
in Sections~2 and 3.  Then when reduced modulo $k^\times$, this yields
the famous ``$x-T$'' map of Cassels~\cite{Cassels2}, which is written as $\mu_k:C(k)\to H(k)$ in \cite[\S2]{BruinStoll}, where $H_k:=(K_f^\times/K_f^{\times2}k^\times)_{N\equiv f_0}$. For a
number field $k$ and a place $\nu$ of $k$, let $\rho_\nu:H_k\to
H_{k_\nu}$ denote the natural map.  Then Bruin and Stoll~\cite{BruinStoll}
define the {\it fake 2-Selmer set} of $C/k$ by
$$\Sel^{(2)}_{\rm fake}(C/k):=\{\alpha\in H_k:\rho_\nu(\alpha)\in \mu_{k_\nu}(C(k_{\nu}))\mbox{ for all
places $\nu$ of $k$}\}.$$
In particular, $\Sel^{(2)}_{\rm fake}(C/k)=\varnothing$ implies that $C$ has no $k$-rational points.

We may ask what is the average size of $\sf$ over all locally soluble hyperelliptic
curves $C$ over $\Q$ of genus $g$, when ordered by height.  We claim
that (\ref{finaldensity}) or (\ref{finaldensity2}) also gives an
upper bound on the (limsup of the) average size of $\sf$.  To
prove this, it suffices to demonstrate that every element of the fake
2-Selmer set $\sf$ of $C:z^2=f(x,y)$ over $\Q$, when viewed as a
union~$U$ of $G(\Q)$-orbits on $V(\Q)$ having invariant form $f$ via
Theorem~\ref{woodsymfield}, has a representative in $V(\Z)$.  The
local conditions that we have imposed on $V(\Z)$ to sieve to those
orbits in $V(\Z)$ that arise locally from local points on~$C$ are then the
same conditions that we would impose to sieve to those orbits in
$V(\Z)$ corresponding to elements of the fake 2-Selmer set of $C$.

We now demonstrate that every element $[\alpha]\in \sf$ for a hyperelliptic curve $C$ has an associated integral point in $V(\Z)$
contained in the union $U$ of $G(\Q)$-orbits on $V(\Q)$ associated to $[\alpha]=\{r\alpha\}_{r\in \Q^\times}$ via Theorem~\ref{woodsymfield}.  To do this, given $\alpha\in(K_f^\times/K_f^{\times2})_{N\equiv f_0}$ such that $[\alpha]\in\sf$, we show that there exists $r\alpha\in (K_f^\times/K_f^{\times2})_{N\equiv f_0}$ with $r\in\Q^\times$ whose associated $G(\Q)$-orbit on $V(\Q)$ has a representative in $V(\Z)$.  

We first accomplish this locally at each finite prime $p$ of $\Q$.
Let $C:z^2=f(x,y)$ be a hyperelliptic curve $C$ where $f$ has
coefficients in $\Z_p$.  Given any $\Q_p$-point on $C$, the set of
$G(\Q_p)$-orbits on $V(\Q_p)$ associated to this rational point
corresponds to a class $[\alpha]\in H_{\Q_p}$, by the arguments at the
end of \S\ref{genfield}.  Similarly, the set of $G(\Z_p)$-orbits on $V(\Z_p)$
associated to this rational point corresponds to a set of pairs
$(I,\alpha)$ via Theorem~\ref{woodsym}, where the set of possible 
$\alpha\in(K_f^\times/K_f^{\times2})_{N\equiv f_0} $ is again closed
under multiplication by elements in $\Z_p^\times$ (by the same
argument at the end of \S\ref{genfield}).  It follows that, given
$\alpha\in(K_f^\times/K_f^{\times2})_{N\equiv f_0}$ in its class
$[\alpha]\in H_{\Q_p}$, either the $G(\Q_p)$-orbit on $V(\Q_p)$
corresponding to $\alpha$ or that corresponding to $p\alpha$ (or both)
will contain a point of $V(\Z_p)$.  (This is because
$\Q_p^\times/\Q_p^{\times2}\Z_p^\times$ is generated by $p$.)

Returning to the global case, let us fix any $\alpha\in
(K_f^\times/K_f^{\times2})_{N\equiv f_0}$ whose class $[\alpha]$ lies
in $\sf$.  Let $v$ be any element in the associated $G(\Q)$-orbit on
$V(\Q)$ having binary invariant form $f$.  Then $v$ is already in
$V(\Z_p)$ for all but finitely many primes $p$.  For the remaining
primes $p$, the $G(\Q_p)$-orbit on $V(\Q_p)$ corresponding to 
$\alpha$ or $p\alpha$ will contain a
point of $V(\Z_p)$.  Let $r$ denote the product of those primes $p$
where an additional factor of $p$ is required for an integral
$\Z_p$-orbit.  Then the $G(\Q_p)$-orbit on $V(\Q_p)$
corresponding to the class $r\alpha$ contains a point of $V(\Z_p)$ for
every prime~$p$.  Therefore, an element $v'\in V(\Q)$ corresponding to the class
$r\alpha\in
(K_f^\times/K_f^{\times2})_{N\equiv f_0}$ is $G(\Q_p)$-equivalent to an element in
$V(\Z_p)$ for all $p$.  Since $G$ has class number 1, we conclude that
there exists a $G(\Q)$-transformation of $v'$ that lies in $V(\Z)$,
and this is the desired conclusion.  We have proven
Theorem~\ref{fake}, i.e., the average size of the fake 2-Selmer set of 
hyperelliptic curves $C$ of genus g over $\Q$, when ordered by height,
is $o(2^{-g})$.  
As explained in the introduction, by the work of Bruin--Stoll~\cite{BruinStoll} and Skorobogatov \cite{Sk} (see also the related works of Stoll~\cite{Stoll2} and Scharaschkin~\cite{Sch}), this also implies
Corollary~\ref{brauer}.

\begin{remark}{\em 
By the sieve method of \cite{geosieve}, it should be possible to prove that (\ref{finaldensity2}) in fact gives the exact average size of the fake 2-Selmer set over {locally soluble} hyperelliptic curves of genus $g$.  In the case $g=1$, we explain how this can be done in the next section.}
 \end{remark}

\section{Examples}                                                                                          \label{examples}
   
   In this section, we work out some cases of small genus to give an idea of what can be obtained for individual genera. 
    In particular, we show that for all $g\geq 0$, a positive proportion of hyperelliptic curves of genus $g$ over $\Q$ have no rational points, and for $g\geq 1$, a positive proportion fail the Hasse principle due to the Brauer--Manin obstruction. 
   
   \subsection{Genus 0}
                                                                                                            
We start by examining more closely the case of genus 0. In this case, for an an odd prime $p$, we note that a binary quadratic form in $x$ and $y$ over $\Z_p$ that is $\SL_2(\Z_p)$-equivalent to one of the shape $ux^2+p(bx+cy)y$ modulo $p$, where $c\not\equiv 0$ (mod $p$) and $u$ is a nonresidue, does not represent any square in $\Z_p$.  The $p$-adic density of such binary quadratic forms over $\Z_p$ is $(p-1)^2/(2p^2)$. Therefore, the estimate (\ref{finaldensity}) in the case of $g=0$ ($n=2$) may 
be improved to 
                                         
\begin{equation}                                                                                          \label{g0}  
\ll \prod_{p>2}\Bigl(1 - \frac{(p-1)^2}{2p^2}\Bigr).
\end{equation}         
This gives an upper bound on the density of hyperelliptic curves (\ref{hypereq}) of genus 0, when ordered by height, that have a rational point. 
                                                                                                                                    
                                                                                                                                    Since the product (\ref{g0}) converges to 0, it follows that 0\% of genus 0 curves $z^2 = ax^2 + b    
x y + c y^2$, when ordered by $\max \{|a|,|b|,|c|\}$, have a rational point. Since genus~0 curves satisfy the     
Hasse principle, all such genus~0 curves without a rational point fail to have a rational point due     
to not having a point locally at some place.Ê                                                               
                                                                                                            
\subsection{Genus 1}                                                                                        
                                                                                                            
The first interesting and more nontrivial case of Theorem~\ref{main} occurs when $g=1$ (i.e., $n=4$). In this case, the Êrepresentation we have been considering is essentially the same as the one considered in \cite{BS3}, although we are using here the action of a different group, $\SL_4^\pm$, instead of a quotient of a form of $\GL_2 \times \SL_4$.  The locally soluble orbits of the representation of $\SL_2(\Q)\times \SL_4(\Q)$ on $V(\Q)=\Q^2\otimes\Sym_2\Q^4$ parametrize elements of the 4-Selmer groups of elliptic curves. The $\SL_2(\Q)$-equivalence class of the invariant binary quartic form $f_v$ of such an element $v\in V(\Q)$ gives a genus 1 hyperelliptic curve $C_v:z^2=f_v(x,y)$ and corresponds to a 2-Selmer element of $E_v=\Jac(C_v)$, namely, the double of the 4-Selmer element of $E_v$ associated to $v$.   If the elliptic curve $E_v$ is expressed in Weierstrass form as $y^3=x^3+A_vx+B_v$, then $A_v$ and $B_v$ give the two independent invariants of this representation of $\SL_2(\Q)\times \SL_4(\Q)$ on $V(\Q)$.  See \cite{CFS} or \cite{BH} for details on these parametrizations.

When only $\SL_4^\pm(\Q)$ (or $\SL_4(\Q)$) is acting on our representation $V(\Q)$, then the coefficients of the invariant binary quartic form, rather than $A_v$ and $B_v$, generate the ring of polynomial invariants.  Hence~the 2-Selmer element of the elliptic curve $E_v$ corresponding to the locally soluble curve $C_v:z^2=f_v(x,y)$, rather than the Jacobian elliptic curve $E_v$, is used to give a complete set of  invariants for this representation.  This is the essential difference between the orbits of the actions of the two groups on~$V(\Q)$.
 
Now the fake 2-Selmer set of a locally soluble genus one hyperelliptic curve $C$ is naturally in bijection with the set of 4-Selmer elements of $\Jac(C)$, modulo the action of the involution on $C$, whose double yields the 2-Selmer element of $\Jac(C)$ associated to $C$. We have shown that the expression (\ref{finaldensity2}) gives an upper bound on the average size of this fake 2-Selmer set as one ranges over all locally soluble genus one hyperelliptic curves $C$ ordered by height.  Using the same sieve methods discussed in \cite{BS3}, we can show that this upper bound is in fact also a lower bound! 

In the case of genus one, the quantity in (\ref{finaldensity2}) is easily seen to equal 1, because the product over all
places of $$\frac{\#({\Jac(C)(\Q_\nu)}/{2\Jac(C)(\Q_\nu)})}
{\#\Jac(C)[2](\Q_\nu)}$$ is equal to 1 (indeed it is equal to $2^{-g}$ for $\nu=\infty$, \,$2^g$ for $\nu=2$, and $1$ for all other primes; see, e.g., \cite[Lemma~12.3]{BG}).  This leads to:

 \begin{theorem}\label{fake2}
The average size of the fake 2-Selmer set of locally soluble genus one curves $z^2=f(x,y)$ over $\Q$, when ordered by height, is $1$.
\end{theorem}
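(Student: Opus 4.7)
The plan is to establish that the expression~(\ref{finaldensity2}) provides both an upper and a lower bound on the average size of the fake 2-Selmer set over locally soluble genus one hyperelliptic curves, and then to evaluate this expression as $1$ in the genus one case. The upper bound has already been derived in Section~\ref{mainproof}: every class $[\alpha]\in\sf$ lifts to an integral $G(\Z)$-orbit on $V(\Z)$ satisfying natural local conditions at every place, and Theorems~\ref{thmcount} and~\ref{cong2}, combined with the local computations of \S\ref{secdensity}, bound the resulting weighted count (with weight $1/\#\Jac(C)(\Q_\nu)[2]$ at each place~$\nu$) by~(\ref{finaldensity2}). To evaluate this integral for $g=1$: for the elliptic curve $E=\Jac(C)$, local Tate duality gives $\#(E(\Q_\nu)/2E(\Q_\nu))/\#E(\Q_\nu)[2] = |2|_\nu^{-1}$, which equals $\frac{1}{2}$ at $\nu=\infty$, $2$ at $\nu=2$, and $1$ at all odd primes, with product $1$ over all places (cf.\ \cite[Lemma~12.3]{BG}).

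The matching lower bound requires a sieve argument in the style of \cite{BS3} and the geometric sieve of \cite{geosieve}. The plan is to show that almost every $G(\Z)$-orbit on $V(\Z)$ of bounded height whose invariant quartic $f$ defines a locally soluble curve $C_f$ in fact arises from an element of the fake 2-Selmer set of $C_f/\Q$; equivalently, the inclusion of the image of $\sf$ in the set of ``everywhere locally eligible'' $G(\Z)$-orbits is asymptotically surjective. Concretely, one truncates the local conditions at primes $p < M$ using Theorem~\ref{cong2}, matches the finite-level count with the contribution to~(\ref{finaldensity2}) from primes below $M$, and then lets $M\to\infty$ with $X$. Combined with the injectivity (up to the $\Q^\times/\Q^{\times 2}$ ambiguity at the end of Section~\ref{mainproof}) of the map from fake 2-Selmer classes to $G(\Z)$-orbits and the local-factor evaluation above, this yields the matching lower bound of~$1$.

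The main obstacle is the uniformity estimate underlying the sieve: one needs that the number of $G(\Z)$-orbits on $V(\Z)$ of height at most $X$ whose invariant form $f$ has discriminant divisible by $p^2$ for some prime $p \geq M$ is $O\bigl(X^{n+1}/M^{\delta}\bigr)$ for some $\delta>0$, uniformly in $M$ and $X$. The cuspidal analysis of Section~\ref{counting} (Propositions~\ref{hard} and~\ref{nonzerob11}) handles the main counting result but does not directly control integral orbits according to $p$-adic divisibility of their discriminants. The required tail estimate is analogous to the ones developed in~\cite{BS} and formalized in~\cite{geosieve}, and should follow from an extension of the geometry-of-numbers arguments of Section~\ref{counting} combined with a bound on the number of integral points on the appropriate thickenings of the discriminant locus; however, the absence of a Kostant section in our representation means that the existing framework has to be adapted with care, and this is where the technical heart of the lower bound lies.
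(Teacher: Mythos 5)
Your proposal follows essentially the same route as the paper: use (\ref{finaldensity2}) as an upper bound via the integral-orbit construction, evaluate each local factor as $|2|_\nu^{-1}$ (so the product over all places is $1$, cf.\ \cite[Lemma~12.3]{BG}), and invoke a sieve in the style of \cite{BS3}/\cite{geosieve} for the matching lower bound---which the paper itself defers to a one-line citation, whereas you sketch it in somewhat more detail and correctly identify the uniformity (tail) estimate as the technical crux. One small imprecision worth noting: the correspondence between elements of $\sf$ and $G(\Q)$-orbits satisfying the local eligibility conditions is exact (local eligibility is a condition on the orbit of $v$ at each place, not merely on the form $f$), so there is no genuine "asymptotic surjectivity" to prove---the sieve is needed only to show that the count of locally eligible integral orbits converges to (\ref{finaldensity2}) rather than being merely bounded above by it, which is the tail estimate you go on to describe.
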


Note that this fact alone is (just barely) {\it not} sufficient to prove that a positive proportion of the genus one curves (\ref{hypereq}) fail the Hasse principle!  
Indeed, a priori it is possible that 100\% of the 2-Selmer sets of locally soluble genus one curves $C:y^2=f(x)$ have size 1, and that all such curves have a rational point.
This would mean that the 4-Selmer groups of the Jacobians of 100\% of these curves~$C$ contain exactly two order 4 elements (switched by the involution of $C$) which form a homogeneous space for the 2-Selmer group of $\Jac(C)$.  In particular, this would mean that the 2-Selmer groups of 100\% of these locally soluble curves $C$ have size exactly 2.

To show that this leads to a contradiction, we use the following trick from \cite{BS2}.  Namely, we first prove:

\begin{theorem}\label{rootnum}
A positive proportion of locally soluble genus one curves $C:z^2=f(x,y)$ have Jacobian $E$ with root number $+1$. 
\end{theorem}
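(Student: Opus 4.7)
The plan is to exhibit a positive proportion of locally soluble curves with $w(\mathrm{Jac}(C))=+1$ by a local twisting argument at a single auxiliary prime, combined with the sieve apparatus already set up in Section~\ref{counting}.  For any locally soluble genus one curve $C:z^2=f(x,y)$ with Jacobian $E$, I first write the root number as
\[
w(E)\;=\;-\prod_{p\mid\Delta(f)}w_p(E),
\]
using $w_\infty(E)=-1$ and $w_p(E)=+1$ at primes of good reduction.  Each local factor $w_p(E)$ is determined by $f$ modulo a bounded power of $p$ (with an explicit bound depending on~$p$ via Rohrlich/Halberstadt), so the condition $w_p(E)=\pm1$ cuts out a union of $p$-adic open sets in $V(\Z_p)$.

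Fix an auxiliary odd prime $p_0$ (say $p_0=3$).  The first key step is a local calculation: I will produce a $p_0$-adic open set $\mathcal U\subset V(\Z_{p_0})$ of binary quartics such that (i) every $f\in\mathcal U$ is locally soluble at $p_0$, (ii) the Jacobian $E_f$ has multiplicative reduction at $p_0$, and (iii) $\mathcal U$ decomposes as $\mathcal U^+\sqcup\mathcal U^-$ into open subsets of positive $p_0$-adic measure with $w_{p_0}(E_f)=+1$ on $\mathcal U^+$ (nonsplit) and $w_{p_0}(E_f)=-1$ on $\mathcal U^-$ (split).  Concretely, one arranges the reduction of $f$ modulo $p_0$ to have exactly one double root over $\overline{\F}_{p_0}$, so that $C$ acquires a node at $p_0$ and $E_f$ has multiplicative reduction; the split-versus-nonsplit dichotomy is then governed by whether a certain quantity built from the next coefficient of $f$ at the double root is a square in $\Z_{p_0}^\times$, a condition of $p_0$-adic measure $\tfrac12$.

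With $\mathcal U^\pm$ in hand, I combine this local input with the sieve results of Section~\ref{counting} and the density framework of \S\ref{secdensity}.  For any choice of local conditions at the remaining places that makes $f$ locally soluble away from $p_0$, the partial product $\varepsilon := -\prod_{p\neq p_0,\infty}w_p(E)\in\{\pm1\}$ is constant on the resulting sub-family, while imposing $f\in\mathcal U^+$ or $f\in\mathcal U^-$ at $p_0$ produces sub-families of positive density (by Theorem~\ref{cong2}) on which $w(E)=\varepsilon$ and $w(E)=-\varepsilon$ respectively.  Summing over local profiles away from $p_0$ yields that the locally soluble curves with $w(E)=+1$ have positive density, proving Theorem~\ref{rootnum}.

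The main obstacle is the local computation sketched in the second paragraph: one must verify that the local solubility constraint at $p_0$ does not accidentally force a single reduction type, and that $w_{p_0}$ genuinely takes both values on positive-$p_0$-adic-measure subsets of the locally soluble locus.  This requires an explicit identification of the Kodaira type of $E_f$ at $p_0$ in terms of the coefficients of $f$.  Once that is in place, the remainder is a routine application of the sieve machinery already developed in \S\ref{counting}--\S\ref{mainproof}, paralleling the corresponding step in \cite{BS2}.
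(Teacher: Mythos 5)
Your route is genuinely different from the paper's. The paper invokes the method of \cite{BS2}: the \emph{global}, height-preserving involution $f\mapsto -f$ sends $\Jac(C_f)$ to its quadratic twist by $-1$, and one shows that for a positive proportion of locally soluble $f$ the curves $\Jac(C_f)$ and $\Jac(C_{-f})$ have opposite root numbers; since $f\mapsto -f$ preserves height, this immediately yields a positive proportion with each sign. You instead sieve locally at an auxiliary prime $p_0$, prescribing $p_0$-adic conditions on $f$ that fix the local root number $w_{p_0}$ while preserving local solubility. This is a legitimate alternative strategy, and it has the virtue of not requiring any relation between $C_f$ and $C_{-f}$.

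However, the passage from your local construction at $p_0$ to a global density conclusion contains a gap that you have misidentified as routine. You write that ``for any choice of local conditions at the remaining places \ldots the partial product $\varepsilon := -\prod_{p\neq p_0,\infty}w_p(E)$ is constant on the resulting sub-family.'' But $\varepsilon(f)$ depends on the local behavior of $f$ at \emph{all} primes of bad reduction of $\Jac(C_f)$ --- a set that grows without bound as $H(f)\to\infty$ --- so $\varepsilon$ is not a function of $f$ modulo any fixed integer. In particular Theorem~\ref{cong2} does not apply directly to the condition $\varepsilon(f)=\pm 1$, and ``summing over local profiles away from $p_0$'' is not a finite operation. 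What your argument actually requires is an equidistribution/independence statement: that the conditional distribution of $\varepsilon(f)$ over the locally soluble family is the same whether one imposes $f\in\mathcal U^+$ or $f\in\mathcal U^-$ at $p_0$, so that the densities of $\{f\in\mathcal U^+:\varepsilon(f)=+1\}$ and $\{f\in\mathcal U^-:\varepsilon(f)=-1\}$ can be compared and one of them shown positive. That is a nontrivial sieve statement in the spirit of \cite{geosieve}, and it --- not the local Kodaira-type computation you flag --- is the real crux of the density step. The paper's involution $f\mapsto -f$ bypasses this issue entirely because it is a single global, measure-preserving bijection, which is precisely why it is the argument of choice.
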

The analogue of this theorem is proven in \cite{BS2} for Weierstrass elliptic curves $y^2=x^3+Ax+B$ ordered by naive height.  Theorem~\ref{rootnum} can be proven by the same method: one shows exactly as in~\cite{BS2} that for a positive proportion of locally soluble genus one curves 
$C_f:z^2=f(x,y)$,
we have that $\Jac(C_f)$ and $\Jac(C_{-f})$ have opposite root numbers

By the  theorem of Dokchitser and Dokchitser~\cite{DD}, if $\Jac(C)$ has root number +1 for a genus one curve $C$, then the 2-Selmer rank of $\Jac(C)$ must be even and so, in particular, the size of the 2-Selmer group of $\Jac(C)$ cannot be 2.  Therefore, a positive proportion of  genus one curves $C:z^2=f(x,y)$ have size of 2-Selmer set not equal to 1.  By Theorem~\ref{fake2},  it follows that a positive proportion of genus one curves $z^2=f(x,y)$, when ordered by height, fail the Hasse principle. 
 
 \begin{theorem}
 A positive proportion of genus one curves $z^2=f(x,y)$ over $\Q$, when ordered by height, fail the Hasse principle $($i.e., they have points everywhere locally but do not have any global rational points$)$. 
 \end{theorem}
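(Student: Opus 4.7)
The plan is to prove this by contradiction, combining Theorem~\ref{fake2} (average fake 2-Selmer set size equals 1) with Theorem~\ref{rootnum} (positive proportion have root number $+1$) and the Dokchitser--Dokchitser parity theorem~\cite{DD}. Since Poonen--Stoll~\cite{PS} give that a positive proportion (in fact at least $80\%$) of all genus one curves in our family are locally soluble, it suffices to establish the conclusion for \emph{locally soluble} curves; and since all non-locally-soluble curves trivially have no rational points, the theorem then follows once we exhibit a positive proportion of locally soluble curves with no global rational point.

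Suppose, for contradiction, that $100\%$ of locally soluble genus one curves $C:z^2=f(x,y)$ (when ordered by height) have a rational point. Then for $100\%$ of such $C$, the class $[C]\in\Sel^{(2)}(\Jac(C))$ is trivial, and consequently the fake 2-Selmer set $\sf$ is nonempty, hence has size at least $1$. Since Theorem~\ref{fake2} says that the average size of $\sf$ over locally soluble genus one curves is exactly $1$, this forces $|\Sel^{(2)}_{\rm fake}(C/\Q)|=1$ for $100\%$ of the locally soluble $C$.

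Next, I would translate the equality $|\sf|=1$ into a statement about $\Sel^{(2)}(\Jac(C))$, following the description recalled in the excerpt: $\sf$ is in bijection with the set of $4$-Selmer elements of $\Jac(C)$ whose double equals $[C]$, modulo the hyperelliptic involution. Under the assumption that $C$ has a rational point, $[C]=0$, so the relevant $4$-Selmer elements form $\ker(\Sel^{(4)}(\Jac(C))\to\Sel^{(2)}(\Jac(C)))$. For $100\%$ of $C$ one has $\Jac(C)[2](\Q)=0$, so the standard exact sequence coming from $0\to E[2]\to E[4]\xrightarrow{2} E[2]\to 0$ identifies this kernel with $\Sel^{(2)}(\Jac(C))$; moreover, the hyperelliptic involution acts as $-1$ on $E[4]$, which is trivial on this 2-torsion kernel. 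Hence $|\sf|=1$ forces $|\Sel^{(2)}(\Jac(C))|=2$ for $100\%$ of these curves. (This is exactly the verbal argument sketched right before Theorem~\ref{rootnum} in the excerpt, and I anticipate this step---making the torsor/involution bookkeeping fully rigorous---to be the main technical obstacle, since one must handle the measure-zero exceptional loci where $\Jac(C)[2](\Q)\neq 0$ and verify the involution/torsor compatibility carefully.)

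Finally, I would invoke Theorem~\ref{rootnum} to extract a positive proportion of locally soluble $C$ for which $\Jac(C)$ has root number $+1$. The Dokchitser--Dokchitser theorem~\cite{DD} then asserts that for such $C$, the $2$-Selmer rank of $\Jac(C)$ is even, so $|\Sel^{(2)}(\Jac(C))|\in\{1,4,16,\dots\}$ and in particular cannot equal $2$. This positive-proportion statement contradicts the conclusion of the previous paragraph, which required $|\Sel^{(2)}(\Jac(C))|=2$ for $100\%$ of locally soluble $C$. The contradiction shows that a positive proportion of locally soluble genus one curves have no rational point, and since they are locally soluble by assumption, they fail the Hasse principle. \qed
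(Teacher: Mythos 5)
Your proposal follows exactly the paper's strategy: contradiction via Theorem~\ref{fake2}, Theorem~\ref{rootnum}, and the Dokchitser--Dokchitser parity theorem. However, your intermediate bookkeeping step---the translation from $|\sf|=1$ to $|\Sel^{(2)}(\Jac(C))|=2$---contains an inconsistency that, if taken at face value, would actually break the argument.

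You argue that (i) the relevant $4$-Selmer elements form $\ker(\Sel^{(4)}\to\Sel^{(2)})$, (ii) this kernel is identified with $\Sel^{(2)}(\Jac(C))$ when $\Jac(C)[2](\Q)=0$, and (iii) the hyperelliptic involution acts as $-1$, which is \emph{trivial} on this $2$-torsion kernel. But if the involution acts trivially and $\sf$ is the kernel modulo the involution, then $|\sf|=|\Sel^{(2)}(\Jac(C))|$, and $|\sf|=1$ would give $|\Sel^{(2)}(\Jac(C))|=1$, not $2$. That conclusion is fatal: $2$-Selmer rank $0$ is even, so root number $+1$ produces no contradiction at all. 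You then assert $|\Sel^{(2)}|=2$ anyway, but your reasoning does not support it. The paper instead asserts that the two order-$4$ elements of $\Sel^{(4)}$ lying above $[C]$ are \emph{switched} by the involution---i.e., the involution acts freely, so that $|\sf|=1$ corresponds to a $2$-element set, yielding $|\Sel^{(2)}|=2$. Reconciling this with the fact that $-1$ is trivial on $2$-torsion requires either a more careful accounting of what ``the involution of $C$'' does to $2$-coverings of $C$ (it is not simply $-1$ on $H^1(\Q,E[4])$; there is a translation ambiguity in identifying $C$ with $E$, and the involution acts on coverings rather than on cohomology classes directly), or a correct accounting of the additional ``fake'' quotient by $\Q^\times$ in $K_f^\times/(K_f^{\times2}\Q^\times)$. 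You were right to flag this step as the main technical obstacle, but as sketched it is internally inconsistent: your stated mechanism (trivial involution) produces the wrong count, and the correct count is exactly what makes the root-number parity contradiction go through. The rest of the argument (Poonen--Stoll local solubility, Theorem~\ref{rootnum}, Dokchitser--Dokchitser) is correctly deployed and matches the paper.
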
                   

\subsection{Genus $\geq2$}                                                                                        

Let us now examine expression (\ref{finaldensity2}) in the case of genus $g\geq 2$. As in the case of genus 1, we note that if we were to replace $C(\Q_\nu)$ by $\Jac(C)(\Q_\nu)$ in the numerators (of the numerators!) in (\ref{finaldensity2}), it would then evaluate to 1.  It follows that (\ref{finaldensity2}) is less than or equal to 1, i.e., the average size of the 2-Selmer set of 
locally soluble
hyperelliptic curves of genus $g$ over $\Q$
is less than or equal to 1 for any genus $\geq 2$.

Now this average value could equal 1 if and only if the archimedean factor in (\ref{finaldensity2}) was equal to $2^{-g}$, the factor at 2 was equal to $2^g$, and all other factors were equal to 1. Thus to show that the average size (\ref{finaldensity2}) of the 2-Selmer set of hyperelliptic curves over genus $g$ is less than 1, it suffices to show that the factor in (\ref{finaldensity2}) at any one place $\nu$ has size less than the corresponding maximum size ($2^{-g}$ for $\nu=\infty$, $2^g$ for $\nu=2$, or 1 otherwise).  This is in fact true already for the infinite place once the genus is at least 2.  Indeed, we may write the factor at infinity in (\ref{finaldensity2}) explicitly as
\begin{equation}\label{2factor}
 \displaystyle\int_{\substack{C=C(f_0,\ldots,f_{n})/\R\\H(C)<1/2}}\!\!\!\!
\frac{\#(C(\R)/2\Jac(C)(\R))}
{\#\Jac(C)[2](\R)}df_0\cdots df_{n} \,=\, 
2^{g}
\sum_{m=0}^{n/2}\frac{\max\{1,2m\}}{2^{m}}
\mu(I(m)).
\end{equation}
Note that for $g=1$, the latter expression is equal to 1, because $\max\{1,2m\}/2^{m}=1$ for $m=0$, 1, and~2, and $\sum_{m=0}^{n/2}\mu(I(m))=1$ by definition.  However, once $g\geq 2$, the scenario $m=3$ occurs with positive probability (i.e., $\mu(I(3))>0$), and this is enough to show that (\ref{2factor}) is strictly less than $2^g$ once $g\geq 2$.  A similar argument shows that the factors at other primes are also less than 1 once $g\geq2$ (and indeed more primes contribute and more significantly as $g$ gets larger).

We conclude:

 \begin{theorem}\label{eachdensity}
For each $g\geq 1$, a positive proportion of curves $z^2=f(x,y)$ of genus $g$ over $\Q$, when ordered by height, fail the Hasse principle $($i.e., they have points everywhere locally but do not have any global rational points$)$ due to the Brauer--Manin obstruction.
 \end{theorem}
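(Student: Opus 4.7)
The case $g=1$ has already been handled in the previous subsection via Theorem~\ref{fake2} combined with the root-number argument (Theorem~\ref{rootnum}) and the Dokchitser--Dokchitser parity result, so the plan here is to deal with the remaining cases $g\geq 2$ by exploiting the observation made in the paragraphs immediately preceding the theorem. Specifically, the expression (\ref{finaldensity2}) is an upper bound on the limsup of the average size of the fake 2-Selmer set over \emph{locally soluble} hyperelliptic curves of genus $g$, and I want to show that for each $g\geq 2$ this upper bound is a constant $c_g<1$.

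The key step is the inequality (\ref{2factor}) combined with the fact that equality between (\ref{finaldensity2}) and $1$ would force simultaneous maximality at every place: the archimedean factor would need to equal $2^{-g}$, the factor at $2$ would need to equal $2^{g}$, and each odd $p$-adic factor would need to equal $1$. But the paragraph before the theorem statement already shows that once $g\geq 2$, the archimedean factor is strictly smaller than its maximum, because the open condition $m=3$ (the form having six real roots) has positive Lebesgue measure $\mu(I(3))>0$ on polynomials of degree $n=2g+2\geq 6$, and the corresponding summand $\max\{1,2\cdot 3\}/2^{3}=6/8$ is strictly smaller than the worst-case $1$. This yields an explicit constant $c_g<1$ bounding the limsup in question.

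From here the argument is standard: since $\#\sf$ is a nonnegative integer, an average at most $c_g<1$ forces a lower density of at least $1-c_g>0$ of locally soluble hyperelliptic curves of genus $g$ to satisfy $\sf=\varnothing$. By the geometric interpretation of the fake 2-Selmer set in~\cite{BruinStoll}, emptiness of $\sf$ implies that $C$ possesses no locally soluble 2-covering, hence in particular no rational point; and by Skorobogatov~\cite{Sk} (using the descent formalism of Colliot-Th\'el\`ene--Sansuc~\cite{CS}), the absence of such a 2-covering is a case of the Brauer--Manin obstruction. Multiplying the lower density $1-c_g$ of curves with empty fake 2-Selmer set by the Poonen--Stoll lower density of locally soluble curves from~\cite{PS} converts this into a positive proportion among \emph{all} hyperelliptic curves of genus $g$, as required.

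The main obstacle I expect is purely bookkeeping: one must check that the upper bound (\ref{finaldensity2}) on the limsup is compatible with sieving to the locally soluble subfamily and that the integer-valued nature of $\#\sf$ really transfers to a lower bound on the density of curves with empty fake 2-Selmer set. Both of these are handled by Theorem~\ref{cong2} (finite-level congruence conditions) together with the standard tail estimate that arises from Proposition~\ref{hard}, since the $p$-adic factors tail off uniformly enough to allow a truncation argument. Once that is in place, the proof is complete for every $g\geq 2$, and combined with the $g=1$ case this gives Theorem~\ref{eachdensity} in full.
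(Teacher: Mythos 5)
Your argument matches the paper's: for $g=1$ you invoke the previous subsection (Theorem~\ref{fake2} plus the root-number/Dokchitser--Dokchitser parity step), and for $g\geq 2$ you observe that $\mu(I(3))>0$ with $\max\{1,6\}/2^3<1$ forces the archimedean factor of (\ref{finaldensity2}) strictly below its extremal value, so the average is some $c_g<1$ and the usual Markov-type argument yields a positive density of locally soluble curves with $\sf=\varnothing$, which by Bruin--Stoll and Skorobogatov is a Brauer--Manin failure of the Hasse principle. The only slight difference is that you spell out the final steps (nonnegativity of $\#\sf$ and multiplication by the Poonen--Stoll local-solubility density) that the paper leaves implicit after establishing (\ref{finaldensity2})$<1$.
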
                   
 It 
 would be interesting to work out explicit positive lower bounds on the proportions occurring in Theorem~\ref{eachdensity} by evaluating (\ref{finaldensity2}) more precisely for various small values of $g$.  A rough estimate shows that the proportion in Theorem~\ref{eachdensity} exceeds 50\% already once $g\geq 2$, and exceeds 99\% once $g\geq 10$.  
 See~\cite[\S10]{BruinStoll} for some remarkable computations that comfortably corroborate these conclusions in the case $g=2$.

\subsection*{Acknowledgments}

I am extremely grateful to Dick Gross for numerous helpful conversations and for his collaboration on~\cite{BG2}, which formed a huge inspiration for this work.  I am also extremely grateful to Wei Ho and Arul~Shankar for their important and related collaborations on \cite{BH} and \cite{BS3}, respectively, which were also inspirational.   Many of the ideas of this paper formed also through very helpful discussions with Nils~Bruin, Jean-Louis Colliot-Th\'el\`ene, Bjorn Poonen, Michael Stoll, Terry Tao, Jerry Wang, and Melanie Wood; it is a pleasure to thank them all. 
 
 The author was supported by a Simons Investigator Grant and NSF grant~DMS-1001828.

\end{document}